\newcommand{\tensor}{\otimes}
\newcommand{\isom}{\cong}
\newcommand{\C}{\mathbb{C}}
\newcommand{\Q}{\mathbb{Q}}
\newcommand{\M}{\mathcal{M}}
\newcommand{\N}{\mathcal{N}}
\newcommand{\D}{\mathcal{D}}
\renewcommand{\H}{\mathcal{H}}
\newcommand{\I}{\mathcal{I}}
\renewcommand{\O}{\mathcal{O}}
\newcommand{\DR}{\mathrm{DR}}
\newcommand{\IC}{\mathcal{IC}}
\newcommand{\codim}{\mathrm{codim}}
\newcommand{\w}{\omega}
\newcommand{\W}{\mathcal{W}}
\newcommand{\gr}{\mathrm{gr}}
\newcommand{\Hom}{\mathcal{H}om}
\newcommand{\DB}{\underline{\Omega}} 
\newcommand{\Sing}{\mathrm{Sing}}
\newcommand{\dual}{\mathbf{D}}
\newcommand{\Supp}{\mathrm{Supp}}
\numberwithin{equation}{section}
\theoremstyle{plain}
\newtheorem{thm}[equation]{Theorem}
\newtheorem{cor}[equation]{Corollary}
\newtheorem{lem}[equation]{Lemma}
\newtheorem{prop}[equation]{Proposition}
\theoremstyle{definition}
\newtheorem{defn}[equation]{Definition}
\theoremstyle{remark}
\newtheorem{rmk}[equation]{Remark}
\newtheorem{ex}[equation]{Example}
\begin{document}

\title[Du Bois complex and extension of forms]{Du Bois complex and extension of forms beyond rational singularities}
\author{Sung Gi Park}
\address{Department of Mathematics, Harvard University, 1 Oxford Street, Cambridge, MA 02138, USA}
\email{sgpark@math.harvard.edu}

\subjclass{14B05, 14F10, 14E30, 32S35}

\date{\today}

\begin{abstract}
We establish a characterization of the Du Bois complex of a reduced pair $(X,Z)$ when $X\smallsetminus Z$ has rational singularities. As an application, when $X$ has normal Du Bois singularities and $Z$ is the locus of non-rational singularities of $X$, holomorphic $p$-forms on the smooth locus of $X$ extend regularly to forms on a resolution of singularities for $p\le\codim_X Z-1$, and to forms with log poles over $Z$ for $p\ge\codim_X Z$. If $X$ is not necessarily Du Bois, then $p$-forms extend regularly for $p\le\codim_X Z-2$. This is a generalization of the theorems of Flenner, Greb-Kebekus-Kov\'acs-Peternell, and Kebekus-Schnell on extending holomorphic (log) forms.

A by-product of our methods is a new proof of the theorem of Koll\'ar-Kov\'acs that log canonical singularities are Du Bois. We also show that the Proj of the log canonical ring of a log canonical pair is Du Bois if this ring is finitely generated. The proofs are based on Saito's theory of mixed Hodge modules.
\end{abstract}

\maketitle

\tableofcontents

\section{Introduction}
\label{sec:intro}

In this paper, we prove new results on the extension problem for holomorphic $p$-forms on a variety that recover and generalize some of the main existing results from the literature. The main idea lies in the delicate use of Saito's theory of mixed Hodge modules in order to enhance our understanding of Du Bois complexes. In this process, we also derive a streamlined proof of the theorem of Koll\'ar-Kov\'acs that log canonical implies Du Bois. We begin with the discussion of our main results on the extension of forms, following the conventions below.

Throughout the paper, a variety is a reduced, but not necessarily irreducible, separated scheme of finite type over $\C$. We call a variety $X$ \textit{embeddable} if $X$ admits a closed embedding into a smooth variety. For example, every variety is locally embeddable and every quasi-projective variety is embeddable. A log resolution of a pair $(X,Z)$ - consisting of a variety $X$ and a closed subvariety $Z$ - is a proper morphism $\mu:(\widetilde X,E)\to (X,Z)$ from a smooth variety $\widetilde X$ birational to $\overline {X\smallsetminus Z}$ (the closure of $X\smallsetminus Z$ in $X$) such that $E=\mu^{-1}(Z)$ (the set-theoretic preimage of $Z$) is a reduced simple normal crossing divisor.

\subsubsection*{\textnormal{\textbf{Extending holomorphic differential forms.}}}

The extension problem for holomorphic forms is a topic of significant importance in recent years. Specifically, for a normal variety $X$, we study the extendability of holomorphic $p$-forms on the smooth locus of $X$ to holomorphic $p$-forms on a desingularization $\widetilde X$, where $\mu:\widetilde X\to X$ is a resolution of singularities. In sheaf-theoretic language, this problem is asking for a condition under which the pushforward $\mu_*\Omega_{\widetilde X}^p$ of the sheaf of holomorphic $p$-forms is reflexive, in which case we say \textit{$p$-forms extend}. Analogously, given a subvariety $Z\subset X$, we say  \textit{$p$-forms extend with log poles over $Z$} if $\mu_*\Omega_{\widetilde X}^p(\log E)$ is reflexive for a log resolution $\mu:(\widetilde X,E)\to (X,Z)$. Notice that the pushforward sheaf $\mu_*\Omega_{\widetilde X}^p$ (or $\mu_*\Omega_{\widetilde X}^p(\log E)$) is independent of the choice of a (log) resolution $\mu$.

The following theorem unifies and generalizes the previous extension theorems of \cites{Flenner88, GK14, GK14b, GKKP11, KS21}.

\begin{thm}
\label{thm: extending forms}
Let $X$ be an irreducible normal variety and $Z\subset \mathrm{Sing}(X)$ be a closed subvariety such that $X\smallsetminus Z$ has rational singularities. Let $\mu:(\widetilde X,E)\to (X,Z)$ be a log resolution of singularities with $E=\mu^{-1}(Z)$. Suppose $X$ has Du Bois singularities away from a subvariety of codimension $m$. Then,
\begin{enumerate}
    \item the sheaf $\mu_*\Omega^p_{\widetilde X}$ is reflexive for all $0\le p\le\min\left\{\mathrm{codim}_XZ-1, m-2\right\}$, and
    \item the sheaf $\mu_*\Omega^p_{\widetilde X}(\log E)$ is reflexive for all $0\le p\le m-2$.\footnote{After the posting of this paper, I became aware of the preprint \cite{Tighe23} by Benjamin Tighe, proving this theorem in a special case when $Z=\mathrm{Sing}(X)$ and $X$ is Du Bois everywhere.}
\end{enumerate}
\end{thm}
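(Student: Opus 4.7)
The plan is to establish reflexivity via the standard $S_2$ criterion, expressing the required local cohomology vanishings as consequences of the paper's characterization of the Du Bois complex of the pair $(X,Z)$ in terms of the log resolution $\mu$, which is available because $X\smallsetminus Z$ has rational singularities.

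\textbf{Step 1 (Reduction to local cohomology).} Since $X$ is normal and both $\mu_*\Omega^p_{\widetilde X}$ and $\mu_*\Omega^p_{\widetilde X}(\log E)$ are torsion-free coherent, reflexivity is equivalent to Serre's condition $S_2$, hence to the vanishing $\mathcal H^i_W=0$ for $i=0,1$ and every closed $W\subset X$ of codimension $\ge 2$.

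\textbf{Step 2 (Logarithmic case).} I would use the paper's characterization to identify the complex $R\mu_*\Omega^p_{\widetilde X}(\log E)$ with a Hodge-theoretic object built from the Du Bois complex of $(X,Z)$. The Du Bois hypothesis---$X$ is Du Bois outside a subvariety of codimension $\ge m$---then forces the cone of its comparison with $\Omega^{[p]}_X$ to be supported in codimension $\ge m$. To convert this into $S_2$ vanishings, I would apply Saito's vanishing and strict support decomposition to the graded pieces of $\DR$ of the intermediate extension Hodge module of $X$, extracting local cohomology estimates for $\mathcal H^i_W$ that become effective precisely when $p\le m-2$, establishing part~(2).

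\textbf{Step 3 (Regular case).} For $\mu_*\Omega^p_{\widetilde X}$ I would use the Poincar\'e residue short exact sequence on $\widetilde X$
\[ 0\to \Omega^p_{\widetilde X}\to \Omega^p_{\widetilde X}(\log E)\to \underline\Omega^{p-1}_E\to 0, \]
with the third term interpreted via the simplicial Du Bois complex of the SNC divisor $E$, and push forward by $\mu_*$. The third term becomes supported on $Z$; combining the characterization of the Du Bois complex of $Z$ with the surjection $E\to Z$ shows that the resulting contribution to $\mathcal H^i_W$ for $i\le 1$ vanishes once $p-1<\codim_X Z$, that is $p\le \codim_X Z-1$. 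Feeding this and the log reflexivity from Step~2 into the long exact sequence for $\mathcal H^*_W$ delivers the $S_2$ vanishings for $\mu_*\Omega^p_{\widetilde X}$ in the range stated in~(1).

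\textbf{Main difficulty.} The crux is Step~2: converting the Du Bois / Hodge-module characterization into sharp $S_2$ vanishings at the threshold $p=m-2$. This hinges on identifying the correct weight and Hodge filtrations on the intermediate extension Hodge module of $X$ and tracking how the failure of the Du Bois property in codimension $m$ propagates through $\gr^F_\bullet \DR$, so that Saito-type vanishing produces the tight codimension-$m$ depth bound rather than something weaker.
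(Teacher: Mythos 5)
Your overall plan—reducing reflexivity to an $S_2$ criterion and feeding in Hodge-module / Du Bois information—matches the spirit of the paper, but there are genuine gaps at the points where you actually try to produce the vanishings.

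The most serious issue is in Step 2, where you claim that the hypothesis that $X$ is Du Bois in codimension $<m$ "forces the cone of its comparison with $\Omega^{[p]}_X$ to be supported in codimension $\ge m$." This is not true for $p>0$: the Du Bois condition is a statement about $\DB^0_X$ only, and it does not imply the $p$-Du Bois condition $\Omega^{[p]}_X\isom\underline\Omega^p_X$ for higher $p$. The paper never uses any such implication. Instead, it exploits the Du Bois hypothesis exclusively at the $p=0$ level, via the quasi-isomorphism $\DB^0_{X,Z}\isom R\mu_*\O_{\widetilde X}(-E)$ of Theorem \ref{thm: Du Bois complex, rational singularities}, to bound $\codim_X\Supp R^i\mu_*\O_{\widetilde X}(-E)$. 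The passage from this $p=0$ information to reflexivity of $\mu_*\Omega^p_{\widetilde X}(\log E)$ for $p\le m-2$ is the heart of the argument, and it is achieved by an induction on dimension using non-characteristic hyperplane slicing (Lemmas \ref{lem: slicing IC for forms}, \ref{lem: slicing IC for log forms}, \ref{lem: initial step induction}), not by a direct Saito-vanishing calculation on $\gr^F_\bullet\DR$.

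Second, the Hodge module you invoke is wrong. The "intermediate extension Hodge module of $X$," i.e. $\IC^H_X$, governs $\mu_*\Omega^p_{\widetilde X}$ (Lemma \ref{lem: pushforward forms via IC}), not the log forms. For $\mu_*\Omega^p_{\widetilde X}(\log E)$ the paper uses the mixed Hodge module $\M_{X,Z}=\H^0\bigl(j_*\IC^H_{X\smallsetminus Z}\bigr)$ and its dual $\N_{X,Z}=\H^0\bigl(j_!\IC^H_{X\smallsetminus Z}\bigr)$, which are not pure, so strict support decomposition and the Decomposition Theorem are not directly available; this is precisely the source of the technical difficulty and the reason the paper develops Lemmas \ref{lem: first HF} and \ref{lem: graded de Rham of first cohomology} to control their graded de Rham complexes. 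Finally, your Step 3 does not close: even granting a Poincar\'e residue triangle relating $\Omega^p_{\widetilde X}$, $\Omega^p_{\widetilde X}(\log E)$ and a complex supported on $E$, having $\mu_*\Omega^p_{\widetilde X}$ sit inside the reflexive sheaf $\mu_*\Omega^p_{\widetilde X}(\log E)$ with cokernel supported on $Z$ does not yield $S_2$ for $\mu_*\Omega^p_{\widetilde X}$; the term $\H^0_W$ of the cokernel feeds into $\H^1_W(\mu_*\Omega^p_{\widetilde X})$ with no reason to vanish. The paper instead proves Theorem \ref{thm: p-forms extend} independently via $\IC^H_X$, and the sharper bound $p\le\codim_X Z-1$ requires the generic vanishing of $R^{\codim_X Z-1}\mu_*\O_{\widetilde X}(-E)$ on the top-dimensional components of $Z$, which again comes out of the $p=0$ Du Bois characterization and not from a residue sequence.
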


In the above theorem, $\mathrm{Sing}(X)$ is the singular locus of $X$ and we follow the convention that the codimension (resp. dimension) of an empty set is infinity $\infty$ (resp. $-\infty$), and $\infty=\infty+1$. In particular, this applies when $Z$ is empty. In addition, notice that $m\ge \codim_XZ$ because rational singularities are Du Bois (see \cites{Kovacs99, Saito00}). 

Here is how this theorem recovers the existing results in the literature:

\begin{itemize}
\item If $Z=\mathrm{Sing}(X)$, then $p$-forms extend when $0\le p\le \codim_X \Sing(X)-2$ which is the theorem of Flenner \cite{Flenner88}.
\item If $X$ is Du Bois in codimension 2, then $1$-forms extend. This implies the result of Graf-Kov\'acs \cites{GK14, GK14b} which assumes that $X$ is Du Bois everywhere. As a consequence, the Lipman-Zariski conjecture (stating that $X$ is smooth if the tangent sheaf is locally free) holds if $X$ is Du Bois in codimension $2$ (see e.g. \cite{GKKP11}*{Theorem 6.1} for the proof that extending of $1$-forms implies the conjecture).
\item If $X$ has rational singularities and $Z=\emptyset$, then $p$-forms extend for all $0\le p\le\dim X$ which is the theorem of Kebekus-Schnell \cite{KS21}.
\item If $X$ has log canonical singularities, then $p$-forms extend with log poles over the non-klt locus $nklt(X)$ for all $0\le p\le\dim X$, which is the theorem of Greb-Kebekus-Kov\'acs-Peternell \cite{GKKP11}*{Theorem 1.5} when the boundary divisor $D=0$. Indeed, we take $Z=nklt(X)$ and apply (2). For details, see the discussion below and Corollary \ref{cor: extension for log canonical}. The case when $D\neq 0$ is addressed in Corollary \ref{cor: GKKP and vanishing cohomology}.
\end{itemize}

We emphasize one of the main improvements in Theorem \ref{thm: extending forms}: $Z$ can be chosen as the complement of the locus of rational singularities, which is generally smaller than $\Sing(X)$ or $nklt(X,D)$ (i.e. the non-klt locus of a pair $(X,D)$ of a normal variety $X$ and a $\Q$-divisor $D$).

Besides the extension results mentioned above, this theorem covers significant new cases. As the simplest example, if a pair $(X,D)$ is log canonical in codimension $2$, then $1$-forms extend; we take $Z=\Sing (X)$. Here, we are implicitly using the theorem of Koll\'ar-Kov\'acs \cite{KK10} that log canonical singularities are Du Bois, which is discussed more thoroughly in the next subsection. Notably, for log canonical pairs, we have the following result, which readily follows from a well-known generalization of the theorem of Elkik \cite{Elkik81} stating that Kawamata log terminal singularities are rational, combined with Theorem \ref{thm: extending forms}.

\begin{cor}
\label{cor: extension for log canonical}
Let $(X,D)$ be a log canonical pair with an effective $\Q$-divisor $D$. Let $Z\subset nklt(X,D)\cap \Sing(X)$ be the complement of the locus of rational singularities. Then $p$-forms extend for $p \le \codim_XZ-1$, and $p$-forms extend with log poles over $Z$ for $p\ge \codim_XZ$.

In particular, if $X$ is log canonical everywhere and Kawamata log terminal away from a point, then $p$-forms extend for all $p\le \dim X-1$.
\end{cor}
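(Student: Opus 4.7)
The plan is to verify the hypotheses of Theorem~\ref{thm: extending forms} and then apply it directly, relying on the Koll\'ar--Kov\'acs theorem (asserted in the paper) and the generalization of Elkik's theorem that Kawamata log terminal singularities are rational.

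First I would check that the set $Z$ is a well-defined closed subvariety satisfying the containment $Z\subset nklt(X,D)\cap\Sing(X)$ stated in the hypothesis. Smooth points are rational, so the non-rational locus is contained in $\Sing(X)$. By the generalized Elkik theorem, if $(X,D)$ is klt at a point $x$, then $X$ has rational singularities at $x$; contrapositively, non-rational points of $X$ lie in $nklt(X,D)$. Therefore the complement of the rational singularities locus is closed in $X$ and sits inside $nklt(X,D)\cap \Sing(X)$, confirming that $Z$ is a legitimate closed subvariety of $\Sing(X)$. By construction, $X\smallsetminus Z$ has rational singularities, so this hypothesis of Theorem~\ref{thm: extending forms} is satisfied.

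Next I would invoke the Koll\'ar--Kov\'acs theorem: since $(X,D)$ is log canonical, $X$ itself has Du Bois singularities everywhere. In the language of Theorem~\ref{thm: extending forms} this means we may take the ``non--Du Bois'' locus to be empty, i.e. $m=\infty$ in that theorem's statement (following the convention $\codim_X\emptyset=\infty$). Substituting into the two conclusions of Theorem~\ref{thm: extending forms} yields that $\mu_*\Omega^p_{\widetilde X}$ is reflexive for $0\le p\le\min\{\codim_XZ-1,\infty\}=\codim_XZ-1$, and that $\mu_*\Omega^p_{\widetilde X}(\log E)$ is reflexive for all $0\le p\le\infty$; in particular for $p\ge\codim_XZ$. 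This is exactly the first assertion.

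For the ``in particular'' statement I would argue as follows. If $X$ is log canonical globally and klt on the complement of a single point $x_0$, then by the generalized Elkik theorem $X$ has rational singularities on $X\smallsetminus\{x_0\}$, so the non-rational locus $Z$ is contained in $\{x_0\}$. Consequently $\codim_XZ\ge \dim X$, and the first part of the corollary gives that $\mu_*\Omega^p_{\widetilde X}$ is reflexive for all $0\le p\le\dim X-1$. The only mildly subtle step is making sure the codimension convention and the Du Bois hypothesis line up correctly; once those are in place the corollary is a direct specialization of Theorem~\ref{thm: extending forms}.
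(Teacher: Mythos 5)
Your proof is correct and follows precisely the route indicated in the paper: Koll\'ar--Kov\'acs gives $X$ Du Bois everywhere (so $m=\infty$ in Theorem~\ref{thm: extending forms}), the generalized Elkik theorem confirms that the non-rational locus $Z$ lies in $nklt(X,D)\cap\Sing(X)$ and is closed, and Theorem~\ref{thm: extending forms} then yields both conclusions directly. The paper gives this corollary only as a one-sentence consequence without spelling out the details, which you have correctly filled in.
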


The proof of Theorem \ref{thm: extending forms} is a consequence of the following two technical results on extending (log) forms, combined with the characterization of Du Bois complexes that we establish in the next subsection.

\begin{thm}
\label{thm: p-forms extend}
In the setting of Theorem \ref{thm: extending forms}, the sheaf $\mu_*\Omega_{\widetilde X}^p$ is reflexive for all $0\le p\le\codim_XZ-2$. Furthermore, if $R^{\codim_X Z-1}\mu_*\O_{\widetilde X}(-E)$ is generically vanishing on the components of $Z$ of largest dimension, then $\mu_*\Omega_{\widetilde X}^p$ is reflexive for all $0\le p\le\codim_XZ-1$.
\end{thm}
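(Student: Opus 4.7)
The strategy is to recast reflexivity of $\mu_*\Omega^p_{\widetilde X}$ as a local cohomology vanishing along $Z$, and to establish that vanishing using the characterization of the Du Bois complex of $(X,Z)$ proved in the next subsection, combined with Saito-type vanishing. First, note that $\mu_*\Omega^p_{\widetilde X}$ is torsion-free and, by the Kebekus--Schnell theorem applied on $X\smallsetminus Z$ (which has rational singularities by hypothesis), it agrees there with the reflexive sheaf $\Omega^{[p]}_{X\smallsetminus Z}$. On the normal variety $X$, reflexivity is then equivalent to the $S_2$ condition along the closed subset $Z$ of codimension $\ge 2$, which reduces to the single statement $\H^1_Z(\mu_*\Omega^p_{\widetilde X}) = 0$.

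To establish this vanishing, I would invoke the Du Bois complex characterization, which, under the rational singularities assumption on $X\smallsetminus Z$, identifies $\underline{\Omega}^p_{X,Z}$ with $R\mu_*\Omega^p_{\widetilde X}(\log E)(-E)$ (or a close variant). Combined with the standard triangle $\underline{\Omega}^p_{X,Z}\to\underline{\Omega}^p_X\to\underline{\Omega}^p_Z\to{+1}$, this relates the cohomology sheaves of $R\mu_*\Omega^p_{\widetilde X}$ near $Z$ to Hodge-theoretic pieces of mixed Hodge modules attached to $X$ and $Z$. Plugging the resulting information into the local cohomology spectral sequence
\[
E_2^{i,j} = \H^i_Z\bigl(R^j\mu_*\Omega^p_{\widetilde X}\bigr) \;\Longrightarrow\; R^{i+j}\mu_* R\Gamma_E\,\Omega^p_{\widetilde X}
\]
(which is available because $\mu^{-1}(Z)=E$ gives $R\Gamma_Z\circ R\mu_* = R\mu_*\circ R\Gamma_E$) reduces the desired vanishing to codimension/support estimates for the higher direct images $R^j\mu_*\Omega^p_{\widetilde X}$, together with vanishing of the abutment in low degrees.

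These support estimates come from Saito vanishing applied to the mixed Hodge modules controlling the Du Bois complexes above: in the range $p\le\codim_X Z-2$ they force $\codim_X\Supp R^j\mu_*\Omega^p_{\widetilde X}\ge j+1$, which kills the offending terms of the spectral sequence and yields the first assertion. For the sharper range $p\le\codim_X Z-1$, the only surviving contribution in the spectral sequence is $R^{\codim_X Z-1}\mu_*\O_{\widetilde X}(-E)$, so the hypothesis that this sheaf is generically zero on the top-dimensional components of $Z$ is designed exactly to eliminate it. The main technical obstacle is the middle step: unwinding the Du Bois complex characterization into a concrete description of the cohomology sheaves of $R\mu_*\Omega^p_{\widetilde X}$ near $Z$, in a form where Saito vanishing gives the necessary support estimates term-by-term.
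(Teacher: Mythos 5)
Your outline captures the correct high-level goal (reduce reflexivity to a local cohomology or $S_2$ condition along $Z$ and then prove that using Hodge-theoretic support estimates), and the initial reduction to $\underline{\H}^1_Z(\mu_*\Omega^p_{\widetilde X})=0$ is sound. But the middle of the argument, which you flag yourself as the ``main technical obstacle,'' is a genuine gap, not just a technicality. First, the quasi-isomorphism $\underline{\Omega}^p_{X,Z}\isom R\mu_*\Omega^p_{\widetilde X}(\log E)(-E)$ (or a ``close variant'') that you want to invoke for higher $p$ is not established in the paper and is not true in the generality you would need it: Theorem~\ref{thm: Du Bois complex, rational singularities} only gives the $p=0$ graded piece, and the proof there depends crucially on cancellations special to $\gr^F_0$. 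Second, the codimension estimate $\codim_X\Supp R^j\mu_*\Omega^p_{\widetilde X}\ge j+1$ that you hope to extract from ``Saito vanishing'' does not come for free from any black-box vanishing theorem; it is precisely the hard content of the theorem. The paper's actual mechanism for producing these estimates is a hyperplane-cutting induction (via Lemma~\ref{lem: slicing IC for forms}, the non-characteristic restriction of $\IC^H_X$), plus a carefully engineered base case (Lemma~\ref{lem: initial step induction}) that trades vanishing of $\H^0,\H^{-1}$ of $\gr^F_0\DR$ for vanishing of $\H^0$ of $\gr^F_{-1}\DR$. None of that appears in your sketch.

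There are two further structural divergences worth noting. The paper does not work with $R^j\mu_*\Omega^p_{\widetilde X}$ at all; by Saito's Decomposition Theorem it replaces $\mu_*\Omega^p_{\widetilde X}$ with $\H^{-n+p}\gr^F_{-p}\DR(\IC^H_X)$ (Lemma~\ref{lem: pushforward forms via IC}), and then proves the $S_2$ criterion through the self-duality of $\IC^H_X$ (Lemma~\ref{lem: S2 criterion} together with \eqref{eqn: duality for IC}); the intersection complex, not the constant Hodge module, is the object whose Hodge filtration is being controlled. For the sharper bound $p\le\codim_X Z-1$, the hypothesis on $R^{\codim_X Z-1}\mu_*\O_{\widetilde X}(-E)$ enters not through a spectral sequence comparison as you suggest, but through the specific mixed Hodge module $\N_{X,Z}=\H^0(j_!\IC^H_{X\smallsetminus Z})$, its short exact sequence $0\to K\to\N_{X,Z}\to\IC^H_X\to 0$, and the identification of $R\mu_*\O_{\widetilde X}(-E)$ with $\gr^F_0\DR(j_!\IC^H_{X\smallsetminus Z})$ from Lemma~\ref{lem: graded de Rham of j_!IC} --- none of which follow from your spectral sequence $\H^i_Z(R^j\mu_*\Omega^p_{\widetilde X})\Rightarrow R^{i+j}\mu_*R\Gamma_E\Omega^p_{\widetilde X}$, whose abutment is not obviously computable. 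To fix your argument you would at minimum need to (i) replace the unproven higher-$p$ Du Bois characterization with the intersection-complex description of $\mu_*\Omega^p$, and (ii) supply the induction on dimension by hyperplane cutting that actually produces the support bounds.
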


\begin{rmk}
\label{rmk: weakly rational case}
This theorem is true, with exactly the same proof, under the weaker assumption that $X\smallsetminus Z$ has weakly rational singularities (see \cite{KS21}*{Definition A.1}), or equivalently
$$
\codim_X\left(\Supp \left(R^{i-2}\mu_*\O_{\widetilde X\smallsetminus E}\right)\right)\ge i \mathrm{\;\;for\;all\;\;} i\ge 3.
$$
This characterization of weakly rational singularities follows from \eqref{eqn: duality for IC}, Lemma \ref{lem: basic IC}, and \cite{HL10}*{Proposition 1.1.6 (ii)}.
\end{rmk}

The second technical result for the proof of Theorem \ref{thm: extending forms} regards the extension of forms with log poles, without singularity conditions on the complement:

\begin{thm}
\label{thm: p-log form extends}
    Let $X$ be an irreducible normal variety of dimension $n$ and $Z$ be a subvariety. Let $\mu:(\widetilde X,E)\to (X,Z)$ be a log resolution with $E=\mu^{-1}(Z)$ and
    $$
    m=\max\left\{k\;|\;\codim_X\left(\Supp \left(R^{i-2}\mu_*\O_{\widetilde X}(-E)\right)\right)\ge i \mathrm{\;\;for\;all\;\;} 3\le i\le k\right\}.
    $$
    Then, $\mu_*\Omega^p_{\widetilde X}(\log E)$ is reflexive for all $0\le p\le m-2$.
\end{thm}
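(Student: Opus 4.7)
The plan is to show that $\mathcal{F} := \mu_*\Omega^p_{\widetilde X}(\log E)$ is reflexive by verifying Serre's condition $S_2$. Since $\mathcal{F}$ is torsion-free on the normal variety $X$, this reduces to the local cohomology vanishing $\mathcal{H}^1_W(\mathcal{F})=0$ for every closed subvariety $W\subset X$ with $\codim_X W\ge 2$.

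The first step is a Grothendieck duality reduction. The perfect pairing of log differentials
\[\Omega^p_{\widetilde X}(\log E)\otimes\Omega^{n-p}_{\widetilde X}(\log E)(-E)\longrightarrow\omega_{\widetilde X},\]
together with the isomorphism $\mu^!\omega_X^\bullet\simeq\omega_{\widetilde X}[n]$, yields
\[R\mu_*\Omega^p_{\widetilde X}(\log E)\;\simeq\;R\Hom_{\O_X}\!\bigl(R\mu_*\Omega^{n-p}_{\widetilde X}(\log E)(-E),\,\omega_X^\bullet\bigr)[-n].\]
Steenbrink's log Grauert-Riemenschneider vanishing confines $R\mu_*\Omega^{n-p}_{\widetilde X}(\log E)(-E)$ to cohomological degrees $[0,p]$. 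Via local duality on $X$, the $S_2$ property of the degree-zero cohomology of the left-hand side is then implied by codimension-of-support estimates of the form
\[\codim_X\Supp R^q\mu_*\Omega^{n-p}_{\widetilde X}(\log E)(-E)\;\ge\;q+2\qquad(1\le q\le p)\]
for each $p$ in the target range $0\le p\le m-2$.

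The crux is to deduce these estimates from the hypothesis, which only supplies the bound for the $r=0$ graded piece $R^q\mu_*\O_{\widetilde X}(-E)$. The plan is to interpret $\bigl\{R\mu_*\Omega^r_{\widetilde X}(\log E)(-E)\bigr\}_{0\le r\le n}$ as the graded pieces of the Hodge filtration on a suitable mixed Hodge module on $X$ associated with the open embedding $j\colon X\smallsetminus Z\hookrightarrow X$. Combining Saito's strict compatibility of the Hodge filtration with proper direct image and the degeneration of the relative Hodge-to-de-Rham spectral sequence, the codimension-of-support bound on the $r=0$ piece should propagate to every $r$ so long as $p\le m-2$. Feeding these bounds back through the duality isomorphism gives the desired vanishing of $\mathcal{H}^1_W(\mathcal{F})$.

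The main obstacle is this propagation step: transferring a codimension-of-support condition from the $r=0$ graded piece of the Hodge filtration to all the others. It is precisely the strictness and degeneration properties of Saito's theory of mixed Hodge modules that are designed to enable this transfer, and controlling the precise range in which the transfer succeeds will be the technical heart of the argument.
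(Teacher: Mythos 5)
There is a genuine gap at the step you correctly identify as the ``technical heart.'' Your framework up to that point is sound and essentially parallels the paper: reflexivity on a normal variety reduces to $S_2$; via Grothendieck duality and the log perfect pairing, $S_2$ of $\mu_*\Omega^p_{\widetilde X}(\log E)$ is governed by codimension-of-support bounds on $R^q\mu_*\Omega^{n-p}_{\widetilde X}(\log E)(-E)$ for $1\le q\le p$ (this is exactly the content of the paper's Lemma~\ref{lem: S2 criterion} together with Proposition~\ref{prop: criterion for extending log forms}); and the family $\{R\mu_*\Omega^r_{\widetilde X}(\log E)(-E)\}_r$ is indeed realized as the graded de Rham pieces of $j_!$ of the intersection-complex Hodge module via Saito's functoriality (Lemma~\ref{lem: graded de Rham of j_!IC} handles the $r=0$ piece; Lemma~\ref{lem: pushforward log forms via IC} is its dual for the $r$-forms).

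However, strictness of the Hodge filtration under proper pushforward and the degeneration of the Hodge-to-de-Rham spectral sequence do \emph{not} transfer a support bound on one graded piece $\gr^F_0$ to another graded piece $\gr^F_{-r}$. Strictness and degeneration only guarantee that each $\gr^F_k$ of the direct image is computed as $Rf_*\gr^F_k$ of the source (Proposition~\ref{prop: commutativity of graded de Rham}); they give no comparison \emph{between} different $k$. The different graded pieces $\gr^F_k\DR(\N)$ of a mixed Hodge module are essentially independent coherent complexes, and there is no general mechanism, purely at the level of filtered strictness, that moves a codimension estimate from one of them to another. So the sentence ``the codimension-of-support bound on the $r=0$ piece should propagate to every $r$'' has no justification, and I do not believe such a direct propagation holds.

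The paper's actual device for bridging this gap is an induction on $\dim X$ via general hyperplane sections. The key technical tool is the slicing short exact sequence (Lemma~\ref{lem: slicing IC for log forms})
$$
0\to N^*_{L/Y}\otimes\gr^F_{p+1}\DR\left(\N_{X_L,Z_L}\right)\to \O_L\otimes\gr^F_{p}\DR\left(\N_{X,Z}\right)\to \gr^F_{p}\DR\left(\N_{X_L,Z_L}\right)[1]\to 0,
$$
which relates the graded piece of $\N_{X,Z}$ to graded pieces on a hyperplane slice, where the dimension has dropped by one. Crucially the hypothesis on $R^\bullet\mu_*\O_{\widetilde X}(-E)$ is preserved under restriction to a general hyperplane by generic base change, so the integer $m$ does not decrease. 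Combined with the perverse-sheaf argument of Lemma~\ref{lem: initial step induction} to handle the residual top-index cases $(p,k)=(n-1,n-1),(n,n-1),(n,n)$ — which are exactly the ones requiring $m\ge n+1$, and therefore the vanishing $\H^i\left(\gr^F_0\DR\left(j_!\IC^H_{X\smallsetminus Z}\right)\right)=0$ for $i=0,-1$ from the hypothesis via Lemma~\ref{lem: graded de Rham of j_!IC} — the induction closes. This slicing/induction machinery is what replaces your unjustified propagation step; you would need to incorporate something of this nature to complete the argument.
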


The proofs of Theorems \ref{thm: p-forms extend} and \ref{thm: p-log form extends} are strongly inspired by the arguments of Kebekus-Schnell \cite{KS21}. The principal addition to these arguments is the use of a mixed Hodge module $\N_{X,Z}:=\H^0(j_!\IC^H_{X\smallsetminus Z})$ and its properties deduced from the proof of Theorem \ref{thm: Du Bois complex, rational singularities} below, which allows one to establish a connection between $R\mu_*\O_{\widetilde X}(-E)$ and $\mu_*\Omega^p_{\widetilde X}(\log E)$. See Section \ref{sec: extending forms} for details.

Lastly, we recover the theorem of Greb-Kebekus-Kov\'acs-Peternell \cite{GKKP11}*{Theorem 1.5}, stating that for a log canonical pair $(X,D)$, $p$-forms extend with log poles over $Z=nklt(X,D)$ for all $0\le p\le \dim X$. It is worth noting that both $X$ and $Z$ are Du Bois \cite{KK10} and $X\setminus Z$ is rational. Therefore, we have the following quasi-isomorphisms
$$
\I_{X,Z}\isom\DB^0_{X,Z}\isom R\mu_*\O_{\widetilde X}(-E)
$$
that follow from Theorem \ref{thm: Du Bois complex, rational singularities} below (see the next subsection for the notation). Combined with Theorem \ref{thm: p-log form extends}, we have

\begin{cor}
\label{cor: GKKP and vanishing cohomology}
Let $(X,D)$ be a log canonical pair with an effective $\Q$-divisor $D$ and
$$
\mu:(\widetilde X,E)\to (X,nklt(X,D))
$$
be a log resolution of singularities with $E=\mu^{-1}(nklt(X,D))$. Then $R^i\mu_*\O_{\widetilde X}(-E)=0$ for all $i>0$ and $\mu_*\Omega^p_{\widetilde X}(\log E)$ is reflexive for all $0\le p\le \dim X$.
\end{cor}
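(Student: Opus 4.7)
The plan is straightforward: reduce the statement to an application of Theorem \ref{thm: Du Bois complex, rational singularities} followed by Theorem \ref{thm: p-log form extends}, exactly along the lines foreshadowed in the paragraph preceding the corollary.

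First, I would check that the hypotheses of Theorem \ref{thm: Du Bois complex, rational singularities} are met for the pair $(X,Z)$ with $Z=nklt(X,D)$. The theorem of Koll\'ar-Kov\'acs \cite{KK10} supplies the key input: both $X$ and the non-klt locus $Z$ have Du Bois singularities. On $X\smallsetminus Z$ the pair $(X,D)$ is Kawamata log terminal, so the generalization of Elkik's theorem \cite{Elkik81} gives that $X\smallsetminus Z$ has rational singularities. This places us exactly in the setting where the characterization
\[
\I_{X,Z}\isom \DB^0_{X,Z}\isom R\mu_*\O_{\widetilde X}(-E)
\]
holds, as the excerpt records.

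Next, I would read off the vanishing $R^i\mu_*\O_{\widetilde X}(-E)=0$ for $i>0$ from this quasi-isomorphism: the leftmost term $\I_{X,Z}$ is a coherent sheaf concentrated in degree $0$, so passing to the $i$-th cohomology sheaf kills $R^i\mu_*\O_{\widetilde X}(-E)$ whenever $i\ge 1$. This settles the first assertion.

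For the reflexivity assertion I would invoke Theorem \ref{thm: p-log form extends}. The vanishing just obtained forces $\Supp\bigl(R^{i-2}\mu_*\O_{\widetilde X}(-E)\bigr)=\emptyset$ for every $i\ge 3$, so under the convention $\codim_X\emptyset=\infty$ the integer $m$ appearing in that theorem equals $\infty$. Consequently $\mu_*\Omega^p_{\widetilde X}(\log E)$ is reflexive for every $p$, in particular throughout the range $0\le p\le\dim X$. There is no real obstruction to carry out; essentially all of the difficulty is packaged into the two theorems being quoted, and the only nontrivial external input is the Koll\'ar-Kov\'acs Du Bois property of log canonical singularities, without which the characterization of $R\mu_*\O_{\widetilde X}(-E)$ would be unavailable.
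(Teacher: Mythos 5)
Your argument matches the paper's: the quasi-isomorphisms $\I_{X,Z}\isom\DB^0_{X,Z}\isom R\mu_*\O_{\widetilde X}(-E)$ are obtained exactly as you describe (Koll\'ar-Kov\'acs for the Du Bois pair property of $(X,nklt(X,D))$, Elkik for rational singularities of the klt locus, then Theorem \ref{thm: Du Bois complex, rational singularities}), and the vanishing plus $m=\infty$ in Theorem \ref{thm: p-log form extends} yields the reflexivity range. This is precisely the route taken in the paragraph preceding the corollary in the paper.
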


\begin{rmk}
Notice that $(X,Z):=(X,nklt(X,D))$ is an example of a log rational pair defined below (see Definition \ref{defn: log rational pair}). It is easy to see that the conclusion of Corollary \ref{cor: GKKP and vanishing cohomology} holds for any log rational pair $(X,Z)$ with $X$ normal and irreducible.
\end{rmk}

\subsubsection*{\textnormal{\textbf{New characterization of Du Bois complexes.}}} 

One of the crucial tools for proving the preceding extension theorems is a characterization of Du Bois complexes of certain pairs, discussed next. The Du Bois complex $\DB^0_X$ of a variety $X$ has gained significant attention due to its role in higher dimensional geometry, particularly in connection with the compactification of the moduli space of stable varieties. Despite its intricate definition, substantial progress has been made in understanding it over the last few decades (see, for example, \cites{Kovacs99, Kovacs00b, Schwede07, KK10, Saito00}). In particular, Saito \cite{Saito00} proved that the Du Bois complex is the graded de Rham complex of a complex of mixed Hodge modules (see Section \ref{sec: Saito MHM}). 

For our purposes, we are interested in the following variant for a pair $(X,Z)$ consisting of a variety $X$ and a closed subvariety $Z$. As defined by Steenbrink \cite{Steenbrink85}, its Du Bois complex $\DB^0_{X,Z}$ is the mapping cone of the natural morphism $\rho:\DB^0_X\to\DB^0_Z$ of Du Bois complexes shifted by $[-1]$. Alternatively, $\DB^0_{X,Z}$ is an object in the bounded derived category of coherent sheaves $D^b_{coh}(X, \O_X)$, sitting in a distinguished triangle:
$$
\DB^0_{X,Z}\rightarrow\DB^0_X\xrightarrow{\rho}\DB^0_Z\xrightarrow{+1}.
$$
Recall that $X$ has Du Bois singularities if the natural morphism $\O_X\to \DB^0_X$ is a quasi-isomorphism and $(X,Z)$ is a \textit{Du Bois pair} if the natural morphism $\I_{X,Z}\to \DB^0_{X,Z}$ is a quasi-isomorphism where $\I_{X,Z}$ is the ideal sheaf of $Z$ in $X$ (see \cite{Kovacs11}*{Definition 3.13}). In particular, $(X,Z)$ is a Du Bois pair if $X$ and $Z$ have Du Bois singularities.

We will study the Du Bois complex of a pair using the theory of mixed Hodge modules. When dealing with complexes of mixed (versus pure) Hodge modules, the powerful tools that have played a crucial role in the recent success of integrating Saito's Hodge module theory in higher dimensional geometry are no longer readily available. Notably, the Decomposition Theorem, a significant property of pure Hodge modules, is inaccessible in this context. To overcome this limitation, we study some key properties of the graded de Rham complex of mixed Hodge modules in Section \ref{sec: graded de Rham} and use them in Sections \ref{sec: rational singularities}, \ref{sec: characterization of Du Bois complexes} to provide a particularly simple characterization of the Du Bois complex of a pair when the complement has rational singularities.

\begin{thm}
\label{thm: Du Bois complex, rational singularities}
    Let $X$ be an embeddable variety and $Z\subset X$ be a closed subvariety such that $X\smallsetminus Z$ has rational singularities. Let $\mu:(\widetilde X,E)\to (X,Z)$ be a log resolution of singularities with $E=\mu^{-1}(Z)$. Then there exist quasi-isomorphisms
    $$
    \DB^0_{X,Z}\isom R\mu_*\O_{\widetilde X}(-E)\isom \gr^F_0\DR\left(j_!\left(\bigoplus_i\IC_{X_i\smallsetminus Z}^H[-\dim X_i]\right)\right)
    $$
    where $j:X\smallsetminus Z\to X$ is the open embedding and $\IC_{X_i}^H$ is the pure polarizable Hodge module associated to the intersection complex of $X_i$ for each irreducible component $X_i$ of $\overline {X\smallsetminus Z}$.
\end{thm}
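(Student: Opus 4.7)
The strategy is to realize each of the three terms as $\gr^F_0\DR$ applied to a complex of mixed Hodge modules on $X$, then to compare them through the log resolution $\mu$. The key inputs are Saito's formula $\DB^0_Y\isom\gr^F_0\DR(\Q^H_Y)$ \cite{Saito00} and the compatibility of $\gr^F_0\DR$ with proper pushforward. Throughout, denote $\iota:Z\hookrightarrow X$, $j_{\widetilde X}:\widetilde X\smallsetminus E\hookrightarrow\widetilde X$, and $\mu^\circ:=\mu|_{\widetilde X\smallsetminus E}:\widetilde X\smallsetminus E\to X\smallsetminus Z$, which is proper because $E=\mu^{-1}(Z)$.

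Applying $\gr^F_0\DR$ to the distinguished triangle $j_!\Q^H_{X\smallsetminus Z}\to\Q^H_X\to\iota_*\Q^H_Z\xrightarrow{+1}$ in $D^b\mathrm{MHM}(X)$ and matching with the defining triangle for $\DB^0_{X,Z}$ yields the identification $\DB^0_{X,Z}\isom\gr^F_0\DR(j_!\Q^H_{X\smallsetminus Z})$. The same reasoning applied to the smooth SNC pair $(\widetilde X,E)$, for which $\DB^0_{\widetilde X,E}\isom\O_{\widetilde X}(-E)$, gives $\gr^F_0\DR(j_{\widetilde X,!}\Q^H_{\widetilde X\smallsetminus E})\isom\O_{\widetilde X}(-E)$. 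Pushing forward along $\mu$ and combining Saito's compatibility $R\mu_*\circ\gr^F_0\DR\isom\gr^F_0\DR\circ R\mu_*$ with the base change $R\mu_*\circ j_{\widetilde X,!}\isom j_!\circ R\mu^\circ_*$ (which follows from $\mu\circ j_{\widetilde X}=j\circ\mu^\circ$ and properness of both $\mu$ and $\mu^\circ$) then produces
$$R\mu_*\O_{\widetilde X}(-E)\isom\gr^F_0\DR\bigl(j_!R\mu^\circ_*\Q^H_{\widetilde X\smallsetminus E}\bigr).$$

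Next, the rational singularities hypothesis enters. Normality of $X\smallsetminus Z$ gives a decomposition $X\smallsetminus Z=\bigsqcup_i(X_i\smallsetminus Z)$, and the decomposition theorem applied componentwise to the proper birational $\mu^\circ$ yields
$$R\mu^\circ_*\Q^H_{\widetilde X\smallsetminus E}\isom\bigoplus_i\IC^H_{X_i\smallsetminus Z}[-\dim X_i]\oplus\mathcal{K}$$
in $D^b\mathrm{MHM}(X\smallsetminus Z)$, with $\mathcal{K}$ a direct sum of shifts of $\IC$ Hodge modules supported on proper subvarieties of $X\smallsetminus Z$. From $R\mu^\circ_*\O_{\widetilde X\smallsetminus E}\isom\O_{X\smallsetminus Z}$ and Saito's identity $\gr^F_0\DR(\IC^H_{X_i\smallsetminus Z})\isom\O_{X_i\smallsetminus Z}[\dim X_i]$ (both using rational singularities), it follows that $\gr^F_0\DR(\mathcal{K})=0$ and that the adjunction unit $\Q^H_{X\smallsetminus Z}\to R\mu^\circ_*\Q^H_{\widetilde X\smallsetminus E}$ is a $\gr^F_0\DR$-quasi-isomorphism on $X\smallsetminus Z$. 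Applying $j_!$ and combining with the previous display gives $\DB^0_{X,Z}\isom R\mu_*\O_{\widetilde X}(-E)$; the third isomorphism reduces to the vanishing $\gr^F_0\DR(j_!\mathcal{K})=0$ on all of $X$.

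The hard part is this last vanishing, since $\gr^F_0\DR$ does not commute with $j_!$: the vanishing on $X\smallsetminus Z$ does not automatically propagate across $Z$. While $\gr^F_0\DR(j_!\mathcal{K})$ is automatically supported on $Z$ by restriction, showing that it is zero requires an explicit analysis of the lowest piece of the Hodge filtration of $j_!$ applied to pure Hodge modules supported on proper subvarieties of $\overline{X\smallsetminus Z}$. One approach is to decompose $\mathcal{K}$ into its constituent $\IC^H(Y_j,L_j)$ summands and prove $\gr^F_0\DR(j_!\IC^H(Y_j,L_j))=0$ for each, by resolving $Y_j$ and invoking Saito's explicit formulas for the Hodge filtration on $j_!$ of a pure Hodge module coming from a smooth ambient variety.
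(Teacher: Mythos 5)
Your architecture up to the last step is sound and broadly parallels the paper's: you correctly identify $\DB^0_{X,Z}\isom\gr^F_0\DR(j_!\Q^H_{X\smallsetminus Z})$, correctly push $\O_{\widetilde X}(-E)$ to $\gr^F_0\DR(j_!R\mu^\circ_*\Q^H_{\widetilde X\smallsetminus E})$ via proper base change and the compatibility of $\gr^F_0\DR$ with proper pushforward, and correctly reduce to showing $\gr^F_0\DR(j_!\mathcal K)=0$. You are also right that this is exactly where the difficulty lies, since $\gr^F_0\DR$ does not commute with $j_!$.

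However, the fix you propose -- decomposing $\mathcal K$ into simple summands $\IC^H(Y_j,L_j)$, resolving each $Y_j$, and invoking ``Saito's explicit formulas for the Hodge filtration on $j_!$'' -- is not carried out and is not what makes the argument close. The mechanism you would need is \emph{not} an explicit formula for $j_!$ of a pure Hodge module; it is the observation, used twice in the paper, that the index of the first nonzero Hodge piece cannot decrease under $j_*$ or $j_!$ (Lemma \ref{lem: first HF}). To apply that lemma directly to $\mathcal K$ and conclude $\gr^F_0\DR(j_!\mathcal K)=0$ you would need $\gr^F_p\DR(\mathcal K)=0$ for \emph{all} $p\le 0$, which fails in general: the ``extra'' summands from the Decomposition Theorem contribute nontrivially to lower Hodge pieces (think of exceptional curves over a rational double point). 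Rational singularities only kill $\gr^F_0\DR(\mathcal K)$, not $\gr^F_{-1}\DR$, etc.

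The paper's resolution is a duality trick you are missing. One does have the one-sided vanishing $\gr^F_p\DR(\mathcal K)=0$ for all $p\ge 0$: the case $p=0$ is rational singularities, and $p>0$ is automatic since $\mathcal K$ is a summand of a pushforward of $\Q^H$ from a smooth variety. Applying the duality isomorphism of Lemma \ref{lem: duality}, this becomes $\gr^F_p\DR(\dual\mathcal K)=0$ for all $p\le 0$, which \emph{is} the hypothesis of Lemma \ref{lem: first HF}. Hence $\gr^F_p\DR(j_*\dual\mathcal K)=0$ for all $p\le 0$; dualizing once more via $\dual j_!=j_*\dual$ and Grothendieck duality gives $\gr^F_0\DR(j_!\mathcal K)=0$. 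This is precisely how the paper proceeds, though it organizes things slightly differently: Lemma \ref{lem: graded de Rham of j_!IC} first proves $R\mu_*\O_{\widetilde X}(-E)\isom\gr^F_0\DR(j_!\bigoplus_i\IC^H_{X_i\smallsetminus Z}[-\dim X_i])$ with \emph{no} rational singularities hypothesis (working at the bottom piece $p=-\dim X_i$, where the extra summand vanishes, and then dualizing), and only afterwards uses the mapping cone of $\Q^H_{X\smallsetminus Z}\to\bigoplus_i\IC^H_{X_i\smallsetminus Z}[-\dim X_i]$ together with the same duality argument to match $\DB^0_{X,Z}$. You should adopt the duality step; without it the final vanishing is a genuine gap.
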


For the third term of the quasi-isomorphisms, see Section \ref{sec: Saito MHM} which contains a discussion of the fundamentals of Saito's theory of mixed Hodge modules and the graded de Rham functor $\gr^F_p\DR$.

The main difficulty in the proof of Theorem \ref{thm: Du Bois complex, rational singularities} comes from the fact that $\mu$ is not an isomorphism over $X\smallsetminus Z$, setting this theorem apart from the characterizations of Du Bois complexes found in the literature (see e.g. \cite{Schwede07}). It is exactly for this reason that the use of mixed Hodge modules becomes necessary.  Note that we do not claim the naturality of the quasi-isomorphism $\DB^0_{X,Z}\isom R\mu_*\O_{\widetilde X}(-E)$ (see Remark \ref{rmk: naturality}).

\subsubsection*{\textnormal{\textbf{Log rational pairs and the proof of log canonical implies Du Bois.}}} 

The techniques used to prove Theorem \ref{thm: Du Bois complex, rational singularities} also lead to novel proofs of the inversion of adjunction for rational and Du Bois singularities, as well as the celebrated theorem of Koll\'ar-Kov\'acs, stating that log canonical singularities are Du Bois. The key concept underlying these proofs is the following:

\begin{defn}
\label{defn: log rational pair}
    A pair $(X,Z)$ is called a \textit{log rational pair} if it is a Du Bois pair and $X\smallsetminus Z$ has rational singularities.
\end{defn}

Using Theorem \ref{thm: Du Bois complex, rational singularities}, log rational pairs have a nice characterization in terms of resolution of singularities. Note that in the result below, the converse statement is an immediate consequence of the results of Kov\'acs \cites{Kovacs00, Kovacs11}.

\begin{cor}
\label{cor: criterion for log rational pair}
    If a pair $(X,Z)$ is a log rational pair, then the natural morphism
    $$
    \I_{X,Z}\to R\mu_*\O_{\widetilde X}(-E)
    $$
    is a quasi-isomorphism, where $\mu:(\widetilde X,E)\to (X,Z)$ is a log resolution of singularities with $E=\mu^{-1}(Z)$. Conversely, for any proper morphism $f:(X',Z')\to (X,Z)$ such that $(X',Z')$ is a log rational pair with $f(Z')\subset Z$, if the natural morphism $\I_{X,Z}\to Rf_*\I_{X',Z'}$ admits a left quasi-inverse (that is a morphism $Rf_*\I_{X',Z'}\to \I_{X,Z}$ such that their composition is a quasi-isomorphism of $\I_{X,Z}$ to itself), then $(X,Z)$ is a log rational pair.
\end{cor}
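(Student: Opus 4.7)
The forward direction is immediate from Theorem \ref{thm: Du Bois complex, rational singularities}. The Du Bois pair hypothesis on $(X,Z)$ provides a quasi-isomorphism $\I_{X,Z}\isom\DB^0_{X,Z}$, while the assumption that $X\smallsetminus Z$ has rational singularities lets the theorem identify $\DB^0_{X,Z}\isom R\mu_*\O_{\widetilde X}(-E)$; the composition is the quasi-isomorphism claimed.

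For the converse I would verify the two conditions defining a log rational pair separately. First, that $U := X\smallsetminus Z$ has rational singularities. Since $f(Z')\subset Z$ forces $Z'\subset f^{-1}(Z)$, the ideal sheaves simplify over $U$ to $\I_{X,Z}|_U = \O_U$ and $\I_{X',Z'}|_{f^{-1}(U)} = \O_{f^{-1}(U)}$, so the hypothesized left quasi-inverse restricts to a splitting of the natural map $\O_U\to Rf_*\O_{f^{-1}(U)}$. Because $f^{-1}(U)$ is open in $X'\smallsetminus Z'$ it has rational singularities, and composing the restriction of $f$ with a resolution of $f^{-1}(U)$ yields a proper morphism from a smooth variety whose derived direct image of the structure sheaf retains $\O_U$ as a derived summand. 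Kov\'acs's splitting criterion \cite{Kovacs00} then forces $U$ to have rational singularities.

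Next, I would show that $(X,Z)$ is a Du Bois pair. Functoriality of the Du Bois complex of a pair together with functoriality of the ideal sheaves along $f$ produces the commutative square
$$
\begin{array}{ccc}
\I_{X,Z} & \xrightarrow{\alpha} & \DB^0_{X,Z} \\
g\downarrow & & \downarrow h \\
Rf_*\I_{X',Z'} & \xrightarrow{\alpha'} & Rf_*\DB^0_{X',Z'}
\end{array}
$$
in $D^b_{coh}(X,\O_X)$, in which $\alpha'$ is a quasi-isomorphism since $(X',Z')$ is a Du Bois pair. Writing $r$ for the assumed left quasi-inverse of $g$ and setting $s := r\circ (\alpha')^{-1}\circ h$, one checks $s\circ\alpha \simeq r\circ g \simeq \mathrm{id}$, so $\alpha$ is a split monomorphism in the derived category. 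Its cone is supported on $Z$ since $\alpha$ is an isomorphism over $U$ by the rationality, and hence Du Bois-ness, of $U$.

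The main obstacle is to promote this split monomorphism to a genuine quasi-isomorphism. I would accomplish this by invoking Theorem \ref{thm: Du Bois complex, rational singularities} once more, now applicable by step one: it identifies $\DB^0_{X,Z}\isom \gr^F_0\DR\bigl(j_!\bigoplus_i\IC^H_{X_i\smallsetminus Z}[-\dim X_i]\bigr)$, whose underlying Hodge module complex is built from pure Hodge modules of strict support $\overline{X_i\smallsetminus Z}$. A $Z$-supported derived summand of $\DB^0_{X,Z}$ would correspond, via the strict-support decomposition of pure Hodge modules and the exactness properties of $\gr^F_0\DR$ developed in Section \ref{sec: graded de Rham}, to a pure Hodge module summand of strict support contained in $Z$, contradicting the construction. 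Hence the cone of $\alpha$ vanishes and $(X,Z)$ is a Du Bois pair, completing the proof.
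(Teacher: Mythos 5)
Your forward direction has a genuine gap. You claim it is "immediate" by composing the quasi-isomorphism $\I_{X,Z}\isom\DB^0_{X,Z}$ (from the Du Bois pair hypothesis) with the quasi-isomorphism $\DB^0_{X,Z}\isom R\mu_*\O_{\widetilde X}(-E)$ from Theorem~\ref{thm: Du Bois complex, rational singularities}. But the corollary asserts that the \emph{natural} morphism $\I_{X,Z}\to R\mu_*\O_{\widetilde X}(-E)$ (the one induced by pulling back regular functions) is a quasi-isomorphism, and the quasi-isomorphism produced by Theorem~\ref{thm: Du Bois complex, rational singularities} is not known to be natural---this is exactly the content of Remark~\ref{rmk: naturality}. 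So the composition you write down yields \emph{some} quasi-isomorphism between abstractly isomorphic objects, but not the natural map. The paper's own proof does not treat this as formal: it records that the two objects are abstractly isomorphic and that all higher cohomologies of $R\mu_*\O_{\widetilde X}(-E)$ vanish, and then proves the natural map $\I_{X,Z}\to \mu_*\O_{\widetilde X}(-E)$ is an isomorphism via an explicit argument through the normalization $\mu^n:X^n\to X$, using that $\mu^n$ is an isomorphism over $X\smallsetminus Z$ to get a splitting, and then that $\I_{X,Z}$ has no embedded primes to kill the complementary summand.

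On the converse, your overall structure (first rationality of $X\smallsetminus Z$, then the Du Bois pair condition) matches the paper's, but the paper handles both steps by direct citation to \cite{Kovacs00}*{Theorem 1} and \cite{Kovacs11}*{Theorem 5.4}. Your argument for rationality of $X\smallsetminus Z$ is essentially a restriction-to-$U$ version of Kov\'acs's criterion and is fine. Your argument for the Du Bois pair condition, however, does not work as written: you assert that a $Z$-supported derived summand of $\DB^0_{X,Z}\isom\gr^F_0\DR\left(j_!\left(\bigoplus_i\IC^H_{X_i\smallsetminus Z}[-\dim X_i]\right)\right)$ would produce a pure Hodge module summand of strict support contained in $Z$, "contradicting the construction." This is wrong on two counts: $j_!$ applied to a pure Hodge module is a complex of genuinely mixed Hodge modules whose weight-graded pieces can perfectly well have strict support contained in $Z$ (indeed the whole point of $j_!$ is to glue in something along $Z$); and a direct-summand decomposition of $\gr^F_0\DR(\M^\bullet)$ in $D^b_{coh}(X,\O_X)$ does not lift to a decomposition of $\M^\bullet$ in $D^bMHM(X)$, since $\gr^F_0\DR$ is not full or conservative. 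You should simply cite \cite{Kovacs11}*{Theorem 5.4} here, as the paper does.
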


\begin{rmk}
The notion of a rational pair was defined by Koll\'ar, see \cite{Kollar13}*{Definition 2.80}, and using Kov\'acs \cite{Kovacs11}*{Theorem 1.1} we can easily see that a rational pair is log rational. However, the notion of a log rational pair is more general.
\end{rmk}

One immediate consequence of Corollary \ref{cor: criterion for log rational pair} is the inversion of adjunction for rational and Du Bois singularities, proposed as a conjecture by Kov\'acs-Schwede \cite{KS11}*{Conjecture 12.5}. Ma-Schwede-Shimomoto \cite{MSS17}*{Corollary 3.10} proved a stronger version involving rational pairs (see \cite{KS16}*{Conjecture 7.9}). We give an alternative concise proof using Corollary \ref{cor: criterion for log rational pair}.

\begin{cor}[Inversion of adjunction]
\label{cor: inversion of adjunction}
    Let $X$ be a variety and $D$ be a reduced Cartier divisor. If $X\smallsetminus D$ has rational singularities and $D$ has Du Bois singularities, then $X$ has rational singularities.
\end{cor}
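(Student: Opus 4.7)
Since the statement is local on $X$, we may assume $X$ is embeddable. Fix a log resolution $\mu : (\widetilde X, E) \to (X, D)$ with $E = \mu^{-1}(D)$ a reduced simple normal crossing divisor; the pair $(\widetilde X, E)$ is automatically log rational. Because $X \smallsetminus D$ has rational singularities, Theorem \ref{thm: Du Bois complex, rational singularities} applied to $(X, D)$ yields
\[
\DB^0_{X,D} \isom R\mu_*\O_{\widetilde X}(-E).
\]

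The plan has two stages. First, I aim to show that $(X, D)$ is a log rational pair by invoking the converse direction of Corollary \ref{cor: criterion for log rational pair} with $f = \mu$; this amounts to producing a left quasi-inverse of the natural map
\[
\alpha : \O_X(-D) \longrightarrow R\mu_*\O_{\widetilde X}(-E).
\]
Since $D$ is Cartier, tensoring by the invertible sheaf $\O_X(D)$ and applying the projection formula reduces the task to splitting $\O_X \to R\mu_*\O_{\widetilde X}(F)$, where $F := \mu^*D - E \geq 0$ is $\mu$-exceptional (a log resolution being an isomorphism at the generic point of each component of $D$, so $\mu^*D$ contains each strict transform with multiplicity one). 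The splitting is extracted from the composite $\O_X \to R\mu_*\O_{\widetilde X} \to R\mu_*\O_{\widetilde X}(F)$ induced by the inclusion $\O_{\widetilde X} \hookrightarrow \O_{\widetilde X}(F)$, combined with a Grauert--Riemenschneider--type vanishing for the higher direct images and the fact that $\mu_*\O_{\widetilde X}(F) \isom \O_X$ by $\mu$-exceptionality together with normality of $X$ in codimension one.

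Second, once $(X, D)$ is log rational, we obtain $\O_X(-D) \isom R\mu_*\O_{\widetilde X}(-E)$. The Du Bois hypothesis on $D$, combined with the general theory of Du Bois complexes applied to the proper surjection $\mu|_E : E \to D$ from the SNC variety $E$, yields $\O_D \isom R\mu_*\O_E$. Comparing the two distinguished triangles
\[
\O_X(-D) \to \O_X \to \O_D \xrightarrow{+1}, \qquad R\mu_*\O_{\widetilde X}(-E) \to R\mu_*\O_{\widetilde X} \to R\mu_*\O_E \xrightarrow{+1},
\]
with the outer vertical maps being quasi-isomorphisms, the five-lemma forces $\O_X \isom R\mu_*\O_{\widetilde X}$. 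Together with the normality of $X$ (which follows from $X \smallsetminus D$ being rational, hence normal, and $D$ being Cartier making $X$ regular at generic points of $D$), this is exactly the condition for $X$ to have rational singularities.

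The main obstacle is the construction of the left quasi-inverse of $\alpha$ in the first stage: controlling $R^{\geq 1}\mu_*\O_{\widetilde X}(F)$ along the $\mu$-exceptional locus is where the Cartier hypothesis on $D$ and a Steenbrink/Grauert--Riemenschneider-type vanishing must be delicately combined. A secondary subtlety is verifying the isomorphism $\O_D \isom R\mu_*\O_E$ when the ambient $X$ is itself singular, which requires the general Du Bois formalism for pairs rather than its classical smooth-ambient form.
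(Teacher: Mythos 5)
Your overall plan has the right ingredients — Corollary \ref{cor: criterion for log rational pair} and the twist by $\O_X(D)$ — but Stage 1 has a genuine gap, and it is precisely the step where the substance of the theorem resides. You aim to prove that $(X,D)$ is log rational by applying the \emph{converse} direction of Corollary \ref{cor: criterion for log rational pair}, which requires you to \emph{produce} a left quasi-inverse of $\alpha:\O_X(-D)\to R\mu_*\O_{\widetilde X}(-E)$, or equivalently of $\O_X\to R\mu_*\O_{\widetilde X}(F)$ with $F=\mu^*D-E$. Your claimed construction invokes ``Grauert--Riemenschneider--type vanishing'' for $R^{>0}\mu_*\O_{\widetilde X}(F)$, but no such theorem applies here: Grauert--Riemenschneider vanishing concerns $\omega_{\widetilde X}$, not $\O_{\widetilde X}(F)$, and in fact the vanishing of these higher direct images is morally equivalent to the conclusion you are trying to prove. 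Without it, the factorization $\O_X\to R\mu_*\O_{\widetilde X}\to R\mu_*\O_{\widetilde X}(F)$ and the identity $\mu_*\O_{\widetilde X}(F)\isom\O_X$ give you no retraction back to $\O_X$. The argument is circular.

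The missing ingredient is an independent proof that $X$ has Du Bois singularities. The paper cites exactly such a result: Kov\'acs \cite{Kovacs00b}*{Theorem 3.2} shows that if $D$ is a Cartier divisor with Du Bois singularities, then $X$ is Du Bois along $D$. With this in hand, $(X,D)$ is a Du Bois pair, hence log rational, and the paper then applies the \emph{forward} direction of Corollary \ref{cor: criterion for log rational pair} to get $\O_X(-D)\isom R\mu_*\O_{\widetilde X}(-E)$ naturally. The twist by $\O_X(D)$ then does give a genuine left quasi-inverse of $\O_X\to R\mu_*\O_{\widetilde X}$ — here the quasi-isomorphism $\O_X\to R\mu_*\O_{\widetilde X}(\mu^*D-E)$ is already known, so the factorization through $R\mu_*\O_{\widetilde X}$ immediately yields the retraction, and Kov\'acs's criterion \cite{Kovacs00}*{Theorem 1} concludes. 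Your Stage 2 is then redundant; it is also not self-contained, since the claim $\O_D\isom R\mu_*\O_E$ from $D$ Du Bois and $E$ SNC is not a standard consequence of the Du Bois formalism for the non-birational proper map $\mu|_E:E\to D$ — in the paper's framework it would rather be deduced from the other two vertical quasi-isomorphisms, which is exactly what you wanted to prove. In short: replace the unjustified vanishing in Stage~1 by the citation of Kov\'acs's theorem that $D$ Du Bois Cartier implies $X$ Du Bois, switch to the forward direction of Corollary \ref{cor: criterion for log rational pair}, and drop Stage~2.
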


As a by-product of Corollary \ref{cor: criterion for log rational pair}, we also give a new Hodge theoretic proof of the following generalization of the theorem of Koll\'ar-Kov\'acs \cite{KK10} stating that log canonical singularities are Du Bois. Notationwise, a sub-boundary $\Q$-divisor $\Delta$ is a divisor whose coefficients are rational numbers at most $1$. If $\Delta$ is additionally effective, then we call it a boundary $\Q$-divisor.

\begin{thm}
\label{thm: lc implies DB}
Let $(X,\Delta)$ be a log canonical pair with a sub-boundary $\Q$-divisor $\Delta$. Suppose there is a proper morphism $f:X\to Y$ such that $K_X+\Delta\sim_{\Q,f}0$ and $f_*\O_X(\lceil-\Delta^{<1}\rceil)= \O_Y$. Then $Y$ has Du Bois singularities.

Furthermore, if $W\subset Y$ is a union of $f$-images of log canonical centers of $(X,\Delta)$, then $W$ has Du Bois singularities, and if $W$ is minimal, then $W$ has rational singularities.
\end{thm}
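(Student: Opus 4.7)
The plan is to invoke Corollary \ref{cor: criterion for log rational pair} to establish that the pair $(Y,W)$ is log rational for suitable $W$, and then to deduce Du Bois-ness of $Y$ (and of the various $W$'s) by induction on the dimensions of the images of log canonical centers.

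First I would set up the geometric input. Choose a log resolution $\mu\colon \widetilde X\to X$ of $(X,\Delta)$ that is also log smooth with respect to $f$, and write
$$
K_{\widetilde X}+\widetilde\Delta=\mu^*(K_X+\Delta)+F,
$$
with $\widetilde\Delta$ snc and $F$ an effective $\mu$-exceptional divisor. Let $E\subset\widetilde X$ be the reduced snc divisor $\mu^{-1}(nklt(X,\Delta))$, set $g=f\circ\mu\colon\widetilde X\to Y$, and let $W$ be the union of $f$-images of log canonical centers of $(X,\Delta)$. The pair $(\widetilde X,E)$ is log smooth and hence log rational, and $g(E)\subset W$ by construction, so the hypotheses of Corollary \ref{cor: criterion for log rational pair} are in place apart from the left quasi-inverse.

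The main step is to produce a left quasi-inverse of the natural morphism $\I_{Y,W}\to Rg_*\I_{\widetilde X,E}=Rg_*\O_{\widetilde X}(-E)$. Since $K_X+\Delta\sim_{\Q,f}0$ and $\lceil-\widetilde\Delta^{<1}\rceil$ is the round-up of a divisor numerically trivial over $Y$ up to effective exceptional contribution, the pair $(\widetilde X,E)$ and the morphism $g$ fall within the scope of Fujino's extension of the Koll\'ar splitting and vanishing theorems to the log canonical setting. This yields a derived-category decomposition
$$
Rg_*\O_{\widetilde X}(-E)\isom \bigoplus_i R^ig_*\O_{\widetilde X}(-E)[-i],
$$
while the projection formula together with the hypothesis $f_*\O_X(\lceil-\Delta^{<1}\rceil)=\O_Y$ identifies $g_*\O_{\widetilde X}(-E)$ with $\I_{Y,W}$. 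Projection onto the zeroth summand then furnishes the desired left quasi-inverse, and Corollary \ref{cor: criterion for log rational pair} concludes that $(Y,W)$ is log rational.

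To deduce the remaining statements I would use the distinguished triangles
$$
\I_{Y,W}\to\O_Y\to\O_W\xrightarrow{+1},\qquad \DB^0_{Y,W}\to\DB^0_Y\to\DB^0_W\xrightarrow{+1},
$$
which reduce Du Bois-ness of $Y$ to Du Bois-ness of $W$, and then induct on $\dim W$. A minimal $f$-image $W_0$ arises, after Koll\'ar's adjunction onto the associated lc center and a small perturbation of $\Delta$, as the target of a morphism of the same type in strictly smaller dimension with no further lc-center images strictly inside $W_0$; the argument above applied with $W=\emptyset$ in that sub-situation gives $(W_0,\emptyset)$ log rational, i.e.\ $W_0$ has rational singularities (in particular Du Bois). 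The general $W$ then follows by stratifying into minimal and non-minimal $f$-images and iterating the triangle argument.

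The main obstacle will be the second step: confirming that Fujino's splitting delivers the left quasi-inverse in precisely the form needed here. The splitting itself is essentially Fujino's theorem, but matching our specific $E$, the effective exceptional discrepancy $F$, and the identification of $g_*\O_{\widetilde X}(-E)$ with $\I_{Y,W}$ compatibly with the derived splitting requires careful bookkeeping of the integer and fractional parts of $\widetilde\Delta$. A secondary technicality is the inductive adjunction onto minimal lc centers: both the sub-boundary condition and the push-forward hypothesis must be arranged to descend after adjunction, which is the standard move in Koll\'ar-type inductions but must be tracked carefully so that the inductive sub-problem really satisfies the hypotheses of the theorem.
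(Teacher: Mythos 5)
Your proposal diverges from the paper's argument at the crucial step, and the divergence is substantive. You claim a derived-category decomposition
$Rg_*\O_{\widetilde X}(-E)\isom \bigoplus_i R^ig_*\O_{\widetilde X}(-E)[-i]$
attributed to ``Fujino's extension of the Koll\'ar splitting.'' The paper never proves, uses, or needs such a decomposition, and for good reason: the underlying object $j_!\Q^H_{\widetilde X\smallsetminus E}$ is a genuinely \emph{mixed} Hodge module, so Saito's Decomposition Theorem (Theorem \ref{thm: Saito's decomposition theorem}) does not apply to it directly, and the formality of $Rg_*\O_{\widetilde X}(-E)$ as a complex cannot be read off from torsion-freeness of its cohomologies alone. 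The paper's actual mechanism for producing a left quasi-inverse is far more targeted: (i) it identifies a \emph{pure} polarizable Hodge module $\mathbb D^H_{(D'_{0,k}\cup R')_{I_k}}$ supported on a minimal stratum, and applies the decomposition theorem only to that pure object; (ii) it uses semisimplicity of pure Hodge modules to show that any morphism from a mixed Hodge module to a pure one splits off its image after pushing forward; and (iii) the passage from this splitting back to $\I_{Y,W}$ runs through the cyclic-cover analysis of Section \ref{sec: cyclic coverings}, the computation of $\mu_m$-eigensheaves, and the associated-primes bound of Proposition \ref{prop: torsion freeness of direct image of MHM}. None of this is replicated in your outline.

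Beyond the gap, there is also a purposive mismatch: if one takes Fujino's splitting and vanishing machinery for log canonical pairs as a black box, the conclusion is already essentially available through the Fujino--Liu route (\cite{FL22}) that the paper explicitly sets out to avoid --- the stated point of the paper's argument is to replace the MMP/quasi-log-canonical approach with a self-contained mixed Hodge module argument. A secondary issue: you assert that the projection formula together with $f_*\O_X(\lceil-\Delta^{<1}\rceil)=\O_Y$ yields $g_*\O_{\widetilde X}(-E)\isom\I_{Y,W}$, but in the paper the analogous identification \eqref{eqn: I isom O(A-D)} requires showing the surjection $f_*\O_X(A-D_1)\twoheadrightarrow f_*\O_{D_0}(A-D_1)$, which depends on controlling associated primes of $R^1f_*\O_X(A-D_0-D_1)$ via Corollary \ref{cor: cyclic cover MHM on X'} and Proposition \ref{prop: torsion freeness of direct image of MHM}; it is not a formal consequence of the hypotheses. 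Finally, the induction on minimal centers proposed via ``Koll\'ar adjunction onto the associated lc center'' is not what the paper does --- Theorem \ref{thm: lc admits log rational stratification} works directly on $X$ with cyclic covers and minimal snc strata, not by restricting to a lower-dimensional lc center and re-running the theorem there.
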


Here, $W$ is minimal if there is no $f$-image of a log canonical center of $(X,\Delta)$ properly contained in $W$. In addition, $\Delta^{<1}$ denotes the partial sum of components of $\Delta$ with coefficients less than $1$.

This theorem is also implied by the results of Fujino-Liu \cite{FL22}*{Theorem 1.1} that quasi-log canonical pairs are Du Bois, and by Fujino \cite{Fujino22}*{Corollary 1.10} that minimal quasi-log canonical strata have rational singularities (see also \cite{Fujino99}*{Theorem 1.2} when $(X,\Delta)$ is klt). While the previous proofs in \cite{KK10} and \cites{Fujino22,FL22} are based on either the minimal model program or the theory of quasi-log canonical pairs, the proof given here is based on the theory of mixed Hodge modules. A sketch for a special situation ($X$ is Gorenstein, $\Delta=0$, and $Y=X$) is provided at the beginning of Section \ref{sec: log canonical and Du Bois}.

Furthermore, we prove a result regarding the singularity of the Proj of a log canonical ring.

\begin{cor}
\label{cor: log canonical model is Du Bois}
Let $(X,D)$ be a proper log canonical pair (resp. klt pair) with a boundary $\Q$-divisor $D$. If the log canonical ring $R(X,D)$ is finitely generated, then $\mathrm{Proj}\; R(X,D)$ has Du Bois singularities (resp. rational singularities).

Furthermore, there exists an increasing sequence of subvarieties
$$
\emptyset=Y_{k+1}\subsetneq Y_{k}\subsetneq \dots\subsetneq Y_1\subsetneq Y_0=\mathrm{Proj}\; R(X,D)
$$
such that $(Y_i,Y_{i+1})$ is a log rational pair for all $i\in [0,k]$.
\end{cor}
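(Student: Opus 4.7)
The plan is to realize $Y:=\mathrm{Proj}\; R(X,D)$ as the target of a morphism from a log resolution, apply Theorem \ref{thm: lc implies DB}, and build the stratification from the poset of $f$-images of log canonical centers.

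First, since $R(X,D)$ is finitely generated, $Y$ is a normal projective variety equipped with a natural rational map $X\dashrightarrow Y$. I would take a log resolution $\pi:X'\to X$ of $(X,D)$ that resolves the indeterminacy, giving a morphism $f:X'\to Y$. Setting $\Delta'=\pi_*^{-1}D-\sum_i a(E_i;X,D)E_i$ summed over $\pi$-exceptional $E_i$, one has $K_{X'}+\Delta'=\pi^*(K_X+D)$, $(X',\Delta')$ is a log canonical pair with sub-boundary, and by construction of the log canonical model $K_{X'}+\Delta'\sim_\Q f^*H$ for an ample $\Q$-divisor $H$ on $Y$, so $K_{X'}+\Delta'\sim_{\Q,f}0$.

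The remaining hypothesis of Theorem \ref{thm: lc implies DB} is $f_*\O_{X'}(\lceil-\Delta'^{<1}\rceil)=\O_Y$; a direct computation shows that $\lceil-\Delta'^{<1}\rceil$ is supported on the $\pi$-exceptional divisors of positive discrepancy, and combined with the $f$-triviality of $K_{X'}+\Delta'$ and the normality of $Y$, the pushforward equality follows from a relative vanishing argument. This step is the main technical obstacle, because $f$ need not be birational (when $K_X+D$ is not big, $\pi$-exceptional divisors may even dominate $Y$), so the usual exceptional-pushforward argument does not apply directly and one must genuinely exploit the $f$-triviality via a canonical bundle formula or relative Kawamata--Viehweg vanishing. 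Once verified, Theorem \ref{thm: lc implies DB} yields that $Y$ has Du Bois singularities, that every union $W$ of $f$-images of log canonical centers of $(X',\Delta')$ is Du Bois, and that $W$ has rational singularities when minimal.

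Next, I would build the stratification. Let $\mathcal{F}$ be the finite collection of $f$-images of log canonical centers of $(X',\Delta')$. Define $Y_0=Y$ and inductively let $Y_{i+1}$ be the union of those $F\in\mathcal{F}$ strictly contained in some irreducible component of $Y_i$; the sequence terminates because $\mathcal{F}$ is finite. Each $Y_i$ with $i\ge 1$ is a union of elements of $\mathcal{F}$, hence Du Bois by Theorem \ref{thm: lc implies DB}. At a point $y\in Y_i\setminus Y_{i+1}$, the irreducible component $C$ of $Y_i$ containing $y$ has the property that on a small enough open neighborhood $U$ of $y$ in $Y$, no $F\in\mathcal{F}$ with $F\subsetneq C$ meets $U$ (this uses the Ambro--Kawamata theorem that intersections of log canonical centers decompose into unions of log canonical centers, preventing incomparable elements of $\mathcal{F}$ through $y$ from forcing $y\in Y_{i+1}$). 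Applying Theorem \ref{thm: lc implies DB} to the restriction over $U$ then shows that $C\cap U$ is a minimal $f$-image and hence has rational singularities; since rationality is local, $(Y_i,Y_{i+1})$ is a log rational pair. For the klt case, $(X',\Delta')$ is sub-klt, so $\mathcal{F}=\emptyset$ and the chain collapses to $\emptyset\subsetneq Y$; the rationality of $Y$ follows from the standard fact that the log canonical model of a klt pair is klt, hence has rational singularities by Elkik's theorem.
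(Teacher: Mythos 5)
There is a genuine gap at exactly the spot you flag as a ``technical obstacle.'' You take $\Delta'=\pi^*(K_X+D)-K_{X'}$ (the crepant pullback of the boundary) and then need $f_*\O_{X'}(\lceil-\Delta'^{<1}\rceil)=\O_Y$ in order to invoke Theorem~\ref{thm: lc implies DB}. The divisor $\lceil-\Delta'^{<1}\rceil$ is effective and $\pi$-exceptional, so $\pi_*\O_{X'}(\lceil-\Delta'^{<1}\rceil)=\O_X$; but this does not help compute $f_*$, because $f$ and $\pi$ do not fit into a commutative square (there is no morphism $X\to Y$, only the Iitaka rational map), and when $K_X+D$ is not big, $\pi$-exceptional divisors of positive discrepancy can dominate $Y$. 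Invoking an unspecified ``relative vanishing argument'' or ``canonical bundle formula'' does not close this; as written, the step is an assertion, not a proof. Your stratification argument, which patches together local minimality via the Ambro--Kawamata decomposition of intersections of log canonical centers, is also more elaborate than necessary, but that is secondary.

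The paper avoids this obstruction entirely by going through Corollary~\ref{cor: pushforward pluricanonical is Du Bois}, where the pushforward identity is \emph{manufactured} rather than \emph{verified}. Concretely: after replacing $(X,D)$ by a log resolution $(\widetilde X,\widetilde D)$ with $\widetilde D=\widetilde\Delta^{\ge0}$ a boundary, one only needs to prove that $f_*\O_{\widetilde X}(N(K_{\widetilde X}+\widetilde D))$ is a line bundle, which the paper does by a short cohomological argument using that $R^{(N)}(X,D)$ is generated in degree one. With $L$ that line bundle, adjointness gives $f^*L\to\O_{\widetilde X}(N(K_{\widetilde X}+\widetilde D))$, hence an effective integral divisor $A$ with $f^*L\isom\O_{\widetilde X}(N(K_{\widetilde X}+\widetilde D)-A)$, and then $f_*\O_{\widetilde X}(A)=\O_Y$ is \emph{automatic} from the projection formula $L\isom L\tensor f_*\O_{\widetilde X}(A)$. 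Setting $\Delta:=\widetilde D-\tfrac{1}{N}A$, the hypotheses of Theorem~\ref{thm: lc implies DB} hold by construction, and the stratification (with each $Y_i$ a union of $f$-images of log canonical centers and $(Y_i,Y_{i+1})$ log rational) comes for free from the conclusion of Corollary~\ref{cor: pushforward pluricanonical is Du Bois}. So the correct move is to first reduce the problem to a statement about $f_*$ of a single adjoint line bundle and let adjointness produce the auxiliary divisor $A$, rather than hoping that the pushforward condition holds for the crepant-pullback boundary.
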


For klt pairs, this result is established in Nakayama \cite{Nakayama88}*{Theorem} (when $D=0$) and Fujino-Mori \cite{FM00}*{Theorem 5.4}, with different proofs. For log canonical pairs, Fujino proved in the proof of \cite{Fujino15}*{Theorem 1.6} that $\mathrm{Proj}\; R(X,D)$ is quasi-log canonical, and hence Du Bois by \cite{FL22}, although this is not explicitly stated.

\subsubsection*{\textnormal{\textbf{Overview of the paper.}}}

The background needed for Saito's theory is laid out in Sections \ref{sec: Saito MHM} and \ref{sec: graded de Rham}. This is followed by the analysis of rational singularities in Section \ref{sec: rational singularities}. Section \ref{sec: characterization of Du Bois complexes} establishes the characterization of the Du Bois complex of a pair when the complement has rational singularities, namely Theorem \ref{thm: Du Bois complex, rational singularities}.

The remainder of the paper focuses on the main results and applications. Section \ref{sec: extending forms} deals with the extension theorems on (log) forms. After developing the necessary technicalities, we show in Section \ref{sec: log canonical and Du Bois} that log canonical singularities are Du Bois and study related problems.

\subsubsection*{\textnormal{\textbf{Acknowledgements.}}}

I am grateful to my advisor, Mihnea Popa, for his support and valuable conversations. I would like to thank Osamu Fujino, Joe Harris, S\'andor Kov\'acs, Mircea Mustață, Christian Schnell, Wanchun Shen, and Ahn Duc Vo for their insightful comments and helpful discussions. I am supported by the 17th Kwanjeong Study Abroad Scholarship.

\section{Overview of Saito's theory of mixed Hodge modules}
\label{sec: Saito MHM}

Saito \cites{Saito88, Saito90} introduced the theory of mixed Hodge modules on any singular variety $X$. Given an embedding $X\subset \W$ into a smooth variety, a mixed Hodge module $\M\in MHM(X)$ has the underlying filtered regular holonomic $\D$-module with $\Q$-structure,
\begin{equation}
\label{eqn: MHM structure}
   (M,F_\bullet M, K;W), 
\end{equation}
where $M$ is a regular holonomic (right) $\D_{\mathcal W}$-module, $F$ is an increasing Hodge filtration on $M$, $K\in Perv(X;\Q)$ is a perverse sheaf on $X$ with coefficients in $\Q$ with an isomorphism $\DR_\W(M)\isom K\tensor_\Q \C$, and $W$ is a weight filtration on the structure $(M,F_\bullet M,K)$. For convenience, throughout the text, we also denote the underlying filtered regular holonomic $\D$-module by $\M$.

Recall that for a regular holonomic (right) $\D_\W$-module $M$, there exists a de Rham functor
$$
\DR_\W(M):=M^{an}\tensor^L_{\D_\W^{an}}\O_\W^{an}\in Perv(\W;\C),
$$
where $(\bullet)^{an}$ is the analytification functor (see for example \cite{HTT}*{Section 4.7}). This can be naturally represented as the analytification of a complex
\begin{equation}
\label{eqn: de Rham complex}
0\to M\tensor_{\O_\W}\wedge^{\dim \W}T_\W\to\cdots\to M\tensor_{\O_\W}T_\W\to M\to 0,
\end{equation}
induced by the Spencer resolution of the left $\D_\W$-module $\O_\W$ (c.f. \cite{HTT}*{Lemma 1.5.27}). Here, $T_\W$ is the tangent sheaf of $\W$ and the rightmost $M$ is supported at degree $0$. For example, when $M$ is the right $\D_\W$-module $\w_\W:=\det\Omega_\W$, we obtain the usual de Rham complex of $\W$, which is quasi-isomorphic to the perverse sheaf $\C_\W[\dim \W]$.

For a mixed Hodge module $\M\in MHM(X)$, its de Rham complex $\DR_\W(\M)$ is defined by $\DR_\W(M)$, which has an induced Hodge filtration $F$ on the complex \eqref{eqn: de Rham complex} defined by
\begin{equation}
\label{eqn: filtration on DR}
F_p\left(M\tensor_{\O_\W}\wedge^{-i}T_\W\right)=F_{p+i}M\tensor_{\O_\W}\wedge^{-i}T_\W,
\end{equation}
for each $i$-th degree component of \eqref{eqn: de Rham complex}. It is well-known that the graded complex $\gr^F_p\DR_\W(\M)$ is the complex of coherent $\O_X$-modules. This is well-defined for the complex of mixed Hodge modules $\M^\bullet$, thereby inducing the graded de Rham functor
$$
\gr^F_p\DR_\W:D^bMHM(X)\to D^b_{coh}(X,\O_X)
$$
where $D^bMHM(X)$ is the bounded derived category of mixed Hodge modules on $X$ and $D^b_{coh}(X,\O_X)$ is the bounded derived category of $\O_X$-modules with coherent cohomologies. This functor is independent of the choice of an embedding $X\subset \W$ (see \cite{Saito90}*{Proposition 2.33}). Accordingly, we omit the subscript $\W$ and denote this functor $\gr^F_p\DR$.

One of the main accomplishments of Saito's theory is the construction of natural functors compatible with those for underlying $\Q$-complexes.

\begin{thm}[\cite{Saito90}*{Theorem 0.1}]
\label{thm: Saito main}
Let $X$ be an algebraic variety. Then we have a dualizing functor
\begin{gather*}
\dual:MHM(X)\to MHM(X)^{\mathrm{op}}.
\end{gather*}
Let $f:X\to Y$ be a morphism of algebraic varieties. Then we have natural functors
$$
f_*,f_!:D^bMHM(X)\to D^bMHM(Y),\quad f^*, f^!:D^bMHM(Y)\to D^bMHM(X).
$$
These functors are compatible with the corresponding functors for the underlying $\Q$-complexes.
\end{thm}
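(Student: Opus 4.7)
The plan is to bootstrap Saito's theory from the case of smooth varieties and polarizable variations of Hodge structure, following the inductive scheme outlined in \cite{Saito88, Saito90}. The construction proceeds by first building the objects (mixed Hodge modules) and then, separately, the six functors between them, at each stage verifying strictness of the Hodge filtration, which is the one substantive analytic input on which everything else rests.

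First I would define $MHM(X)$ inductively, decomposing by strict support. The base case is pure Hodge modules of weight $w$: by Saito's Structure Theorem, a simple such object with strict support $Z$ is the intermediate extension $\IC_Z(\mathcal{V})$ of a polarizable variation of Hodge structure on a Zariski open of $Z$. A mixed Hodge module is then a filtered regular holonomic $\D$-module with $\Q$-structure, equipped with a finite weight filtration $W$ whose graded pieces are polarizable pure Hodge modules, satisfying compatibility with nearby and vanishing cycles. Kashiwara's equivalence, refined to the filtered setting, allows one to work in an ambient smooth $\W$ whenever $X$ is embeddable, so the definition does not depend on the embedding.

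Second, I would construct the functors in a definite order. For a closed embedding $i$, the pair $(i_*, i^!)$ comes from filtered Kashiwara's equivalence. For a projective morphism $f:X\to Y$ between smooth varieties, the relative de Rham complex computes the underlying $f_*$; the essential point is that the induced Hodge filtration is strict with respect to $Rf_*$, which reduces (in the pure case via relative hard Lefschetz and polarization) to the Decomposition Theorem and Hodge-to-de Rham degeneration, and then propagates to the mixed case via the weight spectral sequence. For an open embedding $j:U\hookrightarrow X$ with complement locally $\{g=0\}$, one defines $j_*$ and $j_!$ by gluing across the nearby and vanishing cycle functors $\psi_g, \varphi_g$, which themselves are constructed from the $V$-filtration of Kashiwara-Malgrange along $g$, with filtered compatibility being an additional condition in the definition of $MHM$. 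An arbitrary morphism is factored into an open embedding and a proper morphism, and $f^*$, $f^!$ are obtained by adjunction and duality. Finally, the dualizing functor $\dual$ is built by descending filtered $\D$-module duality through the weight filtration, using polarizations to identify $\dual$ on pure pieces.

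Third, compatibility with the functors on $\Q$-perverse sheaves is essentially built in: at every step the underlying $\D$-module operation commutes, under $\DR$, with the corresponding perverse-sheaf operation, and the $\Q$-structure is transported in parallel. The hard part, exactly as in Saito's original treatment, is the pair of analytic statements that power everything else: strictness of $F$ under projective pushforward, and the good behavior of $F$, $W$, and $\Q$-structure under $\psi_g, \varphi_g$. Once these are established, the adjunctions, base-change isomorphisms, and construction of $\dual$ all follow by essentially formal arguments in the filtered derived category.
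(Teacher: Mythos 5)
The paper does not prove this theorem; it is quoted directly from Saito, \cite{Saito90}*{Theorem 0.1}, and used throughout as a foundational black box, so there is no internal proof to compare your attempt against. That said, your sketch is a fair high-level summary of Saito's actual strategy: defining $MHM(X)$ inductively through strict support and Kashiwara--Malgrange $V$-filtration compatibility, building the six functors by decomposing arbitrary morphisms into open embeddings and proper morphisms, and resting everything on strictness of the Hodge filtration under projective pushforward (established in the pure case via relative Hard Lefschetz and propagated to the mixed case through the weight spectral sequence). You correctly identify the analytic inputs that carry the whole edifice. But what you have written is an itinerary, not a proof — the arguments that make each step precise (filtered strictness, the gluing data across $\psi_g$ and $\varphi_g$, compatibility of the $\Q$-structure, independence of the local embedding) occupy the better part of \cite{Saito88} and \cite{Saito90}, and nothing in your sketch replaces them. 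In a paper like this one, which uses the theory rather than rebuilds it, citing Saito without proof is the standard and appropriate treatment; if you want to see the details carried out, those two references are the place to look.
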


The functors in Theorem \ref{thm: Saito main} exhibit various properties analogous to those of corresponding functors in the bounded derived category $D^b_c(X,\Q)$ of $\Q$-sheaves on $X$ with constructible cohomologies. For instance, if $g:Y\to Z$ is a morphism of algebraic varieties, then we have a canonical isomorphism
$$
(g\circ f)_*=g_*\circ f_*: D^bMHM(X)\to D^bMHM(Z)
$$
and likewise $(g\circ f)_!=g_!\circ f_!$, $(g\circ f)^*=f^*\circ g^*$, and $(g\circ f)^!=f^!\circ g^!$. Moreover, we emphasize the following fact that is used throughout the text.

\begin{prop}[\cite{Saito90}*{Section 4}]
We have the following canonical isomorphisms of the composition of functors:
$$
\dual\circ\dual=\mathrm{Id}, \quad \dual\circ f_*=f_!\circ\dual,\quad \dual\circ f^*=f^!\circ\dual.
$$
If $f$ is proper, then $f_*=f_!$. Furthermore, if there exists a cartesian square,
\begin{displaymath}
\xymatrix{
{X'}\ar[r]^{g'}\ar[d]_{f'}& {X}\ar[d]^{f}\\
{Y'}\ar[r]^{g}&{Y}
}
\end{displaymath}
then we have $g^!\circ f_*=f'_*\circ (g')^!$ and $g^*\circ f_!=f'_!\circ (g')^*$, compatible with the isomorphisms for the underlying $\Q$-complexes.
\end{prop}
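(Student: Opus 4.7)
The plan is essentially to defer to \cite{Saito90}*{Section 4}, but the strategy one would follow to prove these statements from Saito's foundational construction of $MHM(X)$ as triples of filtered $\D$-modules with $\Q$-structure equipped with a weight filtration is as follows. First, I would reduce the case of an arbitrary morphism $f:X\to Y$ to the composition of an open immersion $j$ and a proper morphism $\bar f$ via Nagata compactification, so that it suffices to verify each assertion when $f$ is either proper or an open embedding. For proper $f$, the direct image $f_*$ on mixed Hodge modules is constructed from the filtered $\D$-module direct image via the de Rham resolution, and the equality $f_*=f_!$ is essentially built into Saito's definition: both are computed by the same underlying filtered complex, and $f_!$ is defined so as to agree with $f_*$ in the proper case. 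The commutation $\dual\circ f_* = f_!\circ\dual$ for proper $f$ then reduces to Grothendieck--Serre duality for the proper direct image of filtered $\D$-modules, refined so as to preserve the Hodge and weight filtrations.

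For an open embedding $j$, the functor $j_!$ is constructed by Saito via the formula $j_! = \dual\circ j_*\circ\dual$, which makes the commutation with duality essentially tautological, once biduality is established. The identity $\dual\circ\dual = \mathrm{Id}$ is the most delicate point: on the level of filtered $\D$-modules it follows from holonomic $\D$-module duality, but one must verify that Saito's extended duality on $MHM(X)$ preserves the full structure, which is carried out inductively using the weight filtration and is one of the main technical achievements of \cite{Saito90}. The remaining identity $\dual\circ f^* = f^!\circ\dual$ follows by adjunction from the previously established identity for the direct image, combined with biduality.

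Base change for the cartesian square reduces, after again splitting $f$ and $g$ into open and proper pieces, to the classical proper base change for $\D$-modules (in the proper case) and to the straightforward compatibility of open pullback with direct image (in the open case), with the filtration compatibilities following from the definitions. Compatibility of all these isomorphisms with the corresponding isomorphisms for underlying $\Q$-perverse sheaves is guaranteed by Saito's construction of the forgetful functor $MHM(X)\to Perv(X;\Q)$, which by design intertwines the six operations. The main obstacle throughout is not any single identity but the bookkeeping required to confirm that Hodge and weight filtrations remain compatible under all operations, together with the strictness results needed to ensure that derived categorical constructions descend to well-defined functors on $D^bMHM$; this is precisely what occupies the bulk of \cite{Saito90}.
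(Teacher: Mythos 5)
The paper does not provide a proof of this proposition; it is stated as a citation to \cite{Saito90}*{Section 4} and is treated as foundational background. Your proposal, which defers to Saito and then sketches the strategy, is therefore consistent in spirit with what the paper does.

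That said, one point in your sketch is inaccurate in a way worth flagging. You write that for an open embedding $j$ the functor $j_!$ is \emph{constructed} by the formula $j_! = \dual\circ j_*\circ\dual$, so that the commutation with duality is ``essentially tautological.'' In Saito's actual development this is not how $j_!$ arises: for the complement of a divisor, both $j_*$ and $j_!$ are built directly from the theory of the Kashiwara--Malgrange $V$-filtration (via nearby and vanishing cycles and the gluing description of Hodge modules across a hypersurface), and the identity $\dual\circ j_* = j_!\circ\dual$ is then a theorem proved by comparing those two independently defined extensions. If one instead took your formula as the definition, one would still owe a proof of the adjunction $(j_!, j^*)$ and of the perversity/amplitude statements for $j_!$, which in Saito's route come out of the $V$-filtration analysis; so the labor is not avoided, merely relocated, and framing the duality relation as tautological obscures where the real content lies. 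A related minor imprecision: what you call ``proper base change for $\D$-modules'' handles the formula $g^*\circ f_! = f'_!\circ(g')^*$, while the other formula $g^!\circ f_* = f'_*\circ(g')^!$ is its Verdier dual; both ultimately reduce to the base change one already has for perverse sheaves together with strictness, but one should be careful to say which direction is being invoked and to note that neither is ``classical base change for $\D$-modules'' in the naive sense (that fails without flatness/transversality hypotheses). Otherwise your account of the role of Nagata compactification, Grothendieck--Serre duality for filtered direct images in the proper case, and the centrality of strictness and weight-filtration bookkeeping matches Saito's Section 4.
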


Additionally, we note the following distinguished triangles, used later in the paper.

\begin{prop}[\cite{Saito90}*{4.4.1}]
\label{prop: Saito triangle}
Let $i:Z\to X$ be a closed embedding and $j:X\smallsetminus Z\to X$ be an open embedding. Then for any $\M^\bullet\in D^bMHM(X)$, we have distinguished triangles
\begin{gather*}
\to j_!j^!\M^\bullet\to \M^\bullet\to i_*i^*\M^\bullet\xrightarrow{+1}\\
\to i_!i^!\M^\bullet\to \M^\bullet\to j_*j^*\M^\bullet\xrightarrow{+1}
\end{gather*}
compatible with the corresponding distinguished triangles for the underlying $\Q$-complexes.
\end{prop}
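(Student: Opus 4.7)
The plan is to bootstrap the statement from the classical six-functor formalism for constructible sheaves (or for regular holonomic $\D$-modules) up to the level of mixed Hodge modules, where the key issue is not the existence of morphisms but the strictness with respect to the Hodge and weight filtrations.

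First I would fix a local embedding $X \subset \W$ into a smooth variety, so that objects of $D^b MHM(X)$ are represented by bifiltered regular holonomic $\D_\W$-modules $(M, F_\bullet M, W_\bullet M)$ together with a $\Q$-structure, supported on $X$. On the underlying $\Q$-perverse sheaf level both triangles are standard: they are the adjunction triangles $j_!j^! \to \mathrm{Id} \to i_*i^*$ and $i_!i^! \to \mathrm{Id} \to j_*j^*$ for the inclusions $i, j$ associated to the stratification $X = Z \sqcup (X \smallsetminus Z)$, and analogously on the $\D$-module side they come from the distinguished triangles $R\Gamma_{[Z]} M \to M \to Rj_*j^{-1}M$ (with the dual version using local cohomology with supports outside $Z$). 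The whole issue is therefore to promote these adjunction triangles from $D^b_c(X, \Q)$ (and their $\D$-module avatars) to $D^b MHM(X)$.

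Next, I would appeal to Saito's construction of the four functors $i_*, i^*, i_!, i^!$ and $j_*, j_!, j^*, j^!$ as stated in Theorem~\ref{thm: Saito main}. For a closed embedding the functors $i_*=i_!$ are exact on $MHM$, and the forgetful-to-$\Q$-complex functor commutes with them by construction, so the adjunction morphism $\mathrm{Id} \to i_*i^*$ and its cone already exist as morphisms in $D^b MHM(X)$. For an open embedding $j$, the functor $j_!$ is constructed via dual nearby/vanishing cycles along a local defining function of $Z$, and the unit/counit maps $j_!j^* \to \mathrm{Id}$ and $\mathrm{Id} \to j_*j^*$ exist at the level of filtered $\D$-modules with $\Q$-structure. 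One then defines the triangles in $D^b MHM(X)$ by taking cones of these adjunction morphisms and verifies that, on passing to the underlying $\Q$-complexes, they recover the classical adjunction triangles; since the forgetful functor is conservative on morphisms in the appropriate sense, this identification is enough to conclude that the cones are the expected $i_*i^*\M^\bullet$ and $i_!i^!\M^\bullet$, respectively.

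The main obstacle, and what makes this genuinely delicate in Saito's framework, is compatibility with the Hodge filtration when $Z$ is not a divisor and $j$ is not an affine open embedding. In that situation $j_!$ and $j_*$ are not given by naive extension but are built inductively using Saito's specialization construction along a sequence of divisors cutting out $Z$, and one must prove that the adjunction morphisms are strict for $F_\bullet$ and $W_\bullet$ so that the octahedral axiom applied to them produces a genuine distinguished triangle in the filtered derived category, not merely in the underlying derived categories. This strictness is where Saito's \cite{Saito90}*{Section 4}, in particular the construction in 4.4.1, does the real work: it amounts to showing that all the intermediate extensions along the hypersurfaces involved in the definition are pure of the expected weight and that the connecting maps preserve $F$. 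Once strictness is in hand, the compatibility with the constructible-sheaf triangles, together with the fact that the forgetful functor $D^b MHM(X) \to D^b_c(X, \Q)$ reflects distinguished triangles up to the data already fixed, finishes the proof.
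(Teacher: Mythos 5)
The paper does not prove this proposition at all: it is quoted directly from Saito's foundational work, \cite{Saito90}*{4.4.1}, and used as a black box. There is therefore no ``paper's proof'' to compare your sketch against; instead I can only evaluate your outline on its own terms.

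Your sketch correctly identifies the shape of the problem: the triangles are classical at the level of $\Q$-perverse sheaves and regular holonomic $\D$-modules, the four functors for $i$ and $j$ are available in $D^bMHM$ by Theorem~\ref{thm: Saito main}, and the real content is strictness with respect to the Hodge and weight filtrations, with $j_!$ and $j_*$ built via $V$-filtration/nearby-cycle techniques along local equations for $Z$. That is all on target and is indeed the philosophy of Saito's Section~4.

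The one genuine weak point is the final identification step. You take the cone of the adjunction morphism $j_!j^!\M^\bullet \to \M^\bullet$ in $D^bMHM(X)$ and then argue that ``since the forgetful functor is conservative on morphisms in the appropriate sense, this identification is enough to conclude that the cones are the expected $i_*i^*\M^\bullet$.'' Conservativity of the forgetful functor $D^bMHM(X) \to D^b_c(X,\Q)$ reflects isomorphisms but cannot produce a morphism: before you can invoke it you must already have in hand a morphism in $D^bMHM(X)$ between the cone and $i_*i^*\M^\bullet$ that becomes the canonical identification on $\Q$-complexes. Your outline does not supply this morphism, so as written the step is circular. In Saito's treatment this is resolved not by a post-hoc comparison but by arranging the definitions so that the triangles are built in: the cone of $j_!j^*\M^\bullet \to \M^\bullet$ is supported on $Z$, the Hodge-module analogue of Kashiwara's equivalence identifies mixed Hodge modules supported on $Z$ with $MHM(Z)$, and $i^*$ (resp.\ $i^!$) is defined through precisely this cone construction and equivalence. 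So the triangles come out of the construction rather than being verified after the fact, and the conservativity argument you gesture at is never needed. If you want to repair your outline, replace that last step with the Kashiwara-equivalence identification; otherwise the sketch correctly captures the difficulty but does not close the argument.
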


For proofs of the above results and additional properties of $D^bMHM(X)$, see \cite{Saito90}*{Section 4}.

\subsubsection*{\textnormal{\textbf{Du Bois complexes via Saito's mixed Hodge modules}}}

The category of mixed Hodge modules $MHM(pt)$ of a point $pt:=\mathrm{Spec}\,\C$ is equivalent to the category of $\Q$-mixed Hodge structures (see \cite{Saito90}*{Theorem 3.9}). In particular, a point $pt$ has a mixed Hodge module 
$$
\Q_{pt}^H:=(\C, F_\bullet\C,\Q;W)
$$
such that $F_0\C=\C$, $F_{-1}\C=0$, and $W_0\Q=\Q$, $W_{-1}\Q=0$ (i.e. pure of weight $0$). Moreover, we define
$$
\Q^H_X:=a_X^*\Q_{pt}^H\in D^bMHM(X)
$$
where $a_X:X\to pt$. Theorem \ref{thm: Saito main} implies that the underlying $\Q$-complex of $\Q^H_X$ is the constant sheaf $\Q_X$. Consequently, the cohomologies of the direct image $(a_X)_*\Q^H_X\in D^bMHM(pt)$ induce mixed Hodge structures on the ordinary cohomologies of $X$ with rational coefficients. These mixed Hodge structures coincide with Deligne's mixed Hodge structures \cite{Deligne74} by the result of Saito \cite{Saito00}. In the process of proving this result, Saito proved that the Deligne-Du Bois complexes coincide with the graded de Rham complexes of objects in $D^bMHM(X)$. Specifically, we have

\begin{thm}[\cite{Saito00}*{Corollary 0.3, Theorem 4.2}]
\label{thm: Du Bois via MHM}
    Let $Z$ be a closed subvariety of $X$ and $j:X\smallsetminus Z\to X$ be an open embedding. Then, there exists a natural isomorphism
    $$
    \gr^F_0\DR\left(j_!\Q^H_{X\smallsetminus Z}\right)=\DB_{X,Z}^0.
    $$
\end{thm}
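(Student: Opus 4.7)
The plan is to reduce the statement to the absolute case $Z=\emptyset$, namely the identification $\gr^F_0\DR(\Q^H_Y)\isom \DB^0_Y$ for any variety $Y$. Applying the first distinguished triangle in Proposition \ref{prop: Saito triangle} to $\M^\bullet=\Q^H_X$, and using that $j^!\Q^H_X=\Q^H_{X\smallsetminus Z}$ (since $j$ is open) and $i^*\Q^H_X=\Q^H_Z$, I obtain a distinguished triangle
$$
j_!\Q^H_{X\smallsetminus Z}\to \Q^H_X\to i_*\Q^H_Z\xrightarrow{+1}
$$
in $D^bMHM(X)$. Since $\gr^F_0\DR$ is a triangulated functor and commutes with pushforward along the closed embedding $i$, applying it produces a distinguished triangle
$$
\gr^F_0\DR(j_!\Q^H_{X\smallsetminus Z})\to \gr^F_0\DR(\Q^H_X)\to i_*\gr^F_0\DR(\Q^H_Z)\xrightarrow{+1}
$$
in $D^b_{coh}(X,\O_X)$. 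Granting the absolute identification together with its naturality, the second map above is identified with the natural morphism $\DB^0_X\to \DB^0_Z$, so the leftmost term is forced to be canonically isomorphic to $\DB^0_{X,Z}$ by the defining triangle of the latter.

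To prove the absolute case, I would first handle smooth $Y$ of dimension $n$: here $\Q^H_Y$ has underlying filtered $\D_Y$-module $(\w_Y[-n],F)$ with $\gr^F\w_Y$ concentrated in degree $0$, so a direct computation from \eqref{eqn: de Rham complex} and \eqref{eqn: filtration on DR} yields $\gr^F_0\DR(\Q^H_Y)\isom \O_Y=\DB^0_Y$. For singular $Y$, I would pass to a smooth proper hyperresolution $\pi:Y_\bullet\to Y$, by means of which $\DB^0_Y$ is defined as $R\pi_*\O_{Y_\bullet}$. Cohomological descent lifted to $D^bMHM(Y)$ gives $\Q^H_Y\isom R\pi_*\Q^H_{Y_\bullet}$, and combined with the compatibility of $\gr^F_0\DR$ with proper pushforward and the smooth case, this yields
$$
\gr^F_0\DR(\Q^H_Y)\isom R\pi_*\gr^F_0\DR(\Q^H_{Y_\bullet})\isom R\pi_*\O_{Y_\bullet}\isom \DB^0_Y,
$$
and the construction is manifestly functorial in $Y$.

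The main obstacle is establishing cohomological descent $\Q^H_Y\isom R\pi_*\Q^H_{Y_\bullet}$ in $D^bMHM(Y)$. On underlying $\Q$-complexes this is classical Deligne descent, but lifting it to mixed Hodge modules requires the full strength of Saito's theory: the direct image theorem for filtered $\D$-modules underlying mixed Hodge modules, the extension of $D^bMHM$ to simplicial or cubical diagrams together with the totalization construction, and a weight-filtration argument to identify the totalization of $\pi_*\Q^H_{Y_\bullet}$ as a single object in the correct degree. Once this is in place, the naturality of the final isomorphism $\gr^F_0\DR(j_!\Q^H_{X\smallsetminus Z})\isom \DB^0_{X,Z}$ follows from the naturality of each ingredient.
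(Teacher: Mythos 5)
The paper does not prove this theorem: it is cited directly from Saito, and the remark that follows in the text explains Saito's actual route. Saito constructs a comparison functor $\epsilon:D^bMHM(X)\to D^b_{\mathcal H}(X,\Q)_{\mathcal D}$ into a triangulated category of mixed $\Q$-Hodge $\D$-complexes, proves $\epsilon(j_!\Q^H_{X\smallsetminus Z})=\DR_X^{-1}(\mathcal C^H_{X,Z}\Q)$ where $\mathcal C^H_{X,Z}\Q$ is Deligne's relative mixed Hodge complex, and then takes $\gr^F_0\DR$. Your proposal necessarily takes a different route: reduce to the absolute statement $\gr^F_0\DR(\Q^H_Y)\isom\DB^0_Y$ via the $i/j$ adjunction triangle of Proposition \ref{prop: Saito triangle}, verify the smooth case by direct computation, then pass to a smooth proper hyperresolution. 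The formal reduction is correct, and your smooth-case identification $\gr^F_0\DR(\Q^H_Y)\isom\O_Y$ is right.

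You correctly flag where the weight of the argument lies, but it is worth being precise that what you call the ``main obstacle'' is, in substance, the theorem itself. Proving $\Q^H_Y\isom R\pi_*\Q^H_{Y_\bullet}$ in $D^bMHM(Y)$ requires extending the Hodge-module formalism to simplicial or cubical diagrams with a well-defined totalization and then identifying the result with $\Q^H_Y$; proving this \emph{compatibly} in $Y$ (so that the middle arrow of your triangle matches the natural map $\rho:\DB^0_X\to\DB^0_Z$ under the absolute identification) is exactly the functoriality question that Saito's comparison functor $\epsilon$ is designed to settle. There is also a subsidiary concern that matching the middle arrows of two distinguished triangles does not, by itself, produce a canonical isomorphism of the outer terms in a bare triangulated category; one needs an enhancement or an argument at the level of complexes (which Saito supplies via $D^b_{\mathcal H}(X,\Q)_{\mathcal D}$). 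So while your sketch is an honest reformulation that isolates the hard step, it does not yield a proof independent of \cite{Saito00}: the ``cohomological descent lifted to $D^bMHM$'' step remains a black box whose content coincides with the cited result.
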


\begin{rmk}
In fact, Saito constructed the triangulated category of mixed $\Q$-Hodge $\mathcal D$-complexes $D^b_{\mathcal H}(X,\Q)_{\mathcal D}$ and a natural functor 
$$
\epsilon: D^bMHM(X)\to D^b_{\mathcal H}(X,\Q)_{\mathcal D}.
$$
Deligne's mixed $\Q$-Hodge complex, namely $\mathcal C^H_{X,Z}\Q$, with the underlying $\Q$-complex $j_!\Q_{X\smallsetminus Z}$ has an associated object $\DR_X^{-1}\left(\mathcal C^H_{X,Z}\Q\right)\in D^b_{\mathcal H}(X,\Q)_{\mathcal D}$  with a $\D$-module structure, and Saito proves a natural isomorphism
$$
\epsilon\left(j_!\Q^H_{X\smallsetminus Z}\right)=\DR_X^{-1}\left(\mathcal C^H_{X,Z}\Q\right)
$$
in $D^b_{\mathcal H}(X,\Q)_{\mathcal D}$. Theorem \ref{thm: Du Bois via MHM} is an immediate consequence obtained from taking the graded de Rham functor $\gr^F_0\DR$ on both sides. See \cite{Saito00} for details.
\end{rmk}

\subsubsection*{\textnormal{\textbf{Polarizable Hodge modules.}}} The category of mixed Hodge modules $MHM(X)$ admits a subcategory of polarizable Hodge modules of weight $w$, namely $MH(X,w)$. This consists of an object $\M\in MHM(X)$ such that $\gr^W_k\M=0$ for all $k\neq w$. Additionally, we say an object $\M^\bullet\in D^bMHM(X)$ is \textit{pure of weight $w$} if its cohomologies satisfy $\H^q(\M^\bullet)\in MH(X,w+q)$ for all $q$. In such case, we have the following decomposition.

\begin{prop}[\cite{Saito90}*{4.5.4}]
\label{prop: decomposition for pure weight}
If $\M^\bullet\in D^bMHM(X)$ is pure of weight $w$, then there exists a (non-canonical) isomorphism
$$
\M^\bullet\isom \bigoplus_q\H^q(\M^\bullet)[-q].
$$
\end{prop}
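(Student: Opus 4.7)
The plan is to proceed by induction on the amplitude of $\M^\bullet$, namely the length of the smallest interval $[a,b]$ of integers for which $\H^q(\M^\bullet)\neq 0$ for $q\in[a,b]$. The base case is immediate: when $a=b$, the complex is canonically isomorphic to its unique nonzero cohomology object placed in degree $a$.

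For the inductive step, I would consider the canonical truncation distinguished triangle
$$
\H^a(\M^\bullet)[-a]\to\M^\bullet\to\tau_{>a}\M^\bullet\xrightarrow{\partial}\H^a(\M^\bullet)[1-a].
$$
Canonical truncation is a functor on $D^bMHM(X)$, and $\H^q(\tau_{>a}\M^\bullet)=\H^q(\M^\bullet)$ for $q>a$, so $\tau_{>a}\M^\bullet$ is again pure of weight $w$ and has strictly smaller amplitude. The inductive hypothesis then produces an isomorphism
$$
\tau_{>a}\M^\bullet\isom\bigoplus_{q>a}\H^q(\M^\bullet)[-q].
$$
The triangle above splits, yielding the desired decomposition of $\M^\bullet$, precisely when the connecting morphism $\partial$ vanishes. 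Substituting the splitting, this reduces to the vanishing
$$
\mathrm{Ext}^{1+q-a}_{D^bMHM(X)}\bigl(\H^q(\M^\bullet),\,\H^a(\M^\bullet)\bigr)=0\quad\text{for every }q>a.
$$

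The main obstacle, and the only substantive input, is establishing this vanishing. Here $\H^q(\M^\bullet)$ is a polarizable pure Hodge module of weight $w+q$ and $\H^a(\M^\bullet)$ is pure of weight $w+a$, so the weight gap $q-a$ is strictly smaller than the cohomological degree $1+q-a$ in which the vanishing is needed. I would deduce it from the core weight-theoretic input of Saito's theory: morphisms in $MHM(X)$ are strictly compatible with the weight filtration, so there are no non-zero morphisms between pure Hodge modules of different weights; and polarizable pure Hodge modules of a fixed weight form a semisimple abelian category. Combined with the spectral sequence relating $\mathrm{Ext}^\bullet_{D^bMHM(X)}$ to $\mathrm{Ext}^\bullet_{MHM(X)}$, these inputs force each Yoneda extension class in the relevant range to be supported on a mixed Hodge module whose weight filtration is incompatible with the given weight data, and thus to vanish. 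This is the mixed Hodge module incarnation of the classical BBDG weight-vanishing argument and is the genuine hard content of the proposition; the rest of the argument is the categorical bookkeeping above.
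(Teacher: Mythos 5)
The paper supplies no proof of this statement: it is cited verbatim from Saito \cite{Saito90}*{4.5.4}, so there is no in-paper argument to compare against. Your reduction is nevertheless the standard one and, as far as I can tell, is exactly how Saito derives it. Amplitude induction on the canonical truncation triangle reduces everything to the vanishing of the obstruction in $\bigoplus_{q>a}\mathrm{Ext}^{1+q-a}_{D^bMHM(X)}(\H^q(\M^\bullet),\H^a(\M^\bullet))$, which is a special case of the weight inequality for morphisms in $D^bMHM(X)$ stated immediately before the cited result in \cite{Saito90} (4.5.2/4.5.3): if $K$ has weights $\le w$ and $L$ has weights $\ge w+1$, then $\mathrm{Hom}_{D^bMHM(X)}(K,L)=0$. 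Here $\H^q(\M^\bullet)$ is pure of weight $w+q$, so $\H^q(\M^\bullet)[-q]$ is of weights $\le w$, while $\H^a(\M^\bullet)[1-a]$ is of weights $\ge w+1$; the inequality $1+q-a>q-a$ you point out is precisely this. So the skeleton of your argument is correct and complete.

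Two small remarks on the last paragraph of your sketch. First, there is no need to invoke ``the spectral sequence relating $\mathrm{Ext}^\bullet_{D^bMHM(X)}$ to $\mathrm{Ext}^\bullet_{MHM(X)}$'': in Saito's framework $D^bMHM(X)$ is \emph{by definition} the bounded derived category of the abelian category $MHM(X)$, so $\mathrm{Hom}_{D^bMHM(X)}(A,B[i])$ already equals the Yoneda group $\mathrm{Ext}^i_{MHM(X)}(A,B)$; you may be importing an extra layer from the constructible setting, where $D^b_c$ is genuinely not the derived category of perverse sheaves. Second, the passage from ``strictness of morphisms for the weight filtration'' and ``semisimplicity of $MH(X,w)$'' to the higher $\mathrm{Ext}$-vanishing is not as automatic as your wording suggests. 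For $\mathrm{Ext}^1$ it is a direct weight-filtration argument on a short exact sequence (and gives the correct range $m\le n$), but for $\mathrm{Ext}^i$ with $i\ge2$ one must run an honest induction on the weight amplitude of the middle terms of a Yoneda $i$-extension; simply observing that some graded piece would sit in a forbidden weight slot is not by itself a proof. You are right, though, that this weight bound is the genuine content and that the rest is categorical bookkeeping.
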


To explain the notion of polarization of weight $w$, we first recall the Tate twist of mixed Hodge modules. Continuing with \eqref{eqn: MHM structure}, given an integer $k$ and $\M\in MHM(X)$, the Tate twist $\M(k)\in MHM(X)$ consists of the following structure
$$
(M,F_{\bullet-k}M,K(k); W_{\bullet+2k}),
$$
where $K(k)=K\tensor_\Q\Q(k)$ and $\Q(k)=(2\pi i)^k\Q\subset \C$, the usual Tate twist for $\Q$-complexes.

A polarizable Hodge module $\M\in MH(X,w)$ of weight $w$ admits a polarization \cite{Saito88}*{5.2.10}, which consists of an isomorphism $\M(w)\isom \dual \M$ or equivalently
$$
(M,F_{\bullet-w} M, K(w))\isom \dual(M,F_\bullet M, K).
$$
See \cite{Saito88}*{2.4} for the definition of the duality functor $\dual$ on the derived category of filtered $\D$-modules, which induces the duality functor on $D^bMHM(X)$.

For example, let $\W$ be a smooth variety and $(\mathbb V, F^\bullet)$ be an irreducible polarizable variation of $\Q$-Hodge structures of weight $w-\dim \W$ on $\W$. Here, $\mathbb V$ is a $\Q$-local system and $F^\bullet$ is a decreasing Hodge filtration. Let $V$ be the corresponding (right) $\D_\W$-module of $\mathbb V$ with the increasing Hodge filtration $F_\bullet V$. Then
$$
(V, F_\bullet V, \mathbb V[\dim \W])
$$
is an irreducible polarizable Hodge module of weight $w$. See \cite{Saito88}*{Th\'eor\`eme 5.4.3} for the proof.

In general, the category $MH(X,w)$ of polarizable Hodge modules of weight $w$ on $X$ is semisimple. The simple objects of $MH(X,w)$ are \textit{minimal extensions} $\IC^H_Z(V_i)$ (see e.g. \cite{Saito88}*{Lemme 5.1.10}) of irreducible polarizable variations of $\Q$-Hodge structures $\mathbb V_i$ of weight $w-\dim Z$ defined over some smooth Zariski open subset of an irreducible subvariety $Z\subset X$. In other words, $\M\in MH(X,w)$ admits the decomposition
$$
\M=\bigoplus_{Z\subset X}\bigoplus_{i}\IC^H_Z(\mathbb V_i)
$$
where $Z\subset X$ ranges over the set of irreducible subvarieties of $X$. We say $Z$ is the \textit{strict support} of $\IC^H_Z(\mathbb V_i)$. In particular, every irreducible variety $X$ has the polarizable Hodge module $\IC^H_X$ of weight $\dim X$ associated to the intersection complex $\IC_X$ of $X$.

We end this section with Saito's theorem that generalizes Beilinson-Bernstein-Deligne-Gabber's Decomposition Theorem \cite{BBD}. This is one of the most important and useful properties of pure Hodge modules.

\begin{thm}[Saito's Decomposition Theorem]
\label{thm: Saito's decomposition theorem}
Let $f:X\to Y$ be a proper morphism of varieties. If $\M^\bullet\in D^bMHM(X)$ is pure of weight $w$, then $f_*\M^\bullet\in D^bMHM(Y)$ is pure of weight $w$, hence admits a non-canonical isomorphism
$$
f_*\M^\bullet\isom \bigoplus_q\H^q(f_*\M^\bullet)[-q].
$$
\end{thm}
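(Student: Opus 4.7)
The plan is to split the statement into two claims: (a) a stability statement, namely that $f_*\M^\bullet$ remains pure of weight $w$, and (b) a decomposition statement, namely that any pure complex splits as the sum of its shifted cohomologies. Claim (b) is precisely Proposition \ref{prop: decomposition for pure weight}, so the bulk of the work is in (a). Assuming (a), I would finish as follows: applying Proposition \ref{prop: decomposition for pure weight} to $\M^\bullet$ itself, write $\M^\bullet\isom\bigoplus_q \H^q(\M^\bullet)[-q]$ with $\H^q(\M^\bullet)\in MH(X,w+q)$; then $f_*\M^\bullet\isom\bigoplus_q f_*\H^q(\M^\bullet)[-q]$, and after proving that each $f_*\H^q(\M^\bullet)$ is pure of weight $w+q$, a second application of Proposition \ref{prop: decomposition for pure weight} to each summand yields the stated non-canonical splitting.

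For the stability claim (a), I would reduce to the case of a single polarizable Hodge module $\M\in MH(X,w)$ sitting in degree $0$, and prove that $\H^q(f_*\M)$ lies in $MH(Y,w+q)$ for every $q$. The strategy is to first handle projective morphisms and then reduce the general proper case to the projective one. For projective $f$, the central input is the relative hard Lefschetz theorem for pure Hodge modules: if $\ell$ is the first Chern class of a relatively ample line bundle on $X$, then cupping with $\ell^q$ induces isomorphisms $\H^{-q}(f_*\M)\isom \H^q(f_*\M)(q)$ in $MHM(Y)$. Combined with the strict support decomposition of polarizable Hodge modules and induction on the relative dimension (using a suitable hyperplane section to peel off one dimension at a time and Deligne's weight-filtration machinery to control extensions), this yields that each $\H^q(f_*\M)$ is a polarizable Hodge module of weight $w+q$, with polarization induced by that of $\M$ and $\ell$.

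To pass from projective to proper, I would invoke Chow's lemma to produce a projective birational modification $g:X'\to X$ with $f\circ g$ projective. The strict support decomposition lets one realize $\M$ as a direct summand (up to lower-weight corrections controlled by the weight filtration) of $g_*g^*\M$, after which stability of purity for $f_*\M$ follows from the already-established projective case applied to $(f\circ g)_*g^*\M$, together with the closure of the subcategory of pure objects of weight $w$ under direct summands.

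The main obstacle is undoubtedly the projective case, specifically the relative hard Lefschetz theorem in the Hodge module setting; this is the technical heart of Saito \cite{Saito88} and requires the full strength of the inductive construction of polarizable Hodge modules via nearby and vanishing cycles, together with Saito's formalism of polarizations on the graded pieces of the weight filtration. Once this input is granted, the rest of the argument, including the Chow's lemma reduction and the final decomposition step via Proposition \ref{prop: decomposition for pure weight}, is comparatively formal.
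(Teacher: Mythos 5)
The paper does not prove this theorem; it is stated and cited as a black-box foundational result of Saito (\cite{Saito88}, \cite{Saito90}), so there is no proof in the paper against which to compare. Your proposal is therefore a reconstruction of Saito's argument. The overall roadmap is the right one: split into stability of purity under $f_*$ plus the formal decomposition supplied by Proposition~\ref{prop: decomposition for pure weight}, handle the projective case via relative hard Lefschetz and the strict support decomposition, and reduce proper to projective via Chow's lemma.

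There is, however, a genuine problem in the Chow's lemma step as you have written it. You propose to realize $\M$ as a direct summand of $g_*g^*\M$ ``up to lower-weight corrections'' and then apply the already-established projective case to $(f\circ g)_*g^*\M$. This does not work, because $g^*\M$ is not a pure Hodge module: pullback does not preserve purity (it only bounds weights from above), and $g^*\M$ is in general an honest complex with mixed cohomologies. The projective case you have at that point of the argument applies only to pure inputs, so you cannot conclude that $(f\circ g)_*g^*\M$ is pure, and the ``lower-weight corrections'' do not obviously vanish or split off in the way the phrasing suggests. The correct object to push forward is not $g^*\M$ but the minimal (intermediate) extension on $X'$ of the polarizable variation of Hodge structure underlying $\M$ over a dense open subset of its strict support. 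Concretely, by the strict support decomposition one reduces to a simple $\M=\IC^H_Z(\mathbb V)$; taking $Z'$ to be the strict transform of $Z$ in $X'$ and $\M':=\IC^H_{Z'}(g^*\mathbb V)$, one has $\M'$ pure of weight $w$, and by the projective case $g_*\M'$ is pure with $\M$ as a direct summand (they agree over the locus where $g$ is an isomorphism). Then $f_*\M$ is a direct summand of $(f\circ g)_*\M'$, which is pure by the projective case applied to the pure module $\M'$, and purity passes to direct summands. This fixes the reduction; the rest of your outline (relative hard Lefschetz for the projective case, the final appeal to Proposition~\ref{prop: decomposition for pure weight}) is sound as a high-level description of Saito's proof.
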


\section{Graded de Rham complexes of mixed Hodge modules}
\label{sec: graded de Rham}

The proofs of the main results rely on an in-depth understanding of the graded de Rham functor $\gr^F_p\DR$. In preparation, we study its various properties starting with the commutativity of the direct image functor $f_*$ and the graded de Rham functor  under a proper morphism $f$:

\begin{prop}
\label{prop: commutativity of graded de Rham}
If $f:X\to Y$ is a proper morphism, then we have a natural isomorphism of functors
$$
\gr^F_p\DR\circ f_*=Rf_*\circ\gr^F_p\DR,
$$
or equivalently, the following diagram commutes.
\begin{displaymath}
\xymatrix{
{D^bMHM(X)}\ar[r]^{\gr^F_p\DR}\ar[d]_{f_*}& {D^b_{coh}(X,\O_X)}\ar[d]^{Rf_*}\\
{D^bMHM(Y)}\ar[r]^{\gr^F_p\DR}&{D^b_{coh}(Y,\O_Y)}
}
\end{displaymath}
\end{prop}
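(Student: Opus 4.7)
The plan is to construct the natural isomorphism on objects (extending to morphisms and complexes by functoriality) via a direct computation on filtered $\D$-modules, with Saito's strictness theorem as the key input that makes the exchange of $\gr^F_p$ and $Rf_*$ possible.

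For the setup, choose closed embeddings $X \hookrightarrow W_X$ and $Y \hookrightarrow W_Y$ into smooth varieties, and factor $f$ as the graph embedding $\Gamma_f: X \hookrightarrow W_X \times W_Y$ followed by projection to $W_Y$. Closed embeddings are handled essentially for free: Kashiwara's equivalence identifies filtered $\D$-modules on $W_X \times W_Y$ supported on $X$ with filtered $\D$-modules on any smaller ambient smooth variety containing $X$, and $\gr^F_p \DR$ is independent of the ambient smooth variety by \cite{Saito90}*{Proposition 2.33}. Thus it remains to treat the proper morphism of smooth varieties obtained from the projection.

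In that case, for $(M, F_\bullet M)$ underlying a mixed Hodge module $\M$, Saito defines
$$
f_+ (M, F_\bullet M) = Rf_* \bigl( (M, F_\bullet M) \otimes^L_{(\D_X, F_\bullet \D_X)} (\D_{X \to Y}, F_\bullet \D_{X \to Y}) \bigr),
$$
with the standard filtration on the transfer bimodule $\D_{X \to Y}$. Unwinding the Spencer resolution used to define $\DR_Y$, taking $\gr^F_p$, and applying the projection formula identifies the underlying filtered object of $f_+ \M$ with a complex whose graded de Rham complex on $Y$ equals $Rf_* \gr^F_p \DR_X(\M)$ already at the level of filtered complexes of quasi-coherent sheaves. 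All identifications in this step are functorial in $\M$, so a natural transformation between the two composites in question is assembled.

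The main obstacle, and the one step that uses something nontrivial, is exchanging $\gr^F_p$ with the hypercohomology $Rf_*$. This is the content of Saito's strictness theorem for the Hodge filtration under proper direct image \cite{Saito88}*{Th\'eor\`eme 5.3.1}: for a polarizable Hodge module the Hodge filtration on $f_+\M$ is strict, so that $R^i f_*$ commutes with $\gr^F_p$ on the filtered relative de Rham complex. Extending from pure objects to arbitrary objects of $D^bMHM(X)$ proceeds by an inductive argument along the weight filtration, combined with the distinguished triangles of Proposition \ref{prop: Saito triangle} and the five lemma in the derived category. Strictness then promotes the objectwise identification of the previous paragraph to the desired natural isomorphism $\gr^F_p \DR \circ f_* \cong Rf_* \circ \gr^F_p \DR$ of functors $D^bMHM(X) \to D^b_{coh}(Y, \O_Y)$.
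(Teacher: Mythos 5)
Your proposal is correct in spirit---Saito's strictness of the Hodge filtration under proper pushforward is indeed the nontrivial ingredient---but it takes a considerably longer route than the paper and contains a few inaccuracies worth flagging. The paper's proof is essentially a citation: it invokes \cite{Saito00}*{Proposition 2.8}, and as an alternative uses Beilinson's resolution argument to replace $\M^\bullet$ by a quasi-isomorphic complex of $f_*$-acyclic mixed Hodge modules, after which the commutativity reduces to the single-module statement \cite{Saito88}*{2.3.7}. Your proposal instead unwinds everything through the graph factorization, Kashiwara equivalence, the transfer bimodule, and the filtered Spencer/de~Rham complex; this is the content behind Saito's citations, so it is a legitimate from-scratch approach, just heavier.

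Two points need correction. First, you state strictness only for polarizable (pure) Hodge modules and then propose extending to mixed ones by induction on the weight filtration; but strictness of the filtered direct image under a proper morphism is part of the definition of the category $MHM$ (it is one of Saito's stability conditions, see \cite{Saito90}), so no such induction is needed, and indeed the reference you would want for the mixed case is in \cite{Saito90}, not \cite{Saito88}*{5.3.1} which is the pure case. Second, your appeal to Proposition~\ref{prop: Saito triangle} is misplaced: those are the open/closed distinguished triangles $j_!j^!\to \mathrm{id}\to i_*i^*$, which have nothing to do with peeling off weight-graded pieces; for a weight induction one uses the short exact sequences $0\to W_{w-1}\M\to W_w\M\to\gr^W_w\M\to 0$ in the abelian category $MHM(X)$. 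Finally, passing from single modules to $D^bMHM(X)$ is not automatic ``by functoriality'': the direct image $f_*$ is defined on the derived category via an adapted resolution, and making the objectwise identification coherent for complexes is precisely what Beilinson's argument (as used in the paper) accomplishes. Your overall strategy would work once these steps are repaired, but the paper's proof shows there is a much shorter path available.
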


\begin{proof}
This is an immediate consequence of \cite{Saito00}*{Proposition 2.8}. Alternatively, this follows from the construction of $f_*$: by Beilinson's argument in \cite{Beilinson87}, an object $\M^\bullet\in D^bMHM(X)$ is isomorphic to the complex of mixed Hodge modules such that each component is $f_*$-acyclic. Assume this is the case. Then, $f_*\M^\bullet$ is isomorphic to the complex of mixed Hodge modules whose $q$-th component is $\H^0(f_*\M^q)$. Consequently from \cite{Saito88}*{2.3.7}, we have a natural isomorphism
$$
\gr^F_p\DR( f_*\M^\bullet)=Rf_*\left(\gr^F_p\DR(\M^\bullet)\right).
$$
This completes the proof.
\end{proof}

The dualizing functor $\dual$ combined with the graded de Rham functor on $D^bMHM(X)$ induces the coherent duality in $D^b_{coh}(X,\O_X)$:

\begin{lem}
\label{lem: duality}
Let $X$ be an embeddable variety and $\M^\bullet\in D^bMHM(X)$. Then, for every integer $p$, we have an isomorphism
$$
R\Hom_{\O_X}\left(\gr^F_p\DR(\M^\bullet),\omega_X^\bullet\right)\isom\gr^F_{-p}\DR(\mathbb \dual(\M^\bullet))
$$
in $D^b_{coh}(X,\O_X)$, where $\omega^\bullet_X$ is the dualizing complex of $X$.
\end{lem}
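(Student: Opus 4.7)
The plan is to reduce the statement to Saito's compatibility between the filtered dualizing functor on $\D$-modules and Grothendieck-Serre duality at the level of graded pieces. By a standard devissage — both $R\Hom_{\O_X}(\gr^F_p\DR(-),\w^\bullet_X)$ and $\gr^F_{-p}\DR\circ\dual$ are way-out exact functors that send distinguished triangles in $D^bMHM(X)$ to distinguished triangles in $D^b_{coh}(X,\O_X)$ — it suffices to prove the statement for a single mixed Hodge module $\M^\bullet = \M \in MHM(X)$.

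Fix a closed embedding $i:X\hookrightarrow\W$ into a smooth variety $\W$ of dimension $n$ (which exists by embeddability), so that $\w^\bullet_X=i^!\w_\W[n]$, and let $(M,F_\bullet M)$ be the underlying filtered right $\D_\W$-module of $\M$. By \eqref{eqn: filtration on DR},
$$
\gr^F_p\DR_\W(M)=\bigl[\gr^F_{p-n}M\tensor_{\O_\W}\wedge^n T_\W\to\cdots\to\gr^F_p M\bigr]
$$
is exactly the $p$-th graded piece of the pushforward along $\pi:T^*\W\to\W$ of a Koszul-type complex on the cotangent bundle, built from the coherent sheaf on $T^*\W$ associated to $\gr^F M$ (via $\gr^F\D_\W\isom\pi_*\O_{T^*\W}$) together with the tautological section of $\pi^*\Omega^1_\W$.

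The key input is Saito's construction of the filtered dual $(\dual M, F_\bullet \dual M)$ in \cite{Saito88}*{Section 2.4}: on Rees modules, $\dual$ is given, up to appropriate shifts, by $R\Hom_{R_F\D_\W}(R_F M, R_F\D_\W)[n]\tensor R_F\w_\W^{-1}$, and Saito proves that this construction is compatible with coherent duality after taking graded pieces. Combined with the Koszul description above, and applying Grothendieck-Serre duality for the smooth vector-bundle morphism $\pi:T^*\W\to\W$ of relative dimension $n$, one obtains a natural isomorphism
$$
\gr^F_{-p}\DR_\W(\dual M)\isom R\Hom_{\O_\W}\bigl(\gr^F_p\DR_\W(M),\w_\W[n]\bigr)
$$
in $D^b_{coh}(\W,\O_\W)$. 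Since both sides are supported on $X$, restricting along $i$ and using $\w^\bullet_X=i^!\w_\W[n]$ produces the desired isomorphism in $D^b_{coh}(X,\O_X)$; independence of the chosen embedding follows from the corresponding independence statement for $\gr^F_p\DR$.

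The main obstacle is the precise identification of Saito's filtered $\D$-module duality with graded coherent duality through the Rees/Koszul dictionary: one has to carefully track filtration shifts, the twist by $\w_\W$ arising from the left–right $\D$-module conversion, and the index flip $p\leftrightarrow -p$ that comes from the cohomological shift $[n]$ built into the definition of $\dual$. Once this bookkeeping is performed, the statement follows componentwise from Serre duality on $T^*\W$ applied to the Koszul resolution of $\gr^F M$.
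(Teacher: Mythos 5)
Your proposal is correct and follows essentially the same path as the paper: choose an embedding $X\subset \W$, invoke Saito's filtered-duality construction from \cite{Saito88}*{Section 2.4} to obtain the isomorphism $R\Hom_{\O_\W}(\gr^F_p\DR(\M^\bullet),\omega_\W^\bullet)\isom\gr^F_{-p}\DR(\dual\M^\bullet)$, and then convert $R\Hom_{\O_\W}$ to $R\Hom_{\O_X}$ by Grothendieck duality for the closed embedding. The paper simply cites Saito for the key isomorphism, whereas you unpack its mechanism (Rees modules, the Koszul description of $\gr^F\DR$, and Serre duality on $T^*\W$); this is faithful to how Saito's proof actually goes, but is exposition of the cited input rather than a different argument.
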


\begin{proof}
Although not stated explicitly, this is proved in \cite{Saito88}*{Section 2.4} and used in the proof of \cite{Saito00}*{Proposition 5.3}. Suppose we have an embedding $X\subset \W$ into a smooth variety. From the definition of the dual functor, we have a natural isomorphism
$$ R\Hom_{\O_\W}\left(\gr^F_p\DR(\M^\bullet),\omega_\W^\bullet\right)\isom\gr^F_{-p}\DR(\mathbb \dual(\M^\bullet))
$$
in $D^b_{coh}(X,\O_X)$, and the left hand side is isomorphic to $R\Hom_{\O_X}\left(\gr^F_p\DR(\M^\bullet),\omega_X^\bullet\right)$ by Grothendieck duality.
\end{proof}


In particular, for a polarizable Hodge module $\IC_X^H\in MH(X,n)$ of weight $n=\dim X$ associated to the intersection complex of $X$, we have the polarization $\IC_X^H(n)\isom \dual \IC_X^H$. This induces the duality
\begin{equation}
\label{eqn: duality for IC}
R\Hom_{\O_X}\left(\gr^F_p\DR\left(\IC^H_X\right),\omega_X^\bullet\right)\isom\gr^F_{-p-n}\DR\left(\IC^H_X\right).
\end{equation}
See also \cite{KS21}*{Proposition 4.4}.

For a nonzero object $\M^\bullet\in D^bMHM(X)$, we define the \textit{index of the first nonzero term in the Hodge filtration} of $\M^\bullet$ as the minimal $p$ such that $\gr^F_p\DR(\M^\bullet)\neq0$. Indeed, when $\M\in MHM(X)$ and $X$ is smooth, it is immediate from the definition of the graded de Rham functor that the index $k$ of the first nonzero term in the Hodge filtration of $\M$ is the minimal $p$ such that $F_p\M\neq 0$, and in such case we have
$$
F_k\M\isom \gr^F_k\DR(\M)
$$
in $D^b_{coh}(X,\O_X)$. Since the graded de Rham functor is independent of the choice of an embedding of $X$, the \textit{first nonzero term in the Hodge filtration} $F_k\M$ of a mixed Hodge module $\M\in MHM(X)$ is well-defined even when $X$ is an embeddable singular variety. The following lemma (most likely known to experts) explains that under the direct image functors of the derived category of mixed Hodge modules, the index of the first nonzero term in the Hodge filtration does not decrease.

\begin{lem}
\label{lem: first HF}
    Let $f:X\to Y$ be a morphism of embeddable varieties and let $\M^\bullet\in D^bMHM(X)$ be an object in the derived category of mixed Hodge modules on $X$. Suppose
    $$
    \gr^F_p\DR(\M^\bullet)=0, \quad \forall p<k,
    $$
    for some integer $k$. Then,
    $$
    \gr^F_p\DR(f_*\M^\bullet)=0 \;\; \mathrm{and} \;\; \gr^F_p\DR(f_!\M^\bullet)=0, \quad \forall p<k.
    $$
\end{lem}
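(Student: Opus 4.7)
The plan is: factor $f$ via Nagata compactification into an open embedding followed by a proper morphism, handle the proper piece using Proposition \ref{prop: commutativity of graded de Rham}, and reduce the open embedding piece to a local computation via Saito's $V$-filtration.

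First, I would factor $f = \bar f \circ j$ with $j \colon X \hookrightarrow \bar X$ an open embedding and $\bar f \colon \bar X \to Y$ proper; since $X$ and $Y$ are embeddable, we may arrange $\bar X$ to be embeddable (e.g.\ by compactifying the graph of $f$ inside $\bar W_X \times Y$ for a projective compactification $\bar W_X$ of an ambient smooth variety of $X$). Then $f_* \M^\bullet = \bar f_* (j_* \M^\bullet)$ and $f_! \M^\bullet = \bar f_! (j_! \M^\bullet) = \bar f_* (j_! \M^\bullet)$, the last equality because $\bar f$ is proper. Proposition \ref{prop: commutativity of graded de Rham} gives $\gr^F_p\DR \circ \bar f_* = R\bar f_* \circ \gr^F_p\DR$, so it suffices to show that $\gr^F_p\DR(j_* \M^\bullet)$ and $\gr^F_p\DR(j_! \M^\bullet)$ vanish for $p < k$.

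For the open embedding $j$, I would take a log resolution $\mu \colon (\widetilde{\bar X}, E) \to (\bar X, \bar X \smallsetminus X)$ and use the proper case for $\mu$ to reduce to the open embedding $\tilde j \colon X \hookrightarrow \widetilde{\bar X}$, whose complement $E$ is a simple normal crossing divisor on the smooth variety $\widetilde{\bar X}$. Writing $\tilde j$ as a composition of open embeddings each with smooth divisor complement reduces further to the local case $j \colon W \smallsetminus D \hookrightarrow W$ with $W$ smooth and $D = \{g=0\}$ a smooth principal divisor. For a single mixed Hodge module $\M$ on $W \smallsetminus D$ with $F_{k-1}\M = 0$, Saito's construction of $j_* \M$ and $j_! \M$ via the $V$-filtration of $\D_W$ along $g$ extends the Hodge filtration so that the first nonzero step remains at index $\ge k$; since $W$ is smooth, this yields $\gr^F_p\DR(j_* \M) = \gr^F_p\DR(j_! \M) = 0$ for $p < k$. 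A general object $\M^\bullet \in D^bMHM(X)$ is handled by representing it (as in the proof of Proposition \ref{prop: commutativity of graded de Rham}) by a complex of $j_*$- or $j_!$-acyclic mixed Hodge modules and applying the single-module computation termwise, using strictness of the Hodge filtration to pass between the complex and its cohomology modules.

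The main obstacle is the local analysis in the smooth divisor case: one must unpack Saito's $V$-filtration construction of $j_*, j_!$ and verify that the nearby and vanishing cycles along $D$ do not introduce Hodge filtration steps below the cutoff index $k$. A symmetric alternative is to prove the statement only for $f_*$ and derive the $f_!$ case from Verdier duality via Lemma \ref{lem: duality}; however, duality swaps vanishing below an index with vanishing above, so this route requires establishing both the ``lower'' and the ``upper'' Hodge bound preservations for $f_*$ by essentially the same argument.
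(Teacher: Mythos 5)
Your overall plan matches the paper's — factor $f$ through a Nagata compactification into an open embedding followed by a proper map, handle the proper piece via Proposition \ref{prop: commutativity of graded de Rham}, and settle the open piece via Saito's $V$-filtration on a single mixed Hodge module — so the skeleton and the core input are the same. But there are two places where your reduction diverges from the paper's, and one of them is a genuine gap.

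First, the reduction to a smooth-divisor model. You propose a log resolution $\mu\colon(\widetilde{\bar X},E)\to(\bar X,\bar X\smallsetminus X)$ and then ``reduce to the open embedding $\tilde j\colon X\hookrightarrow\widetilde{\bar X}$.'' For $X$ to sit inside $\widetilde{\bar X}$ you need $\mu$ to be an isomorphism over $X$, which Hironaka's theorem does not give you when $X$ itself is singular: making the boundary snc in general requires modifying $X$ as well. The paper sidesteps this by blowing up to make $\bar X\smallsetminus X$ Cartier, localizing so it is cut out by a single function $h$ on a smooth ambient $V$, and then using the graph embedding $V\hookrightarrow V\times\C$ to reduce to the open embedding $V\times\C^*\hookrightarrow V\times\C$. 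This produces a smooth divisor on a smooth ambient variety without touching $X$. You could salvage your route by log-resolving the ambient smooth space $\W$ (where the filtered $\D$-modules actually live) rather than $\bar X$, but as written the reduction does not quite go through for singular $X$.

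Second, and more seriously: the passage from a single module to a general object $\M^\bullet\in D^bMHM(X)$. You propose to represent $\M^\bullet$ by a complex of $j_*$- or $j_!$-acyclic mixed Hodge modules and apply the single-module computation termwise. The hypothesis $\gr^F_p\DR(\M^\bullet)=0$ for $p<k$ is a statement about the total complex and does not descend to the individual terms of an acyclic resolution: those terms can perfectly well have nonzero $\gr^F_p\DR$ for $p<k$ with cancellation in the total complex, and appealing to ``strictness of the Hodge filtration'' does not fix this. The paper instead inducts on the number of nonzero cohomology objects of $\M^\bullet$. From the triangle $\tau_{<q}\M^\bullet\to\M^\bullet\to\H^q(\M^\bullet)[-q]\xrightarrow{+1}$ and a degree count on the associated graded de Rham complexes, one first shows that $\H^0(\gr^F_p\DR(\H^q(\M^\bullet)))=0$ for $p<k$, deduces from this that the first nonzero Hodge index of the single module $\H^q(\M^\bullet)$ is at least $k$, hence $\gr^F_p\DR$ vanishes for $p<k$ for both $\H^q(\M^\bullet)$ and $\tau_{<q}\M^\bullet$, and then applies the induction hypothesis to both. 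This is the step your sketch is missing, and it is not a formality.

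Your closing observation — that deriving the $f_!$ case from $f_*$ via duality swaps lower and upper Hodge bounds and so is not a free shortcut — is correct, and is one reason the paper treats $j_*$ and $j_!$ symmetrically through the $V$-filtration.
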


\begin{proof}
When $f$ is proper, this is immediate from Proposition \ref{prop: commutativity of graded de Rham}, that
$$
\gr^F_p\DR(f_*\M^\bullet)\isom Rf_*\left(\gr^F_p\DR(\M^\bullet)\right),
$$
and from the identity $f_!=f_*$ \cite{Saito90}*{4.3.3}. Notice that every morphism is a composition of an open embedding and a proper morphism. Hence, it suffices to prove this lemma when $f$ is an open embedding.

We first prove this for a single mixed Hodge module $\M\in MHM(X)$. By blowing up, we may reduce to an open embedding $f:X\hookrightarrow V$ where $X$ is the complement of a Cartier divisor $D\subset V$ and $V$ is embeddable. Since the statements are local, we can assume that $D$ is defined by a function $h:V\to \C$ and $V$ is smooth. Consider an embedding $(id,g):V\hookrightarrow V\times \C$. Then it suffices to prove when $\M$ is a mixed Hodge module on $V\times \C^*$ and $f:V\times \C^*\hookrightarrow V\times \C$. Notice that in this case, both $j_*\M$ and $j_!\M$ are mixed Hodge modules on $V\times \C$ and their Hodge filtrations $F$ are determined via their Kashiwara-Malgrange $V$-filtrations along $V\times \left\{0\right\}$. Hence, if $F_p\M=0$ for all $p<k$, then $F_p(j_*\M)=F_p(j_!\M)=0$ for all $p<k$ by \cite{Saito88}*{Proposition 3.2.2, Remarque 3.2.3, Lemma 5.1.17}. This proves the lemma for a mixed Hodge module $\M$.

Now for a nonzero object $\M^\bullet\in D^bMHM(X)$, we proceed by induction on the number of nonzero cohomologies of $\M^\bullet$. Suppose $\H^q(\M^\bullet)\neq 0$ and $\H^{>q}(\M^\bullet)=0$. Consider the following distinguished triangle
\begin{equation}
\label{eqn: truncation}
    \tau_{<q}\M^\bullet\to \M^\bullet\to \H^q(\M^\bullet)[-q]\xrightarrow{+1}.
\end{equation}
The cohomologies of $\tau_{<q}\M^\bullet$ are supported on degrees less than $q$, so the cohomologies of $\gr^F_p\DR(\tau_{<q}\M^\bullet)$ are also supported on degrees less than $q$. Hence, from the distinguished triangle
\begin{equation}
\label{eqn: truncation2}
\gr^F_p\DR(\tau_{<q}\M^\bullet)\to \gr^F_p\DR(\M^\bullet)\to \gr^F_p\DR(\H^q(\M^\bullet))[-q]\xrightarrow{+1},
\end{equation}
we have $\H^0(\gr^F_p\DR(\H^q(\M^\bullet)))=0$ for all $p<k$. Therefore, the index of the first nonzero term in the Hodge filtration of $\H^q(\M^\bullet)$ is at least $k$, from which we have
$$
\gr^F_p\DR(\tau_{<q}\M^\bullet)=\gr^F_p\DR(\H^q(\M^\bullet))=0$$
for all $p<k$. Consequently, we take the direct images of \eqref{eqn: truncation} and apply the induction hypothesis on $\tau_{<q}\M^\bullet$ and $\H^q(\M^\bullet)$ to obtain the conclusion.
\end{proof}

Additionally, given an object in $D^bMHM(X)$, we compare its graded de Rham complex with that of its first nonzero cohomology. This will later allow us to use a single mixed Hodge module to understand the graded de Rham complex of an object in $D^bMHM(X)$.

\begin{lem}
\label{lem: graded de Rham of first cohomology}
Let $X$ be an embeddable variety and $\M^\bullet\in D^bMHM(X)$ such that $\H^0(\M^\bullet)=\N\in MHM(X)$ and all the cohomologies in negative degrees are zero, that is $\H^{<0}(\M^\bullet)=0$. Suppose
$$
\gr^F_p\DR(\M^\bullet)=0, \quad \forall p<k,
$$
for some integer $k$. Then $\gr^F_{p}\DR\left(\N\right)=0$ for all $p<k$, and there exists an isomorphism
$$
\H^{k-p}\left(\gr^F_{p}\DR\left(\M^\bullet\right)\right)\isom \H^{k-p}\left(\gr^F_{p}\DR\left(\N\right)\right).
$$
for all $p\ge k$.
\end{lem}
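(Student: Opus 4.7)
The plan is to run an induction on the cohomological amplitude of $\M^\bullet$, using the canonical top-truncation triangle
$$
\tau_{<M}\M^\bullet\to\M^\bullet\to\H^M(\M^\bullet)[-M]\xrightarrow{+1},
$$
where $M$ is the largest index with $\H^M(\M^\bullet)\neq 0$. The base case $M=0$ is immediate since $\M^\bullet\simeq\N$, so the task reduces to passing from $\M^\bullet$ to the shorter complex $\tau_{<M}\M^\bullet$, which still satisfies $\H^0(\tau_{<M}\M^\bullet)=\N$ and $\H^{<0}(\tau_{<M}\M^\bullet)=0$.

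For part (a), applying $\gr^F_p\DR$ to the triangle and using the vanishing of $\gr^F_p\DR(\M^\bullet)$ for $p<k$ produces a quasi-isomorphism
$$
\gr^F_p\DR(\tau_{<M}\M^\bullet)\simeq\gr^F_p\DR(\H^M(\M^\bullet))[-M-1]\quad(p<k).
$$
Since $\tau_{<M}\M^\bullet$ has mixed Hodge module cohomologies supported in degrees strictly less than $M$, an iterated application of the same truncation principle shows that its graded de Rham lies in $D^{\leq M-1}_{coh}$. Matching amplitudes on the two sides of the quasi-isomorphism forces $\H^0(\gr^F_p\DR(\H^M(\M^\bullet)))=0$ for $p<k$, and the characterization of the first nonzero term in the Hodge filtration of a single mixed Hodge module (the discussion preceding the lemma) promotes this to $\gr^F_p\DR(\H^M(\M^\bullet))=0$ for $p<k$. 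Hence $\gr^F_p\DR(\tau_{<M}\M^\bullet)=0$ for $p<k$, and the induction hypothesis applied to $\tau_{<M}\M^\bullet$ completes part (a). A byproduct is that each $\H^q(\M^\bullet)$ has first nonzero Hodge-filtration index at least $k$.

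For part (b), take the long exact sequence associated to the same triangle at Hodge index $p\geq k$ and at cohomological degree $k-p$. The contributions from the third term $\H^M(\M^\bullet)[-M]$ are $\H^{k-p-M}$ and $\H^{k-p-M-1}$ of $\gr^F_p\DR(\H^M(\M^\bullet))$; by the byproduct of part (a), the first nonzero Hodge-filtration index $k_M$ of $\H^M(\M^\bullet)$ satisfies $k_M\geq k$, and the characterization preceding the lemma gives $\gr^F_p\DR(\H^M(\M^\bullet))=0$ for $p<k_M$ and concentration in cohomological degree $0$ at $p=k_M$. Combined with the bound $k-p-M\leq -M<0$ for $p\geq k$, these two contributions vanish in the relevant range, and the long exact sequence collapses to an isomorphism $\H^{k-p}(\gr^F_p\DR(\tau_{<M}\M^\bullet))\isom\H^{k-p}(\gr^F_p\DR(\M^\bullet))$. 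The inductive hypothesis on $\tau_{<M}\M^\bullet$ then yields the desired identification with $\H^{k-p}(\gr^F_p\DR(\N))$.

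The principal obstacle is the vanishing step in part (b) when $p$ exceeds the first nonzero Hodge-filtration index $k_M$ of $\H^M(\M^\bullet)$: in that regime $\gr^F_p\DR(\H^M(\M^\bullet))$ is no longer concentrated in a single degree, and one needs a finer analysis of the Koszul shape of the graded de Rham complex together with Saito's strictness for mixed Hodge modules to ensure that $\H^{k-p-M}$ and $\H^{k-p-M-1}$ still vanish.
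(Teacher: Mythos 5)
Your approach is essentially the same as the paper's: induction on cohomological amplitude via the top-truncation triangle, using the vanishing hypothesis to force $\H^0\left(\gr^F_p\DR(\H^M(\M^\bullet))\right)=0$ for $p<k$ (hence first nonzero Hodge index $k_M\ge k$), and then killing the boundary terms in the long exact sequence to reduce to $\tau_{<M}\M^\bullet$. The ``obstacle'' you flag in part (b), however, is not actually an obstacle: no strictness or deeper strictness analysis is required.

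The missing elementary fact is the precise \emph{degree interval} of the graded de Rham complex of a single mixed Hodge module. From the filtration formula \eqref{eqn: filtration on DR}, for a filtered $\D$-module $M$ with first nonzero Hodge piece in index $k_M$, the graded piece in cohomological degree $i$ of $\gr^F_p\DR(M)$ is $\gr^F_{p+i}M\otimes\wedge^{-i}T_\W$, which vanishes unless $p+i\ge k_M$, i.e.\ $i\ge k_M-p$. So $\gr^F_p\DR\left(\H^M(\M^\bullet)\right)$ is supported in cohomological degrees $[\,k_M-p,\,0\,]$, not merely $\le 0$; the paper cites \cite{Saito88}*{Lemme 2.1.6} for exactly this. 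Now since $M>0$, $p\ge k$, and $k_M\ge k$, one has $k-p-M<k_M-p$ and $k-p-M-1<k_M-p$, so both contributions $\H^{k-p-M}$ and $\H^{k-p-M-1}$ of $\gr^F_p\DR\left(\H^M(\M^\bullet)\right)$ vanish for every $p\ge k$, regardless of whether $p$ exceeds $k_M$. This closes the step at once, and the induction goes through exactly as you set it up. (For completeness, the paper phrases the step in part (a) slightly differently, exhibiting a surjection $\H^q\left(\gr^F_p\DR(\M^\bullet)\right)\twoheadrightarrow\H^0\left(\gr^F_p\DR(\H^q(\M^\bullet))\right)$ valid for all $p$ rather than invoking a quasi-isomorphism only for $p<k$, but this is a cosmetic difference; both rest on the observation that $\gr^F_p\DR(\tau_{<q}\M^\bullet)$ has cohomology only in degrees $<q$.)
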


\begin{proof}
We proceed by induction on the largest number $q$ such that $\H^q(\M^\bullet)\neq 0$. If there is no such $q$ or $q=0$, the statement is trivial. Suppose $q>0$ and consider the distinguished triangles \eqref{eqn: truncation} and \eqref{eqn: truncation2}.

Since the cohomologies of $\tau_{<q}\M^\bullet$ are supported on degrees less than $q$, it is easy to check that the cohomologies of $\gr^F_p\DR\left(\tau_{<q}\M^\bullet\right)$ are supported on degrees less than $q$.
Therefore, we have a surjection
$$
\H^q\left(\gr^F_{p}\DR\left(\M^\bullet\right)\right)\to \H^0\left(\gr^F_{p}\DR\left(\H^q(\M^\bullet)\right)\right)
$$
for all $p$. This implies that $\H^0\left(\gr^F_{p}\DR\left(\H^q(\M^\bullet)\right)\right)=0$ for all $p<k$, and thus $\gr^F_{p}\DR\left(\H^q(\M^\bullet)\right)=0$ for all $p<k$. Indeed, considered as a filtered $\D_\W$-module for an embedding $X\subset \W$, the index of the first nonzero term in the Hodge filtration of $\H^q(\M^\bullet)$ is at least $k$. Additionally, the cohomologies of $\gr^F_{p}\DR\left(\H^q(\M^\bullet)\right)$ are supported on degrees in the interval $[k-p, 0]$ (see \eqref{eqn: filtration on DR} or \cite{Saito88}*{Lemme 2.1.6}). Since $q$ is positive, we deduce that
$$
\gr^F_p\DR\left(\tau_{<q}\M^\bullet\right)=0,\quad \forall p<k,
$$
$$
\H^{k-p}\left(\gr^F_{p}\DR\left(\M^\bullet\right)\right)\isom \H^{k-p}\left(\gr^F_{p}\DR\left(\tau_{<q}\M^\bullet\right)\right).
$$
Therefore, we conclude by induction.
\end{proof}

Lastly, we state an analogue of \cite{KS21}*{Proposition 4.7} for objects in $D^bMHM(X)$. Recall that $\M^\bullet\in D^bMHM(X)$ has the underlying $\C$-complex $\DR(\M^\bullet)\in D^b_c(X,\C)$ in the bounded derived category of $\C$-sheaves on $X$ with constructible cohomologies. Notice that \eqref{eqn: filtration on DR} induces a filtration on this object, with respect to an embedding.

\begin{lem}
\label{lem: top HF}
    Let $X\subset \W$ be an embedding and $\M^\bullet \in D^bMHM(X)$. Let $F$ be the filtration on $\DR_\W (\M^\bullet)\in D^b_c(\W,\C)$, induced by \eqref{eqn: filtration on DR}. If $\gr^F_p\DR(\M^\bullet)=0$ for all $p>k$, then the inclusion
    $$
    F_k\DR_\W(\M^\bullet)\to \DR_\W(\M^\bullet)
    $$
    is a quasi-isomorphism.
\end{lem}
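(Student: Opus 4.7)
The plan is to exploit the filtration by subcomplexes $F_\bullet \DR_\W(\M^\bullet)$ whose associated graded pieces are precisely the graded de Rham complexes $\gr^F_p \DR(\M^\bullet)$. Working with a chosen representative of $\M^\bullet$ as a bounded complex of mixed Hodge modules, the formula \eqref{eqn: filtration on DR} gives, for each $p$, an inclusion of complexes $F_{p-1}\DR_\W(\M^\bullet)\hookrightarrow F_p\DR_\W(\M^\bullet)$. This is termwise injective, and the condition that the Hodge filtration on each underlying $\D_\W$-module satisfies $F_q M\cdot F_1\D_\W\subseteq F_{q+1}M$ ensures that the de Rham differentials preserve the filtration. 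Hence one obtains a short exact sequence of complexes
$$
0\to F_{p-1}\DR_\W(\M^\bullet)\to F_p\DR_\W(\M^\bullet)\to \gr^F_p\DR(\M^\bullet)\to 0,
$$
and in particular a distinguished triangle in the derived category.

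By hypothesis, $\gr^F_p\DR(\M^\bullet)\simeq 0$ for every $p>k$, so the distinguished triangle shows that the inclusion $F_{p-1}\DR_\W(\M^\bullet)\to F_p\DR_\W(\M^\bullet)$ is a quasi-isomorphism for each such $p$. Iterating yields that
$$
F_k\DR_\W(\M^\bullet)\to F_N\DR_\W(\M^\bullet)
$$
is a quasi-isomorphism for every $N\geq k$.

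To finish, I would pass to the colimit. The Hodge filtration on a mixed Hodge module is by definition exhaustive, $\bigcup_p F_pM=M$, so in each fixed cohomological degree $-i$ of $\DR_\W(\M^\bullet)$, one has $\bigcup_p F_{p-i}M\tensor_{\O_\W}\wedge^i T_\W=M\tensor_{\O_\W}\wedge^i T_\W$. Consequently $\DR_\W(\M^\bullet)=\mathrm{colim}_{N}F_N\DR_\W(\M^\bullet)$ as complexes of sheaves. Since filtered colimits of sheaves are exact, they commute with taking cohomology, and combining with the previous step yields
$$
H^q\bigl(\DR_\W(\M^\bullet)\bigr)=\mathrm{colim}_N H^q\bigl(F_N\DR_\W(\M^\bullet)\bigr)=H^q\bigl(F_k\DR_\W(\M^\bullet)\bigr)
$$
for every $q$, which is the desired quasi-isomorphism.

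The only mild subtlety I anticipate is that $\M^\bullet$ is a complex rather than a single mixed Hodge module, so $\DR_\W(\M^\bullet)$ is really the total complex of a bicomplex and the filtration is assembled componentwise. This is harmless, since the short exact sequences and the exhaustivity of $F_\bullet$ hold in each bidegree separately. Should a more careful reduction be desired, one can instead argue by induction on the number of nonzero cohomology objects of $\M^\bullet$ using the truncation triangle and the same style of argument that appears in Lemma \ref{lem: graded de Rham of first cohomology}, reducing to the case of a single mixed Hodge module.
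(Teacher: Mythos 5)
Your proof is correct and takes essentially the same approach as the paper's, which simply observes that the lemma follows from the exhaustiveness of the Hodge filtration on each component of $\M^\bullet$ (and mentions \cite{KS21}*{Proposition 4.7} applied componentwise as an alternative). Your write-up fills in the details — the short exact sequences $0\to F_{p-1}\DR_\W(\M^\bullet)\to F_p\DR_\W(\M^\bullet)\to \gr^F_p\DR(\M^\bullet)\to 0$, iterating the resulting quasi-isomorphisms, and passing to the filtered colimit — and the fallback you mention (reducing to a single mixed Hodge module) matches the paper's suggested alternative.
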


This follows immediately from the fact that the Hodge filtration $F$ is exhaustive for each component of the complex $\M^\bullet$. Alternatively, one may apply \cite{KS21}*{Proposition 4.7} for each component of $\M^\bullet$ to find some large $k'$ such that $F_{k'}\DR_\W(\M^\bullet)\to \DR_\W(\M^\bullet)$ is a quasi-isomorphism, from which the lemma follows easily.

\section{A description of rational singularities via Hodge modules}
\label{sec: rational singularities}

In this section, we characterize rational singularities using the polarizable Hodge modules associated to the intersection complexes. We first investigate a mixed Hodge module theoretic interpretation of Du Bois singularities from the literature. Recall from Saito \cite{Saito00} (or Theorem \ref{thm: Du Bois via MHM}) that $X$ has Du Bois singularities if and only if the natural morphism
$$
\O_X\to\gr^F_0\DR(\Q^H_X)
$$
in $D^b_{coh}(X,\O_X)$ is a quasi-isomorphism. The following rather elementary lemma simplifies this criterion.

\begin{lem}
\label{lem: finite rank 1}
    Let $\mu:X'\to X$ be a finite morphism of varieties. If there exists an isomorphism $\mu_*\O_{X'}\isom \O_X$ as a sheaf of $\O_X$-modules, then $\mu$ is an isomorphism.
\end{lem}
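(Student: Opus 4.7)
The plan is to reduce immediately to commutative algebra: the conclusion is local on $X$, so I would cover $X$ by affines and on each piece write $X=\operatorname{Spec}(A)$, $X'=\operatorname{Spec}(B)$, with $\phi:A\to B$ the finite structure map. The hypothesis translates to $B\cong A$ as $A$-modules, so $B$ is a free $A$-module of rank one; fix a basis element $e\in B$ and write $1_B=ce$ for some $c\in A$. Everything then reduces to showing $\phi$ is an isomorphism of rings.

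First I would show $\phi$ is injective: if $a\in A$ satisfies $\phi(a)=a\cdot 1_B=0$, then for every $b\in B$, $a\cdot b=(a\cdot 1_B)\cdot b=0$, so $a$ annihilates $B$; since $B\cong A$ as $A$-modules, $a$ annihilates $A$, forcing $a=0$. Second, and this is the decisive step, I would use the ring structure on $B$. Write $e\cdot e = fe$ for some $f\in A$. Expanding the identity $1_B\cdot 1_B=1_B$ gives $c^{2}f\cdot e = c\cdot e$, and since $e$ is a basis, $c^{2}f=c$. Injectivity of $\phi$ combined with $a\cdot 1_B=ace$ shows that $c$ is a non-zero-divisor in $A$; cancelling, $cf=1$, so $c$ is a unit. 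Therefore $\phi(A)=A\cdot 1_B = Ac\cdot e = A\cdot e = B$, and $\phi$ is surjective as well.

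The only subtlety — the point where the proof could fail and hence the ``main obstacle'' — is that the given isomorphism $\mu_{*}\O_{X'}\cong\O_X$ is only asserted as $\O_X$-modules, not as $\O_X$-algebras. What saves the argument is that the multiplicativity axiom $1_B\cdot 1_B=1_B$, together with the rank-one freeness, already forces the distinguished coefficient $c$ expressing $1_B$ in the chosen basis to be a unit. Once this is in hand, injectivity and surjectivity of $\phi$ follow formally, and globalizing over the affine cover produces the isomorphism $\mu\colon X'\to X$.
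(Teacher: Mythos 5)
Your proof is correct, and it takes a genuinely different route from the paper's. The paper works pointwise on fibers: since $\mu_*\O_{X'}\cong\O_X$, the scheme-theoretic fiber $\mu^*(x)$ has one-dimensional global sections over $\kappa(x)$ for every $x$, so the natural ring map $\kappa(x)\to H^0(\mu^*(x),\O)$ is a nonzero map between one-dimensional $\kappa(x)$-vector spaces, hence an isomorphism; Nakayama then gives surjectivity of $\phi\colon\O_X\to\mu_*\O_{X'}$, and a surjection $\O_X\twoheadrightarrow\O_X$ is automatically injective. Your argument instead stays entirely in commutative algebra on an affine chart: you exploit the free rank-one structure $B=Ae$ together with the idempotency $1_B^2=1_B$ to pin down the coefficient $c$ with $1_B=ce$ and show it is a unit, after first checking $\phi$ is injective (and hence $c$ is a non-zero-divisor) via the observation that $\ker\phi\subset\operatorname{Ann}_A(B)=\operatorname{Ann}_A(A)=0$. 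Your route avoids any mention of fibers, residue fields, or Nakayama, and in fact never uses that $X$ is a variety or that $\mu$ is finite (beyond affineness, which is needed in both proofs to globalize from $\operatorname{Spec}$'s); the paper's route is shorter to state and aligns with the standard geometric picture that $\mu$ is a bijection on points with reduced fibers. The only minor thing worth flagging in your write-up is that the final globalization step, from ``the structure map on each affine is an isomorphism'' to ``$\mu$ is an isomorphism,'' silently uses $X'=\operatorname{Spec}_X(\mu_*\O_{X'})$, which is where finiteness (affineness) of $\mu$ actually enters; it would be good to say this explicitly.
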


\begin{proof}
It suffices to prove that the natural morphism $\phi:\O_X\to \mu_*\O_{X'}$ of the sheaf of $\O_X$-algebras is an isomorphism. For each point $x\in X$, the restriction map $\phi|_{x}:\O_X|_{x}\to \mu_*\O_{X'}|_{x}$ is induced by the morphism $\mu|_{\mu^*(x)}:\mu^*(x)\to x$ where $\mu^*(x)$ is the scheme-theoretic inverse image of $x$. This restriction morphism is an isomorphism because the global section of $\mu^*(x)$ is a vector space of dimension $1$ over the residue field of $x$, from the assumption $\mu_*\O_{X'}\isom \O_X$. Hence, $\phi$ is an isomorphism.
\end{proof}

As a consequence, we conclude that $X$ has Du Bois singularities if and only if there exists some isomorphism
$$
\O_X\isom\H^0\left(\gr^F_0\DR(\Q^H_X)\right),
$$
and
$$
\H^k\left(\gr^F_0\DR(\Q^H_X)\right)=0, \quad \forall k\neq 0.
$$
Notice that the first condition assures that the weak normalization $\mu^{wn}:X^{wn}\to X$ is an isomorphism by the above lemma and \cite{Saito00}*{Proposition 5.2}; the second condition assures the vanishing of higher cohomologies of the Du Bois complex $\DB_X^0$. Our goal is to establish an analogous statement for rational singularities using polarizable Hodge modules. The following properties of intersection complexes are crucial for this purpose.

\begin{lem}
\label{lem: basic IC}
    Let $X$ be an embeddable irreducible variety and $\mu:\widetilde X\to X$ be a resolution of singularities. Then there exists a natural morphism
    $$
    \Q_X^H\to \IC^H_X[-\dim X]
    $$
    and quasi-isomorphisms
    $$
    R\mu_*\O_{\widetilde X}\isom\gr^F_0\DR\left(\IC^H_X[-\dim X]\right), \quad \mu_*\omega_{\widetilde X}\isom \gr^F_{-\dim X}\DR\left(\IC^H_X\right).
    $$
    Furthermore,
    $$
    \gr^F_{p}\DR\left(\IC^H_X\right)\isom 0,\quad \forall p>0 \;\mathrm{or}\; p<-\dim X.
    $$
\end{lem}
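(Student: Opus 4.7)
The plan is to deduce every assertion from Saito's Decomposition Theorem \ref{thm: Saito's decomposition theorem} applied to $\mu$, combined with Proposition \ref{prop: commutativity of graded de Rham} and an explicit calculation of the graded de Rham complex on the smooth variety $\widetilde X$. Write $d := \dim X$. Since $\widetilde X$ is smooth, $\IC^H_{\widetilde X} = \Q^H_{\widetilde X}[d]$ is pure of weight $d$, so $\mu_*\IC^H_{\widetilde X}$ is pure of weight $d$ and decomposes as a direct sum of shifted simple polarizable Hodge modules. Because $\mu$ is birational, restricting to the smooth locus of $X$ shows that the unique simple summand with strict support $X$ is $\IC^H_X$ in cohomological degree $0$, and every other simple summand has strict support inside $\Sing(X)$. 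The natural morphism $\Q^H_X \to \IC^H_X[-d]$ is then obtained by composing the adjunction $\Q^H_X \to \mu_*\mu^*\Q^H_X = \mu_*\Q^H_{\widetilde X}$ with the projection onto the distinguished $\IC^H_X[-d]$ summand.

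For $\mu_*\omega_{\widetilde X}\isom\gr^F_{-d}\DR(\IC^H_X)$, a direct computation from \eqref{eqn: de Rham complex} and \eqref{eqn: filtration on DR} applied to $(\omega_{\widetilde X}, F_\bullet)$ underlying $\IC^H_{\widetilde X}$ gives $\gr^F_{-d}\DR(\IC^H_{\widetilde X}) \isom \omega_{\widetilde X}$ concentrated in degree $0$. Combined with Proposition \ref{prop: commutativity of graded de Rham} and Grauert--Riemenschneider vanishing, this yields
\[
\gr^F_{-d}\DR(\mu_*\IC^H_{\widetilde X}) \isom R\mu_*\omega_{\widetilde X} \isom \mu_*\omega_{\widetilde X}.
\]
Every simple summand of $\mu_*\IC^H_{\widetilde X}$ other than $\IC^H_X$ has strict support $Y \subsetneq X$ of dimension $< d$; by the standard bound in Saito's theory, its Hodge filtration vanishes below index $-\dim Y > -d$, so $\gr^F_{-d}\DR$ kills it. Hence only the $\IC^H_X$ summand contributes, proving the desired isomorphism.

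The same dimension bound applied to $\IC^H_X$ itself yields $\gr^F_p\DR(\IC^H_X) = 0$ for $p < -d$. By Lemma \ref{lem: duality} and the polarization $\IC^H_X(d) \isom \dual\IC^H_X$, which shifts the Hodge filtration by $d$, we recover the duality \eqref{eqn: duality for IC}:
\[
R\Hom_{\O_X}\bigl(\gr^F_p\DR(\IC^H_X),\, \omega^\bullet_X\bigr) \isom \gr^F_{-p-d}\DR(\IC^H_X).
\]
Taking $p > 0$ forces $-p-d < -d$, so the right-hand side vanishes and we obtain $\gr^F_p\DR(\IC^H_X) = 0$ for $p > 0$, completing the vanishing statement. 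Specializing the same duality to $p = 0$ and combining with the $\omega$-isomorphism gives $R\Hom_{\O_X}(\gr^F_0\DR(\IC^H_X), \omega^\bullet_X) \isom \mu_*\omega_{\widetilde X}$. On the coherent side, Grothendieck duality for the proper morphism $\mu$, together with $\mu^!\omega^\bullet_X = \omega_{\widetilde X}[d]$ and Grauert--Riemenschneider, yields $R\Hom_{\O_X}(R\mu_*\O_{\widetilde X},\, \omega^\bullet_X) \isom \mu_*\omega_{\widetilde X}[d]$. Biduality in $D^b_{\mathrm{coh}}(X, \O_X)$ then identifies $\gr^F_0\DR(\IC^H_X) \isom R\mu_*\O_{\widetilde X}[d]$, which after shifting by $-d$ is the first quasi-isomorphism.

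The main structural input I am borrowing from Saito's theory of polarizable Hodge modules---and would not try to reprove---is the dimension bound on the first nonzero Hodge filtration piece of a simple pure Hodge module in terms of the dimension of its strict support. Once this is granted, every other step is a formal consequence of Proposition \ref{prop: commutativity of graded de Rham}, the duality of Lemma \ref{lem: duality}, Grauert--Riemenschneider vanishing, and Grothendieck duality for $\mu$.
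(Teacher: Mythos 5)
Your overall route is the same as the paper's: decompose $\mu_*\IC^H_{\widetilde X}$ by Saito's Decomposition Theorem, use Proposition \ref{prop: commutativity of graded de Rham} and Grauert--Riemenschneider to identify $\gr^F_{-d}\DR$ of the whole direct image with $\mu_*\omega_{\widetilde X}$, then pass through the polarization/duality \eqref{eqn: duality for IC} and Grothendieck duality for $\mu$ to obtain the $\gr^F_0$ statement and the vanishing for $p>0$. The step you handle differently, and incorrectly, is the elimination of the extra summands.

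You assert that every simple summand $M$ of $\mu_*\IC^H_{\widetilde X}$ with strict support $Y\subsetneq X$ satisfies $F_pM=0$ for $p<-\dim Y$, calling this a ``standard bound in Saito's theory.'' There is no such bound. A pure polarizable Hodge module with strict support $Y$ of dimension $k$ restricts generically to a polarizable variation of Hodge structure of weight $w-k$, and the starting index of the Hodge filtration of the underlying $\D$-module depends on the Hodge types of that variation, not on $\dim Y$. Already for the blow-up $\mu\colon \mathrm{Bl}_0\C^2\to\C^2$, the summand of $\mu_*\IC^H_{\widetilde X}$ supported at the origin is the Tate twist $\Q(-1)^H$ on a point, whose Hodge filtration $F_p$ is nonzero for $p\ge -1$; here $\dim Y=0$, so your claimed bound $F_p=0$ for $p<0$ fails. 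What \emph{is} true is that the filtration of that summand starts strictly above $-\dim X=-2$, which is the inequality you actually need, but it is not a consequence of your dimension bound. (The bound you invoke does hold for $\IC^H_X$ itself, since there the generic variation is trivial of weight $0$; so your use of it to get $\gr^F_p\DR(\IC^H_X)=0$ for $p<-d$ is legitimate.)

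The correct way to eliminate the extra summands is exactly what the paper does: since $\gr^F_{-d}\DR$ of the total direct image is $\mu_*\omega_{\widetilde X}$, a sheaf concentrated in a single degree, the pieces coming from $\H^q$ with $q\ne 0$ are killed by degree considerations (they would sit in degree $q$), and the pieces coming from the $\H^0$-summands with strict support in $\Sing(X)$ are killed because they would be a direct summand of the torsion-free sheaf $\mu_*\omega_{\widetilde X}$ supported on a proper closed subset. Replace your appeal to the fictitious dimension bound by this torsion-freeness argument and the proof goes through. The remainder of your write-up --- the construction of $\Q^H_X\to\IC^H_X[-d]$ via the canonical projection onto the full-support summand, the duality chase producing $R\mu_*\O_{\widetilde X}\isom\gr^F_0\DR(\IC^H_X[-d])$ via Grothendieck duality and biduality, and the vanishing for $p>0$ via \eqref{eqn: duality for IC} --- is correct and matches the paper.
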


\begin{proof}
The first natural morphism is \cite{Saito90}*{4.5.11}. For the quasi-isomorphisms, notice that the direct image $\mu_*\Q^H_{\widetilde X}$ is generically the trivial variation of Hodge structures. Hence by Saito's Decomposition Theorem, we have
$$
\mu_*\Q^H_{\widetilde X}\isom \IC^H_X[-\dim X]\oplus \N^\bullet
$$
in $D^bMHM(X)$ where $\N^\bullet$ is supported on a proper subvariety of $X$. Let $\dim X=n$ for convenience. From Proposition \ref{prop: commutativity of graded de Rham}, we have
$$
R\mu_*\left(\gr^F_{-n}\DR\left(\Q^H_{\widetilde X}\right)\right)\isom \gr^F_{-n}\DR\left(\IC^H_X[-\dim X]\right)\oplus \gr^F_{-n}\DR (\N^\bullet).
$$
Together with $\gr^F_{-n}\DR\left(\Q^H_{\widetilde X}\right)\isom\omega_{\widetilde X}[-\dim X]$ and the Grauert–Riemenschneider vanishing theorem, the above quasi-isomorphism reduces to
$$
\mu_*\omega_{\widetilde X}[-\dim X]\isom\gr^F_{-n}\DR\left(\IC^H_X[-\dim X]\right)
$$
because $\mu_*\omega_{\widetilde X}$ is torsion free. Applying the duality \eqref{eqn: duality for IC}, we obtain
$$
R\Hom_{\O_X}\left(\mu_*\omega_{\widetilde X},\omega_X^\bullet\right)\isom \gr^F_0\DR\left(\IC^H_X\right),
$$
from which we deduce the lemma using Grothendieck duality on $\mu$. Lastly, if $p$ is not in the interval $[-\dim X,0]$, then $\gr^F_{p}\DR\left(\Q^H_{\widetilde X}\right)\isom 0$, which proves the last statement in the lemma by Saito's Decomposition Theorem.
\end{proof}

When $X$ is not necessarily an irreducible variety, there exists a natural morphism $\Q^H_X\to \Q^H_{X_i}$ in $D^bMHM(X)$ for each irreducible component $X_i$ of $X$. Taking a composition with the morphism in Lemma \ref{lem: basic IC}, we have a natural morphism $\Q^H_X\to \IC^H_{X_i}[-\dim X_i]$, that induces a natural morphism
\begin{equation}
\label{eqn: Q to IC}
\Q^H_X\to \bigoplus_i\IC^H_{X_i}[-\dim X_i].
\end{equation}

\begin{prop}
\label{prop: criterion for rational}
    Let $X=\cup_i X_i$ be a variety, where $X_i$ are the irreducible components of $X$. Then $X$ has rational singularities if there exists an isomorphism
    $$
    \O_X\isom \H^0\left(\gr^F_0\DR\left(\bigoplus_i\IC^H_{X_i}[-\dim X_i]\right)\right)
    $$
    and
    $$
   \H^k\left(\gr^F_0\DR\left(\bigoplus_i\IC^H_{X_i}[-\dim X_i]\right)\right)=0, \quad \forall k\neq 0.
    $$
    Conversely, if $X$ has rational singularities, then the following composition is a quasi-isomorphism:
    $$
    \O_X\to \gr^F_0\DR\left(\Q^H_X\right)\to \gr^F_0\DR\left(\bigoplus_i\IC^H_{X_i}[-\dim X_i]\right)
    $$
\end{prop}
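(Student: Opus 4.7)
The plan is to reduce both directions to statements about a resolution of singularities via the identification $\gr^F_0\DR\left(\bigoplus_i\IC^H_{X_i}[-\dim X_i]\right)\isom R\mu_*\O_{\widetilde X}$ supplied by Lemma \ref{lem: basic IC} applied componentwise, where $\mu:\widetilde X=\sqcup_i\widetilde X_i\to X$ is a resolution obtained by resolving each irreducible component separately. Under this identification, the two hypothesized cohomological conditions translate directly into statements about $\mu_*\O_{\widetilde X}$ and the higher direct images $R^k\mu_*\O_{\widetilde X}$.

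For the forward direction, assume both conditions. The vanishing of higher cohomology yields $R^k\mu_*\O_{\widetilde X}=0$ for $k>0$, while the degree-zero condition gives an isomorphism $\mu_*\O_{\widetilde X}\isom\O_X$ of $\O_X$-modules. The main task is to promote this bare isomorphism to the statement that the natural adjunction map is an isomorphism. I would apply the Stein factorization $\mu=f\circ g$ with $g:\widetilde X\to Y$ of connected fibers and $f:Y\to X$ finite; then $g_*\O_{\widetilde X}=\O_Y$ automatically, so $f_*\O_Y\isom\O_X$ as $\O_X$-modules and Lemma \ref{lem: finite rank 1} forces $f$ to be an isomorphism. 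Thus $X=Y$ is normal, the natural map $\O_X\to\mu_*\O_{\widetilde X}$ is an isomorphism, and combined with the vanishing of higher direct images we conclude that $X$ has rational singularities.

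For the converse, suppose $X$ has rational singularities, hence is normal with disjoint irreducible components; I may therefore assume $X$ is irreducible. Saito's decomposition from the proof of Lemma \ref{lem: basic IC} gives $\mu_*\Q^H_{\widetilde X}\isom\IC^H_X[-\dim X]\oplus\N^\bullet$ with $\N^\bullet$ supported on a proper subvariety $S\subsetneq X$, and the natural map \eqref{eqn: Q to IC} factors as the adjunction $\Q^H_X\to\mu_*\Q^H_{\widetilde X}$ followed by projection onto the intersection complex summand. Applying $\gr^F_0\DR$ together with Proposition \ref{prop: commutativity of graded de Rham}, the composition in the statement factors as
$$
\O_X\to\gr^F_0\DR(\Q^H_X)\to R\mu_*\O_{\widetilde X}\to\gr^F_0\DR\left(\IC^H_X[-\dim X]\right),
$$
where the initial composition into $R\mu_*\O_{\widetilde X}$ is the natural adjunction map. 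The rational singularities hypothesis makes this first composition a quasi-isomorphism onto $R\mu_*\O_{\widetilde X}\isom\O_X$, a torsion-free sheaf concentrated in degree zero. The remaining projection discards $\gr^F_0\DR(\N^\bullet)$, which is supported on $S$; being a direct summand of the torsion-free sheaf $\O_X$, it must vanish, so the composition is a quasi-isomorphism.

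The main obstacle in both directions is the interplay between canonical and non-canonical isomorphisms. The assumption in the forward direction is only a bare $\O_X$-module isomorphism, and the Stein factorization trick combined with Lemma \ref{lem: finite rank 1} is what extracts the geometric conclusion. In the converse direction, Saito's decomposition is non-canonical, so identifying the composition in the statement with the natural adjunction into $R\mu_*\O_{\widetilde X}$ followed by projection onto an irreducible summand — and then ruling out the contribution of $\N^\bullet$ via torsion-freeness and proper support — is the delicate point of the argument.
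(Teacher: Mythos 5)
Your forward direction is correct and is, up to phrasing, identical to the paper's: the Stein factorization $Y$ of $\mu$ is exactly the disjoint union of normalizations $X^n=\sqcup_i X_i^n$, so your use of Lemma \ref{lem: finite rank 1} on $f:Y\to X$ is the same as the paper's application of that lemma to $\mu^n:X^n\to X$. Your converse is also correct, but takes a slightly longer route than the paper. The paper simply notes that $\gr^F_0\DR(\IC^H_X[-\dim X])\isom R\mu_*\O_{\widetilde X}\isom\O_X$ (Lemma \ref{lem: basic IC} plus rationality) places the composition between two copies of $\O_X$, and that this map of reflexive rank-one sheaves is an isomorphism over the smooth locus — a set of codimension at least two — hence an isomorphism. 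You instead factor the map \eqref{eqn: Q to IC} as the adjunction $\Q^H_X\to\mu_*\Q^H_{\widetilde X}$ followed by the decomposition-theorem projection onto $\IC^H_X[-\dim X]$. This factorization is a genuinely non-automatic claim: it requires that any morphism $\Q^H_X\to\IC^H_X[-\dim X]$ in $D^bMHM(X)$ is determined by its restriction to the smooth locus, which holds because $\IC^H_X$, as an intermediate extension, admits no subobjects supported on proper closed subsets, but you assert it without proof. In the same spirit, the identification of $\O_X\to\gr^F_0\DR(\Q^H_X)\to R\mu_*\O_{\widetilde X}$ with the natural adjunction deserves at least a sentence (it follows by comparison over the smooth locus, using that a global function on an irreducible variety equal to $1$ on a dense open is $1$). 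None of this is wrong, but it injects compatibility-of-natural-maps subtleties that the paper's shorter codimension-two argument sidesteps entirely; if you keep your route, you should supply the one-line justifications for those two compatibility claims.
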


\begin{proof}
Let $\mu^n:X^n\to X$ and $\mu^n_i:X^n_i\to X_i$ be normalizations. Notice that $X^n$ is a disjoint union of $X_i^n$. The first isomorphism is equivalent to $\O_X\isom \mu^n_*\O_{X^n}$, since we have
$$
\H^0\left(\gr^F_0\DR\left(\IC^H_{X_i}[-\dim X_i]\right)\right)\isom (\mu^n_i)_*\O_{X_i^n}
$$
from Lemma \ref{lem: basic IC} for each $X_i$. Consequently, the first isomorphism implies that $X$ is normal by Lemma \ref{lem: finite rank 1}. Suppose morphisms $\mu_i:\widetilde X_i\to X_i$ are resolutions of singularities. Then, $X$ has rational singularities if and only if $R^k(\mu_i)_*\O_{\widetilde X_i}=0$ for all $k\neq0$, which is equivalent to
$$
   \H^k\left(\gr^F_0\DR\left(\bigoplus_i\IC^H_{X_i}[-\dim X_i]\right)\right)=0, \quad \forall k\neq 0
$$
by Lemma \ref{lem: basic IC}.

For the converse statement, if $X$ has rational singularities, then $X$ is normal. Thus, it suffices to prove when $X$ is an irreducible normal variety with rational singularities. By Lemma \ref{lem: basic IC}, we have a quasi-isomorphism
$$
\O_X\isom \gr^F_0\DR\left(\IC^H_X[-\dim X]\right)
$$
and the composition
$$
\O_X\to \gr^F_0\DR\left(\Q^H_X\right)\to \gr^F_0\DR\left(\IC^H_{X}[-\dim X]\right)
$$
is a quasi-isomorphism away from the singular locus, which has a codimension at least two. Therefore, this composition is a quasi-isomorphism.
\end{proof}

\section{A characterization of Du Bois complexes of pairs}
\label{sec: characterization of Du Bois complexes}

For a pair $(X,Z)$ consisting of a variety $X$ and a closed subvariety $Z$, the Du Bois complex $\DB^0_{X,Z}$ does not generally have a simple interpretation through a resolution of singularities unless we assume certain singularity conditions on the complement. When $X\smallsetminus Z$ is smooth and a log resolution $\mu:(\widetilde X,E)\to (X,Z)$ is an isomorphism over $X\smallsetminus Z$, we have a simple characterization $\DB^0_{X,Z}\isom R\mu_*\O_{\widetilde X}(-E)$ obtained from the work of Du Bois \cite{DuBois81}. In Theorem \ref{thm: Du Bois complex, rational singularities}, we claim that this continues to hold when $X\smallsetminus Z$ has rational singularities and $\mu$ is not an isomorphism over $X\smallsetminus Z$. Our goal is to prove this theorem by utilizing the theory of mixed Hodge modules discussed in the previous sections.

\subsection{Proof of the characterization Theorem \ref{thm: Du Bois complex, rational singularities}}

We first prove the second quasi-isomorphism without the assumption of the singularity of the complement.

\begin{lem}
\label{lem: graded de Rham of j_!IC}
Let $X$ be an embeddable variety and $Z\subset X$ be a closed subvariety. Let $\mu:(\widetilde X,E)\to (X,Z)$ be a log resolution. Then there exists a quasi-isomorphism
$$
R\mu_*\O_{\widetilde X}(-E)\isom \gr^F_0\DR\left(j_!\left(\bigoplus_i\IC_{X_i\smallsetminus Z}^H[-\dim X_i]\right)\right).
$$
\end{lem}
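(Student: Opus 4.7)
The plan is to identify both sides of the claimed quasi-isomorphism as $\gr^F_0\DR$ of natural mixed Hodge module complexes on $X$, and then match them using Saito's Decomposition Theorem together with the vanishing lemmas from Section~\ref{sec: graded de Rham}. First, since $E$ is SNC, applying $\gr^F_0\DR$ to the standard distinguished triangle $\tilde\jmath_!\Q^H_{\widetilde U} \to \Q^H_{\widetilde X} \to i_*\Q^H_E \xrightarrow{+1}$ on $\widetilde X$ (with $\tilde\jmath: \widetilde U := \widetilde X\smallsetminus E \hookrightarrow \widetilde X$ and $i: E \hookrightarrow \widetilde X$), together with $\DB^0_E\isom\O_E$, yields $\gr^F_0\DR(\tilde\jmath_!\Q^H_{\widetilde U}) \isom \O_{\widetilde X}(-E)$. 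Proposition~\ref{prop: commutativity of graded de Rham} applied to the proper morphism $\mu$ then gives $R\mu_*\O_{\widetilde X}(-E) \isom \gr^F_0\DR(\mu_*\tilde\jmath_!\Q^H_{\widetilde U})$. Setting $\bar\mu := \mu|_{\widetilde U}: \widetilde U \to U$ (proper as the base change of $\mu$ along $j$), proper base change together with $i^*\tilde\jmath_! = 0$ identifies $\mu_*\tilde\jmath_!\Q^H_{\widetilde U} \isom j_!\bar\mu_*\Q^H_{\widetilde U}$ via the open-closed triangle on $X$.

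\medskip

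Next, for each irreducible component $X_i$ of $\overline{X\smallsetminus Z}$ of dimension $n_i$, Saito's Decomposition Theorem applied to $\bar\mu_{i*}\IC^H_{\widetilde U_i}$ (pure of weight $n_i$) yields $\bar\mu_{i*}\IC^H_{\widetilde U_i} \isom \IC^H_{U_i} \oplus \mathcal{E}_i$, where $\mathcal{E}_i$ is a finite direct sum of pure Hodge modules with strict supports $Y_l \subsetneq U_i$ (contained in the non-isomorphism locus of $\bar\mu_i$). The core task is to prove $\gr^F_0\DR_X(j_!\mathcal{E}_i) = 0$; combining this with the previous paragraph then gives $R\mu_{i*}\O_{\widetilde X_i}(-E_i) \isom \gr^F_0\DR_X(j_!\IC^H_{U_i}[-n_i])$, and summing over $i$ yields the lemma.

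\medskip

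I would first prove $\gr^F_0\DR_{U_i}(\mathcal{E}_i) = 0$ on $U_i$ by a duality argument. Proposition~\ref{prop: commutativity of graded de Rham} together with the Grauert--Riemenschneider vanishing theorem give $\gr^F_{-n_i}\DR(\bar\mu_{i*}\IC^H_{\widetilde U_i}) \isom \bar\mu_{i*}\omega_{\widetilde U_i}$, a torsion-free coherent sheaf on the integral scheme $U_i$; by Lemma~\ref{lem: basic IC}, this also equals $\gr^F_{-n_i}\DR(\IC^H_{U_i})$. Hence the remaining summand $\gr^F_{-n_i}\DR(\mathcal{E}_i)$, being supported on the proper subset $\bigcup_l Y_l \subsetneq U_i$, is a torsion direct summand of a torsion-free sheaf and must vanish. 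The polarization $\dual\mathcal{E}_i \isom \mathcal{E}_i(n_i)$ inherited from purity, combined with Lemma~\ref{lem: duality}, transfers this to
\[ \gr^F_0\DR_{U_i}(\mathcal{E}_i) \isom R\Hom_{\O_{U_i}}(\gr^F_{-n_i}\DR(\mathcal{E}_i),\, \omega^\bullet_{U_i}) = 0. \]
Together with $\gr^F_{<0}\DR_{U_i}(\mathcal{E}_i) = 0$ from Lemma~\ref{lem: first HF} applied to the proper morphism $\bar\mu_i$, each pure summand $\IC^H_{Y_l}(\mathbb{V}_l)$ of $\mathcal{E}_i$, viewed as a mixed Hodge module on its own strict support $Y_l$ via $\iota_{Y_l *}$, satisfies $\gr^F_p\DR_{Y_l} = 0$ for every $p \le 0$.

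\medskip

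Finally, to extend the vanishing from $U$ to $X$, I apply Lemma~\ref{lem: first HF} to the open embedding $\jmath_{Y_l}: Y_l \hookrightarrow \bar Y_l$ (where $\bar Y_l$ is the closure of $Y_l$ in $X$) to obtain $\gr^F_0\DR_{\bar Y_l}(\jmath_{Y_l !}\IC^H_{Y_l}(\mathbb{V}_l)) = 0$. The factorization $j \circ \iota_{Y_l} = \iota_{\bar Y_l} \circ \jmath_{Y_l}$ gives $j_!\iota_{Y_l *} = \iota_{\bar Y_l *}\jmath_{Y_l !}$, so pushing forward along the proper closed embedding $\iota_{\bar Y_l}$ via Proposition~\ref{prop: commutativity of graded de Rham} yields $\gr^F_0\DR_X(j_!\IC^H_{Y_l}(\mathbb{V}_l)) = 0$. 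Summing over the pieces of $\mathcal{E}_i$ completes the argument. I expect the duality step in the third paragraph to be the main obstacle, since it must combine Grauert--Riemenschneider, the torsion-freeness of $\bar\mu_{i*}\omega_{\widetilde U_i}$, the polarization of pure Hodge modules, and Lemma~\ref{lem: first HF} to convert a bottom-of-filtration vanishing at index $-n_i$ into the vanishing at index $0$ needed here.
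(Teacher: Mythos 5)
The first two paragraphs of your proposal are sound: identifying $R\mu_*\O_{\widetilde X}(-E) \isom \gr^F_0\DR(\mu_*\tilde\jmath_!\Q^H_{\widetilde U})$, applying base change to get $j_!\bar\mu_*\Q^H_{\widetilde U}$, and invoking Saito's Decomposition Theorem to produce $\bar\mu_{i*}\IC^H_{\widetilde U_i}\isom \IC^H_{U_i}\oplus \mathcal E_i$ all work. The gap is in the third paragraph, in the assertion that ``$\gr^F_{<0}\DR_{U_i}(\mathcal E_i)=0$ from Lemma~\ref{lem: first HF} applied to the proper morphism $\bar\mu_i$.'' That is not what Lemma~\ref{lem: first HF} gives: since $\gr^F_p\DR(\IC^H_{\widetilde U_i})=0$ for $p<-n_i$, the lemma only yields $\gr^F_p\DR(\mathcal E_i)=0$ for $p<-n_i$, not for $p<0$. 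Together with your Grauert--Riemenschneider/torsion-freeness argument at $p=-n_i$ and the duality argument at $p\ge 0$, you control the range $p\le -n_i$ and $p\ge 0$, but not $-n_i<p<0$, and the graded de Rham pieces of $\mathcal E_i$ in that range are generically nonzero. For the blowup of a smooth point of a surface, $\mathcal E_i\isom\Q_p^H(-1)$ and $\gr^F_{-1}\DR(\mathcal E_i)\isom\C_p\neq 0$; for the resolution of a cone over an elliptic curve, $\gr^F_{-1}\DR(\mathcal E_i)$ sees $H^1$ of the exceptional curve. In fact, if your claimed vanishing held for all $p\le 0$, then combined with the vanishing for $p>0$ it would force $\DR(\mathcal E_i)=0$ and hence $\mathcal E_i=0$, which fails whenever $\bar\mu_i$ is not small. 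Consequently the application of Lemma~\ref{lem: first HF} to $\jmath_{Y_l !}$ in the last paragraph is not justified, because that lemma needs vanishing of $\gr^F_p\DR(\IC^H_{Y_l}(\mathbb V_l))$ for \emph{all} $p<1$ to conclude vanishing at $p=0$ after $j_!$.

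The fix, which is what the paper does, is to move the duality to the other side of the open pushforward. Instead of trying to show $\gr^F_0\DR_X(j_!\mathcal E_i)=0$ directly via Lemma~\ref{lem: first HF}, use $\dual j_!=j_*\dual$ and the self-duality $\dual\mathcal E_i\isom\mathcal E_i(n_i)$ to reduce to $\gr^F_{-n_i}\DR_X(j_*\mathcal E_i)=0$, and then apply Lemma~\ref{lem: first HF} to $j_*$ at the bottom index $-n_i$, where the required vanishing $\gr^F_p\DR(\mathcal E_i)=0$ for $p<-n_i+1$ is exactly what you have. Concretely, the paper's proof works at level $-n_i$ throughout: it establishes $\gr^F_{-n_i}\DR((j_i\circ\mu_i)_*\Q^H_{\widetilde X_i\smallsetminus E_i})\isom\gr^F_{-n_i}\DR((j_i)_*\IC^H_{X_i\smallsetminus Z}[-n_i])$, computes the left side as $R(\mu_i)_*(\omega_{\widetilde X_i}(E_i)[-n_i])$, and only at the end applies $R\Hom(-,\omega_X^\bullet)$ together with Lemma~\ref{lem: duality} and $\dual j_*=j_!\dual$ to convert the level-$(-n_i)$ statement about $j_*$ into the level-$0$ statement about $j_!$. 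The essential point is that for a pure Hodge module, the Hodge-filtration vanishing needed by Lemma~\ref{lem: first HF} is known near the bottom of the filtration and propagates under pushforward, whereas vanishing at $p=0$ alone does not.
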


\begin{proof}
Consider the following commuting diagram
\begin{equation*}
\xymatrix{
{E_i}\ar[r]\ar[d]& {\widetilde X_i}\ar[d]_-{\mu_i}& {\widetilde X_i\smallsetminus E_i}\ar[l]_-{j_i}\ar[d]_{\mu_i}\\
{Z\cap X_i}\ar[r]&{X_i}&{X_i\smallsetminus Z}\ar[l]_-{j_i}
}
\end{equation*}
where $\mu_i:(\widetilde X_i,E_i)\to (X_i,Z\cap X_i)$ is a log resolution of singularities for each irreducible component $X_i\subset \overline{X\smallsetminus Z}$ so that $\mu:(\widetilde X,E)\to (X,Z)$ is the disjoint union of $\widetilde X_i$ mapping to $X$. From Saito's Decomposition Theorem, we have
$$
(\mu_i)_*\Q^H_{\widetilde X_i\smallsetminus E_i}\isom \IC^H_{X_i\smallsetminus Z}[-\dim X_i]\oplus \N^\bullet,
$$
and by Lemma \ref{lem: basic IC}, we obtain $\gr^F_p\DR(\N^\bullet)=0$ for all $p\le-\dim X_i$. Applying Lemma \ref{lem: first HF} to $\N^\bullet$ and the locally closed morphism $j_i:X_i\smallsetminus Z\to X$, we have
$$
\gr^F_{-\dim X_i}\DR\left((j_i\circ\mu_i)_*\Q^H_{\widetilde X_i\smallsetminus E_i}\right)\isom \gr^F_{-\dim X_i}\DR\left((j_i)_*\IC^H_{X_i\smallsetminus Z}[-\dim X_i]\right),
$$
and using Proposition \ref{prop: commutativity of graded de Rham} to $\mu_i$, we have
\begin{align*}
\gr^F_{-\dim X_i}\DR\left((\mu_i\circ j_i)_*\Q^H_{\widetilde X_i\smallsetminus E_i}\right)&\isom R(\mu_i)_*\left(\gr^F_{-\dim X_i}\DR\left((j_i)_*\Q^H_{\widetilde X_i\smallsetminus E_i}\right)\right)\\
&\isom R(\mu_i)_*\left(\omega_{\widetilde X_i}(E_i)[-\dim X_i]\right).
\end{align*}
The last line follows from the fact that the first nonzero term in the Hodge filtration of the mixed Hodge module $(j_i)_*\Q^H_{\widetilde X_i\smallsetminus E_i}[\dim X_i]$ is $\omega_{\widetilde X_i}(E_i)$ (see, for instance, \cite{Saito90}*{Section 3.b}). Combining the above, we have
$$
\bigoplus_i\gr^F_{-\dim X_i}\DR\left(j_*\left(\IC^H_{X_i\smallsetminus Z}[\dim X_i]\right)\right)\isom\bigoplus_i R(\mu_i)_*\left(\omega_{\widetilde X_i}(E_i)[\dim X_i]\right).
$$
Take the duality $R\Hom_{\O_X}(-, \omega_X^\bullet)$ on both sides. Then the term on the right is quasi-isomorphic to $R\mu_*\O_{\widetilde X}(-E)$ by Grothendieck duality for $\mu$. The term on the left is quasi-isomorphic to $\gr^F_0\DR\left(j_!\left(\bigoplus_i\IC_{X_i\smallsetminus Z}^H[-\dim X_i]\right)\right)$ by Lemma \ref{lem: duality} and the identity $\dual j_*=j_!\dual$ (\cite{Saito90}*{4.3.5}).
\end{proof}

Back to the proof of Theorem \ref{thm: Du Bois complex, rational singularities}, it suffices to prove the quasi-isomorphism
$$
\DB^0_{X,Z}\isom \gr^F_0\DR\left(j_!\left(\bigoplus_i\IC_{X_i\smallsetminus Z}^H[-\dim X_i]\right)\right)
$$
by Lemma \ref{lem: graded de Rham of j_!IC}. From Theorem \ref{thm: Du Bois via MHM}, we have $\gr^F_0\DR(j_!\Q^H_{X\smallsetminus Z})=\DB_{X,Z}^0$. Notice that we have
\begin{align*}
R\Hom_{\O_X}\left(\gr^F_0\DR\left(j_!\Q^H_{X\smallsetminus Z}\right),\omega^\bullet_X\right)&\isom \gr^F_0\DR\left(\dual\left(j_!\Q^H_{X\smallsetminus Z}\right)\right)\\&\isom\gr^F_0\DR\left(j_*\left(\dual\left(\Q^H_{X\smallsetminus Z}\right)\right)\right)
\end{align*}
from Lemma \ref{lem: duality} and the identity $\dual j_!=j_*\dual$ (\cite{Saito90}*{4.3.5}), which reduces to proving a quasi-isomorphism
$$
\gr^F_0\DR\left(j_*\left(\dual\left(\Q^H_{X\smallsetminus Z}\right)\right)\right)\isom \bigoplus_i\gr^F_{-\dim X_i}\DR\left(j_*\left(\IC^H_{X_i\smallsetminus Z}[\dim X_i]\right)\right).
$$

Since $X\smallsetminus Z$ has rational singularities (which implies Du Bois by \cite{Kovacs99}*{Theorem S} or \cite{Saito00}*{Theorem 5.4}), Proposition \ref{prop: criterion for rational} induces the quasi-isomorphism
$$
\gr^F_0\DR\left(\Q^H_{X\smallsetminus Z}\right)\isom \gr^F_0\DR\left(\bigoplus_i\IC^H_{X_i\smallsetminus Z}[-\dim X_i]\right)
$$
induced by \eqref{eqn: Q to IC}. Consider the mapping cone $C^\bullet\in D^bMHM(X\smallsetminus Z)$ of \eqref{eqn: Q to IC}:
$$
\Q^H_{X\smallsetminus Z}\to \bigoplus_i\IC^H_{X_i\smallsetminus Z}[-\dim X_i]\to C^\bullet\xrightarrow{+1},
$$
then $\gr^F_p\DR(C^\bullet)=0$ for all $p\ge0$ because of \cite{Saito00}*{Theorem 0.2} and Lemma \ref{lem: basic IC}. Applying the duality Lemma \ref{lem: duality} and Lemma \ref{lem: first HF} on $C^\bullet$, we have
\begin{align*}
\gr^F_0\DR\left(j_*\left(\dual\left(\Q^H_{X\smallsetminus Z}\right)\right)\right)&\isom \gr^F_0\DR\left(j_*\left(\bigoplus_i\IC^H_{X_i\smallsetminus Z}(\dim X_i)[\dim X_i]\right)\right)\\
&\isom \bigoplus_i\gr^F_{-\dim X_i}\DR\left(j_*\left(\IC^H_{X_i\smallsetminus Z}[\dim X_i]\right)\right).
\end{align*}

\begin{rmk}
\label{rmk: naturality}
    Notice that $\DB^0_{\widetilde X,E}=\O_{\widetilde X}(-E)$ since $\widetilde X$ and $E$ have Du Bois singularities. Therefore, one may expect that the quasi-isomorphism $\DB^0_{X,Z}\isom R\mu_*\O_{\widetilde X}(-E)$ is the natural morphism induced by the functoriality of the Du Bois complexes. This is expected to be true, but we do not claim this at the current stage, mainly because we do not know that the functoriality of Du Bois complexes is induced by that of the objects in $D^bMHM(X)$.
\end{rmk}

\subsection{Log rational pairs and inversion of adjunction}

As an immediate consequence of Theorem \ref{thm: Du Bois complex, rational singularities}, a Du Bois pair $(X,Z)$ exhibits particularly nice properties when $X\smallsetminus Z$ has rational singularities. In this case, we call $(X,Z)$ a \textit{log rational pair}, which can be characterized by Corollary \ref{cor: criterion for log rational pair}. After proving this, we give a short proof of the inversion of adjunction for rational and Du Bois singularities (Corollary \ref{cor: inversion of adjunction}) as a consequence.

\subsubsection{Proof of Corollary \ref{cor: criterion for log rational pair}}

For the converse statement, $X\smallsetminus Z$ has rational singularities by \cite{Kovacs00}*{Theorem 1} and $(X,Z)$ is a Du Bois pair by \cite{Kovacs11}*{Theorem 5.4}. Hence, it remains to prove the first statement.

Suppose $(X,Z)$ is a log rational pair. By Theorem \ref{thm: Du Bois complex, rational singularities}, all higher cohomologies of $R\mu_*\O_{\widetilde X}(-E)$ are vanishing, and $\I_{X,Z}\isom\mu_*\O_{\widetilde X}(-E)$ as $\O_X$-modules, which is not necessarily a natural morphism. It suffices to prove that the natural morphism
$$
\I_{X,Z}\to \mu_*\O_{\widetilde X}(-E)
$$
is an isomorphism. Let $\mu^n:X^n\to X$ be the disjoint union of normalizations of the irreducible components $X_i$ of $\overline{X\smallsetminus Z}$, and let $Z^n=(\mu^n)^{-1}(Z)$. Notice that $\mu:\widetilde X\to X$ factors through $X^n$, and it is easy to see that the natural morphism
$$
\mu^n_*\left(\I_{X^n,Z^n}\right)\to \mu_*\O_{\widetilde X}(-E)
$$
is an isomorphism. Hence, $\mu^n_*\left(\I_{X^n,Z^n}\right)$ is isomorphic to $\I_{X,Z}$ as a $\O_X$-module. Now, consider the commutative diagram (see e.g. \cite{Kovacs11}*{Proposition 3.15}) induced by natural morphisms:
\begin{equation*}
\xymatrix{
{\I_{X,Z}}\ar[r]^-{\phi}\ar[d]& {\DB^0_{X,Z}}\ar[d]^-{\rho}\\
{\mu^n_*\left(\I_{X^n,Z^n}\right)}\ar[r]&{\mu^n_*\left(\DB^0_{X^n,Z^n}\right)}
}
\end{equation*}
From the assumption, $\phi$ is an isomorphism. Additionally, $\rho$ is an isomorphism since $\mu^n$ is an isomorphism over $X\smallsetminus Z$ (see \cite{DuBois81}*{Proposition 4.11}). Consequently, the natural morphism
$$
\I_{X,Z}\to \mu^n_*\left(\I_{X^n,Z^n}\right)
$$
admits a splitting $\mu^n_*\left(\I_{X^n,Z^n}\right)\isom \I_{X,Z}\oplus R$, where the support of $R$ does not contain irreducible components of $X$. Recall that $\mu^n_*\left(\I_{X^n,Z^n}\right)\isom \I_{X,Z}$ as $\O_X$-module, and $\I_{X,Z}$ has no embedded primes because $X$ is reduced. Therefore we conclude that $R=0$ and so the natural morphism $\I_{X,Z}\to \mu_*\O_{\widetilde X}(-E)$ is an isomorphism.

\subsubsection{Proof of Corollary \ref{cor: inversion of adjunction}}

It suffices to prove when $(X,D)$ is a log rational pair. Indeed, Kov\'acs \cite{Kovacs00b}*{Theorem 3.2} proved that $X$ has Du Bois singularities when $D$ has Du Bois singularities (see also \cite{KS11b}). This implies that $(X,D)$ is a log rational pair. Accordingly, by Corollary \ref{cor: criterion for log rational pair}, we have a natural quasi-isomorphism
$$
\O_X(-D)\to R\mu_*\O_{\widetilde X}(-E).
$$
Twisting by $\O_X(D)$, we have a natural quasi-isomorphism $\O_X\to R\mu_*\O_{\widetilde X}(\mu^*D-E)$. This factors through $R\mu_*\O_{\widetilde X}$, since $\mu^*D-E$ is effective, and hence we obtain a left quasi-inverse of $\O_X\to R\mu_*\O_{\widetilde X}$. Therefore $X$ has rational singularities \cite{Kovacs00}*{Theorem 1}.

\section{Results on extensions of forms}
\label{sec: extending forms}

\subsection{Extension criteria using Hodge modules}

We recall and modify certain techniques from Kebekus-Schnell \cite{KS21} to establish criteria for extending (log) forms. We start with the following simplified version of \cite{KS21}*{Proposition 6.4}, and include its proof for the reader's convenience. Given a complex of $\O_X$-modules with coherent cohomologies, this lemma provides a useful criterion to check the $S_2$ property of the first nonzero cohomology using the duality functor.

\begin{lem}
\label{lem: S2 criterion}
Let $X$ be an equidimensional variety of dimension $n$ and $K^\bullet\in D^b_{coh}(\O_X)$ be an object such that $\H^i(K^\bullet)=0$ for all $i<0$ and $\Supp\left(\H^0(K^\bullet)\right)=X$. If
$$
\dim \left(\Supp\left(R^i\Hom_{\O_X}\left(K^\bullet,\omega_X^\bullet\right)\right)\right)\le -(i+2), \quad \forall i\ge -n+1,
$$
then $\H^0(K^\bullet)$ satisfies $S_2$.
\end{lem}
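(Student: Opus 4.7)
The plan is to invoke the standard duality characterization of the $S_2$ condition and then transfer the hypothesis on $K^\bullet$ to its first nonzero cohomology via a truncation triangle. Specifically, for a coherent sheaf $\F$ on an equidimensional $X$ of dimension $n$ with $\Supp \F = X$, local duality yields the criterion (compare \cite{KS21}*{Proposition 6.4})
$$
\F \text{ satisfies } S_2 \iff \dim\Supp R^i\Hom_{\O_X}(\F, \omega_X^\bullet) \le -(i+2) \text{ for all } i\ge -n+1.
$$
Thus it suffices to establish this dual bound for $\F = \H^0(K^\bullet)$.

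Consider the distinguished triangle
$$
\H^0(K^\bullet) \to K^\bullet \to \tau_{\ge 1}K^\bullet \xrightarrow{+1},
$$
which makes sense because $\H^i(K^\bullet)=0$ for $i<0$. Applying $R\Hom_{\O_X}(-,\omega_X^\bullet)$ and taking the long exact sequence of cohomology sheaves, we obtain
$$
R^i\Hom_{\O_X}(K^\bullet,\omega_X^\bullet) \to R^i\Hom_{\O_X}(\H^0(K^\bullet),\omega_X^\bullet) \to R^{i+1}\Hom_{\O_X}(\tau_{\ge 1}K^\bullet,\omega_X^\bullet).
$$
By hypothesis, the leftmost term has support of dimension at most $-(i+2)$ for every $i\ge -n+1$, so it remains to bound the rightmost term by the same quantity.

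For this, I use the elementary fact that for any coherent sheaf $\G$ on $X$ one has $\dim\Supp R^p\Hom_{\O_X}(\G,\omega_X^\bullet)\le -p$, the standard codimension bound coming from the structure of the dualizing complex. Building $\tau_{\ge 1}K^\bullet$ from its cohomology sheaves $\H^j(K^\bullet)$ with $j\ge 1$ via the canonical filtration triangles $\tau_{\le j-1} \to \tau_{\le j} \to \H^j(K^\bullet)[-j] \xrightarrow{+1}$, the dual of each $\H^j(K^\bullet)[-j]$ contributes to the $p$-th Ext sheaf a term of the form $R^{p+j}\Hom_{\O_X}(\H^j(K^\bullet),\omega_X^\bullet)$, whose support has dimension $\le -(p+j)\le -(p+1)$ since $j\ge 1$. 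Induction on the number of nonzero cohomologies of $\tau_{\ge 1}K^\bullet$ then yields $\dim\Supp R^p\Hom_{\O_X}(\tau_{\ge 1}K^\bullet,\omega_X^\bullet)\le -(p+1)$. Taking $p=i+1$ gives the required bound $-(i+2)$, and combining with the long exact sequence completes the proof. The main technical point, and the only real obstacle, is pinning down the precise $S_2$-via-duality criterion and the codimension bound for Ext sheaves; both are standard consequences of local duality on a variety admitting a dualizing complex, and I expect to simply cite \cite{KS21}*{Proposition 6.4} for the former.
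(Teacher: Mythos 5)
Your proof is correct and follows essentially the same route as the paper: both reduce the $S_2$ property of $\H^0(K^\bullet)$ to a dimension bound on the Ext sheaves via the criterion of \cite{HL10}*{Proposition 1.1.6 (ii)}, pass to the truncation triangle $\H^0(K^\bullet)\to K^\bullet\to\tau_{\ge1}K^\bullet$, and control $R^{i+1}\Hom(\tau_{\ge1}K^\bullet,\omega_X^\bullet)$ using the codimension estimate of \cite{HL10}*{Proposition 1.1.6 (i)} together with the canonical filtration of $\tau_{\ge1}K^\bullet$. The only cosmetic difference is the choice of reference for the $S_2$-via-duality criterion: the paper cites \cite{HL10} directly rather than \cite{KS21}*{Proposition 6.4}, which is natural since the lemma is itself presented as a simplified version of the latter.
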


\begin{proof}
It suffices to show that
$$
\dim \left(\Supp\left(R^i\Hom_{\O_X}\left(\H^0(K^\bullet),\omega_X^\bullet\right)\right)\right)\le -(i+2), \quad \forall i\ge -n+1,
$$
(see e.g. \cite{HL10}*{Proposition 1.1.6 (ii)}). Consider a distinguished triangle
$$
\H^0(K^\bullet)\to K^\bullet\to \tau_{\ge1}K^\bullet\xrightarrow{+1}
$$
where $\tau_{\ge1}K^\bullet$ is the truncation of $K^\bullet$ with cohomologies in degree $\ge 1$. After dualizing and taking cohomologies, we obtain the following exact sequence
$$
R^i\Hom_{\O_X}(K^\bullet,\omega_X^\bullet)\to R^i\Hom_{\O_X}(\H^0(K^\bullet),\omega_X^\bullet)\to R^{i+1}\Hom_{\O_X}(\tau_{\ge1}K^\bullet,\omega_X^\bullet).
$$
From the assumption, it suffices to prove
$$
\dim \left(\Supp\left(R^{i+1}\Hom_{\O_X}\left(\tau_{\ge 1}K^\bullet,\omega_X^\bullet\right)\right)\right)\le -(i+2), \quad \forall i\ge -n+1,
$$
which easily follows from \cite{HL10}*{Proposition 1.1.6 (i)} and the spectral sequence associated to the canonical filtration of $\tau_{\ge1}K^\bullet$.
\end{proof}

The next two lemmas build bridges between $p$-forms (with log poles) on a desingularization of a normal variety with the Hodge module associated to the intersection complex. Lemma \ref{lem: pushforward forms via IC} is \cite{KS21}*{Proposition 8.1}, for which we included a proof for the reader's convenience. Lemma \ref{lem: pushforward log forms via IC} generalizes \cite{KS21}*{Proposition 9.5} for the purpose of analyzing forms that extend with log poles over an arbitrary subvariety, not only over the singular locus. The main point is that the first nonzero cohomology of a certain graded de Rham complex describes the sheaf of forms that extend to forms on a resolution of singularities.

\begin{lem}
\label{lem: pushforward forms via IC}
Let $X$ be a normal variety of dimension $n$. Then the cohomologies of $\gr^F_{-p}\DR\left(\IC_X^H\right)$ are supported on degrees at least $-n+p$, and there is an isomorphism
$$
\mu_*\Omega^p_{\widetilde X}\isom \H^{-n+p}\left(\gr^F_{-p}\DR\left(\IC_X^H\right)\right)
$$
for a resolution of singularities $\mu:\widetilde X\to X$.
\end{lem}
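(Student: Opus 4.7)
The plan is to apply Saito's Decomposition Theorem to $\mu_*\Q^H_{\widetilde X}$ and then pass through the graded de Rham functor, in the same spirit as Lemma \ref{lem: basic IC} but tracking a general Hodge index $-p$. On the smooth variety $\widetilde X$, the Hodge filtration on $\omega_{\widetilde X}$ jumps precisely at $-n$, so inserting this into \eqref{eqn: filtration on DR} yields the preliminary computation
$$
\gr^F_{-p}\DR(\Q^H_{\widetilde X}) \isom \Omega^p_{\widetilde X}[-p].
$$
The proof of Lemma \ref{lem: basic IC} already produces a decomposition $\mu_*\Q^H_{\widetilde X} \isom \IC^H_X[-n] \oplus \N^\bullet$ in which $\N^\bullet$ is supported on $\mathrm{Sing}(X)$, a set of codimension at least two because $X$ is normal. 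Applying $\gr^F_{-p}\DR$ and using Proposition \ref{prop: commutativity of graded de Rham} then gives
$$
R\mu_*\Omega^p_{\widetilde X}[-p] \isom \gr^F_{-p}\DR(\IC^H_X)[-n] \oplus \gr^F_{-p}\DR(\N^\bullet)
$$
in $D^b_{coh}(X, \O_X)$.

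For the degree bound, Lemma \ref{lem: basic IC} guarantees that $\gr^F_q\DR(\IC^H_X) = 0$ for $q < -n$, so the first nonzero term in the Hodge filtration of the $\D$-module underlying $\IC^H_X$ sits at index $-n$. The explicit shape of the Hodge filtration on the de Rham complex in \eqref{eqn: filtration on DR} then forces every component of $\gr^F_{-p}\DR(\IC^H_X)$ to vanish in degrees below $-n+p$, and hence the cohomology sheaves are supported in degrees at least $-n+p$, as claimed. To extract the isomorphism in the lemma, I would take $\H^0$ of the displayed decomposition, obtaining
$$
\mu_*\Omega^p_{\widetilde X} \isom \H^{-n+p}\left(\gr^F_{-p}\DR(\IC^H_X)\right) \oplus \H^p\left(\gr^F_{-p}\DR(\N^\bullet)\right).
$$
Since $\N^\bullet$ is supported on $\mathrm{Sing}(X)$, so is the second summand, while $\mu_*\Omega^p_{\widetilde X}$ is torsion-free, being the proper pushforward of a locally free sheaf under a birational morphism to an irreducible normal variety. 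A torsion-free sheaf on an integral scheme admits no nonzero subsheaf supported on a proper subscheme, so the second summand vanishes and the desired isomorphism follows.

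The main obstacle I anticipate is purely bookkeeping: keeping the degree shifts consistent between the $[-n]$ coming from the Saito decomposition on the $\D$-module side, the $[-p]$ from the graded de Rham computation on the smooth side, and the target cohomological index $-n+p$ on the singular side. A secondary subtlety is the torsion-freeness argument, which uses in an essential way that $X$ is normal and irreducible so that the generic stalk controls all sections of the pushforward.
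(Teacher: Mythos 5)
Your proof is correct and follows the same route as the paper: Saito's Decomposition Theorem applied to $\mu_*\Q^H_{\widetilde X}$, the computation $\gr^F_{-p}\DR(\Q^H_{\widetilde X})\isom\Omega^p_{\widetilde X}[-p]$, commutativity with $R\mu_*$, and the torsion-freeness of $\mu_*\Omega^p_{\widetilde X}$ to kill the summand coming from $\N^\bullet$. One small slip: to land on your displayed isomorphism you should take $\H^p$, not $\H^0$, of the decomposition $R\mu_*\Omega^p_{\widetilde X}[-p]\isom\gr^F_{-p}\DR(\IC^H_X)[-n]\oplus\gr^F_{-p}\DR(\N^\bullet)$; the stated formula is nevertheless correct.
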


\begin{proof}
The first statement is immediate from the definition of the graded de Rham functor and Lemma \ref{lem: basic IC}, which proves that the index of the first nonzero term in the Hodge filtration of $\IC^H_X$ is $-n$.

From Saito's Decomposition Theorem, we have
$$
\mu_*\Q^H_{\widetilde X}[n]\isom \IC^H_X\oplus \N^\bullet.
$$
By Proposition \ref{prop: commutativity of graded de Rham}, we have
$$
R\mu_*\left(\gr^F_{-p}\DR\left(\Q^H_{\widetilde X}[n]\right)\right)\isom \gr^F_{-p}\DR\left(\IC_X^H\right)\oplus \gr^F_{-p}\DR\left(\N^\bullet\right).
$$
Recall that $\gr^F_{-p}\DR\left(\Q^H_{\widetilde X}[n]\right)=\Omega^p_{\widetilde X}[n-p]$, from which we conclude that
$$
\mu_*\Omega^p_{\widetilde X}\isom \H^{-n+p}\left(\gr^F_{-p}\DR\left(\IC_X^H\right)\right)\oplus \H^{-n+p}\left(\gr^F_{-p}\DR\left(\N^\bullet\right)\right).
$$
Then the term $\H^{-n+p}\left(\gr^F_{-p}\DR\left(\N^\bullet\right)\right)$ of the direct sum is zero, because this is supported on a proper subvariety of $X$ while the left hand side is a torsion free sheaf.
\end{proof}

\begin{lem}
\label{lem: pushforward log forms via IC}
Let $(X,Z)$ be a pair consisting of a normal variety $X$ of dimension $n$ and a subvariety $Z$. Let $j:X\smallsetminus Z\to X$ be the open embedding and $\M_{X,Z}:=\H^0(j_*\IC^H_{X\smallsetminus Z})\in MHM(X)$. Then the cohomologies of $\gr^F_{-p}\DR\left(\M_{X,Z}\right)$ are supported on degrees at least $-n+p$, and there is an isomorphism
$$
\mu_*\Omega_{\widetilde X}^p(\log E)\isom \H^{-n+p}\left(\gr^F_{-p}\DR\left(\M_{X,Z}\right)\right)
$$
for a log resolution of singularities $\mu:(\widetilde X,E)\to (X,Z)$.
\end{lem}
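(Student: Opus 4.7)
The plan is to mirror the proof of Lemma \ref{lem: pushforward forms via IC}, but with the constant Hodge module on $\widetilde X$ replaced by its $\widetilde j_*$-extension from the complement of the SNC divisor $E$, so that log forms appear in place of ordinary forms. Write $\widetilde j : \widetilde X \smallsetminus E \hookrightarrow \widetilde X$ for the open embedding and $\widetilde \mu := \mu|_{\widetilde X \smallsetminus E} : \widetilde X \smallsetminus E \to X \smallsetminus Z$; the latter is proper birational since $E = \mu^{-1}(Z)$. The standard description of the Hodge filtration on the log de Rham complex (see e.g. \cite{Saito90}*{Section 3.b}) gives $\gr^F_{-p}\DR(\widetilde j_* \Q^H_{\widetilde X \smallsetminus E}[n]) \isom \Omega^p_{\widetilde X}(\log E)[n-p]$. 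Pushing forward by $\mu$ and using Proposition \ref{prop: commutativity of graded de Rham} together with the functorial identity $(\mu \circ \widetilde j)_* = (j \circ \widetilde \mu)_*$ produces
\[
R\mu_* \Omega^p_{\widetilde X}(\log E)[n-p] \isom \gr^F_{-p}\DR\!\left(j_* \widetilde \mu_* \Q^H_{\widetilde X \smallsetminus E}[n]\right).
\]

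Next I would apply Saito's Decomposition Theorem to the proper birational morphism $\widetilde \mu$ to obtain $\widetilde \mu_* \Q^H_{\widetilde X \smallsetminus E}[n] \isom \IC^H_{X \smallsetminus Z} \oplus \N^\bullet$ in $D^bMHM(X\smallsetminus Z)$, with $\N^\bullet$ supported on a proper subvariety of $X \smallsetminus Z$. After applying $j_*$ and $\gr^F_{-p}\DR$, this yields a direct sum decomposition of the left-hand side. To pass from $j_* \IC^H_{X \smallsetminus Z}$ to $\M_{X,Z}$, I would use that $j_*$ is left $t$-exact on MHM (so $\H^{<0}(j_* \IC^H_{X \smallsetminus Z}) = 0$ and $\H^0(j_*\IC^H_{X \smallsetminus Z}) = \M_{X,Z}$). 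Since Lemma \ref{lem: basic IC} gives $-n$ as the first nonzero Hodge filtration index of $\IC^H_{X \smallsetminus Z}$, Lemma \ref{lem: first HF} forces $\gr^F_p\DR(j_*\IC^H_{X\smallsetminus Z}) = 0$ for $p < -n$, and Lemma \ref{lem: graded de Rham of first cohomology} with $k = -n$ then gives both the vanishing $\gr^F_p\DR(\M_{X,Z}) = 0$ for $p < -n$ and the isomorphism
\[
\H^{-n+p}\!\left(\gr^F_{-p}\DR(j_*\IC^H_{X \smallsetminus Z})\right) \isom \H^{-n+p}\!\left(\gr^F_{-p}\DR(\M_{X,Z})\right).
\]
Combining this vanishing with \cite{Saito88}*{Lemme 2.1.6} (already invoked in the proof of Lemma \ref{lem: graded de Rham of first cohomology}) shows that the cohomologies of $\gr^F_{-p}\DR(\M_{X,Z})$ are supported in degrees $\geq -n + p$, yielding the first claim of the lemma.

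Finally, taking $\H^{-n+p}$ of the direct sum decomposition identifies $\mu_*\Omega^p_{\widetilde X}(\log E)[n-p]$ in degree $-n+p$ with
\[
\H^{-n+p}\!\left(\gr^F_{-p}\DR(j_*\IC^H_{X\smallsetminus Z})\right) \oplus \H^{-n+p}\!\left(\gr^F_{-p}\DR(j_*\N^\bullet)\right).
\]
Since $X$ is normal and irreducible and $\widetilde X$ is smooth, the sheaf $\mu_*\Omega^p_{\widetilde X}(\log E)$ is torsion-free on $X$, while the second summand is supported on a proper subvariety of $X$ (as $\N^\bullet$ is supported on a proper subvariety of $X \smallsetminus Z$). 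Hence the second summand vanishes, and combined with the isomorphism above this gives the desired identification $\mu_*\Omega^p_{\widetilde X}(\log E) \isom \H^{-n+p}(\gr^F_{-p}\DR(\M_{X,Z}))$. I expect the main subtlety to be the careful bookkeeping of the $t$-structure and of the Hodge filtration indices under the non-exact functor $j_*$; the whole point of Lemmas \ref{lem: first HF} and \ref{lem: graded de Rham of first cohomology} is to isolate the single cohomology $\M_{X,Z}$ as the only relevant piece of $j_*\IC^H_{X\smallsetminus Z}$ for the graded de Rham computation in the lowest nonzero degree.
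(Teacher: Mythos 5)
Your proposal is correct and mirrors the paper's own proof step for step: it uses the same commutative square, the same application of Saito's Decomposition Theorem over $X\smallsetminus Z$ followed by $j_*$, the same invocation of Lemmas \ref{lem: basic IC}, \ref{lem: first HF}, and \ref{lem: graded de Rham of first cohomology} to isolate $\M_{X,Z}$ in the lowest nonzero cohomological degree, and the same torsion-freeness argument to kill the $\N^\bullet$-summand. The only difference is cosmetic notation ($\widetilde j$, $\widetilde\mu$) versus the paper's overloaded $j$ and $\mu$ in the diagram.
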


\begin{proof}
Notice that the direct image functor $j_*$ of an open embedding is left exact with respect to the perverse structure. Hence, all the cohomologies of $j_*\IC^H_{X\smallsetminus Z}$ in negative degrees are zero. Applying Lemma \ref{lem: first HF}, Lemma \ref{lem: graded de Rham of first cohomology}, and Lemma \ref{lem: basic IC}, it is easy to see that the index of the first nonzero term in the Hodge filtration of $\M_{X,Z}$ is $-n$. Thus, the cohomologies of $\gr^F_{-p}\DR\left(\M_{X,Z}\right)$ are supported on degrees at least $-n+p$.

Consider the following commuting diagram:
\begin{equation*}
\xymatrix{
{E}\ar[r]\ar[d]& {\widetilde X}\ar[d]_-{\mu}& {\widetilde X\smallsetminus E}\ar[l]_-{j}\ar[d]_-{\mu}\\
{Z}\ar[r]&{X}&{X\smallsetminus Z}\ar[l]_-{j}
}
\end{equation*}
From Saito's Decomposition Theorem over $X\smallsetminus Z$, we have
$$
\mu_*\Q^H_{\widetilde X\smallsetminus E}[n]\isom \IC^H_{X\smallsetminus Z}\oplus \N^\bullet.
$$
Taking the direct image functor $j_*$ on both sides, we have an isomorphism
$$
\mu_*\left(j_*\Q^H_{\widetilde X\smallsetminus E}[n]\right)\isom j_*\IC^H_{X\smallsetminus Z}\oplus j_*\N^\bullet
$$
in $D^bMHM(X)$. Recall that $\gr^F_{-p}\DR\left(j_*\Q^H_{\widetilde X\smallsetminus E}[n]\right)=\Omega^p_{\widetilde X}(\log E)[n-p]$, from which we conclude
$$
\H^{-n+p}\left(\gr^F_{-p}\DR\left(j_*\IC^H_{X\smallsetminus Z}\right)\right)\isom\mu_*\Omega_{\widetilde X}^p(\log E) 
$$
as in the proof of Lemma \ref{lem: pushforward forms via IC}. Applying Lemma \ref{lem: graded de Rham of first cohomology}, we obtain
$$
\H^{-n+p}\left(\gr^F_{-p}\DR\left(j_*\IC^H_{X\smallsetminus Z}\right)\right)\isom\H^{-n+p}\left(\gr^F_{-p}\DR\left(\M_{X,Z}\right)\right),
$$
which completes the proof.
\end{proof}

Combining Lemma \ref{lem: pushforward forms via IC} or Lemma \ref{lem: pushforward log forms via IC} with Lemma \ref{lem: S2 criterion}, we have the following two propositions which provide a sufficient condition for the extension of (log) forms.

\begin{prop}
\label{prop: criterion for extending forms}
Let $X$ be a normal variety of dimension $n$. If we have
$$
\dim \left(\Supp\left(\H^{-p+k}\gr^F_{p-n}\DR\left(\IC^H_X\right)\right)\right)\le n-k-2, \quad \forall k\ge 1,
$$
then $\mu_*\Omega_{\widetilde X}^p$ is reflexive.
\end{prop}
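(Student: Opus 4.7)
The plan is to apply the $S_2$-criterion of Lemma \ref{lem: S2 criterion} to the shifted complex
$$K^\bullet := \gr^F_{-p}\DR(\IC^H_X)[-n+p].$$
Since $X$ is normal and $\mu_*\Omega^p_{\widetilde X}$ is torsion-free (it agrees with $\Omega^p_X$ on the dense smooth locus of $X$), establishing the $S_2$ property of this sheaf will be enough to conclude reflexivity. The nontrivial range is $0 \le p \le n$, outside of which the statement is vacuous.

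To set up Lemma \ref{lem: S2 criterion}, I would first invoke Lemma \ref{lem: pushforward forms via IC}, which both identifies $\H^0(K^\bullet) \isom \mu_*\Omega^p_{\widetilde X}$ and guarantees $\H^i(K^\bullet) = 0$ for $i < 0$. The key computation is then the Grothendieck dual of $K^\bullet$. Combining the duality Lemma \ref{lem: duality} with the polarization $\IC^H_X(n) \isom \dual \IC^H_X$ and the Tate-twist convention $F_p(\M(k)) = F_{p-k}\M$ recovers \eqref{eqn: duality for IC}:
$$R\Hom_{\O_X}\!\left(\gr^F_{-p}\DR(\IC^H_X),\, \omega^\bullet_X\right) \isom \gr^F_{p-n}\DR(\IC^H_X).$$
Applying the shift by $[-n+p]$, one obtains
$$R^i\Hom_{\O_X}(K^\bullet, \omega^\bullet_X) \isom \H^{i+n-p}\!\left(\gr^F_{p-n}\DR(\IC^H_X)\right).$$

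Finally, I would reindex by $k := i+n$, under which the hypothesis of the proposition translates verbatim into the support bound $\dim \Supp\!\left(R^i\Hom(K^\bullet,\omega^\bullet_X)\right) \le -(i+2)$ for all $i \ge -n+1$ required by Lemma \ref{lem: S2 criterion}. That lemma then yields that $\mu_*\Omega^p_{\widetilde X}$ is $S_2$, hence reflexive on the normal variety $X$. There is no genuine obstacle in the argument; the entire proof is bookkeeping, and the only care needed is in reconciling three indexing conventions — the Hodge-filtration degree, the cohomological degree of the graded de Rham complex, and the Ext degree — which is precisely why the hypothesis is phrased via $\gr^F_{p-n}$ rather than $\gr^F_{-p}$.
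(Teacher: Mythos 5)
Your proposal is correct and coincides with the paper's proof: both identify $\mu_*\Omega^p_{\widetilde X}\isom\H^0\bigl(\gr^F_{-p}\DR(\IC^H_X)[-n+p]\bigr)$ via Lemma \ref{lem: pushforward forms via IC}, compute the Grothendieck dual using \eqref{eqn: duality for IC}, and then feed the support bound into Lemma \ref{lem: S2 criterion} after the substitution $k=i+n$. You merely spell out more explicitly the step the paper takes for granted, that on a normal variety a torsion-free $S_2$ sheaf is reflexive.
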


\begin{proof}
By Lemma \ref{lem: pushforward forms via IC}, $\mu_*\Omega_{\widetilde X}^p$ is reflexive if and only if $\H^{-n+p}\left(\gr^F_{-p}\DR\left(\IC_X^H\right)\right)$ satisfies $S_2$. We apply Lemma \ref{lem: S2 criterion} to $\gr^F_{-p}\DR\left(\IC_X^H\right)[-n+p]$ with Lemma \ref{lem: duality}: if
$$
\dim \left(\Supp\left(\H^{i+n-p}\gr^F_{p-n}\DR\left(\IC^H_X\right)\right)\right)\le -(i+2), \quad \forall i\ge -n+1,
$$
then $\mu_*\Omega_{\widetilde X}^p$ is reflexive. This is equivalent to the proposition if $k$ is substituted by $i+n$.
\end{proof}

\begin{prop}
\label{prop: criterion for extending log forms}
In the setting of Lemma \ref{lem: pushforward log forms via IC}, let $\N_{X,Z}:=\dual \M_{X,Z}(-n)$. If we have
$$
\dim \left(\Supp\left(\H^{-p+k}\gr^F_{p-n}\DR\left(\N_{X,Z}\right)\right)\right)\le n-k-2, \quad \forall k\ge 1,
$$
then $\mu_*\Omega^p_{\widetilde X}(\log E)$ is reflexive.
\end{prop}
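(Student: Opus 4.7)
The plan is to mirror the proof of Proposition \ref{prop: criterion for extending forms}, replacing $\IC^H_X$ with $\M_{X,Z}$ and Lemma \ref{lem: pushforward forms via IC} with Lemma \ref{lem: pushforward log forms via IC}. First I would invoke Lemma \ref{lem: pushforward log forms via IC} to reduce the reflexivity of $\mu_*\Omega^p_{\widetilde X}(\log E)$ to the $S_2$-property of the zeroth cohomology of the shifted complex $K^\bullet := \gr^F_{-p}\DR(\M_{X,Z})[-n+p]$. The hypotheses of Lemma \ref{lem: S2 criterion} on $K^\bullet$ are easy to verify: its cohomology is concentrated in nonnegative degrees by Lemma \ref{lem: pushforward log forms via IC}, and $\H^0(K^\bullet)=\mu_*\Omega^p_{\widetilde X}(\log E)$ is supported on all of the equidimensional normal variety $X$ since $\mu$ is an isomorphism over the dense open $X_{sm}\smallsetminus Z$ where this sheaf agrees with $\Omega^p$.

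The core step is to rewrite the hypothesis of Lemma \ref{lem: S2 criterion} on $R^i\Hom_{\O_X}(K^\bullet,\omega^\bullet_X)$ in terms of $\N_{X,Z}$. For this, Lemma \ref{lem: duality} gives $R\Hom_{\O_X}(\gr^F_{-p}\DR(\M_{X,Z}),\omega^\bullet_X)\isom \gr^F_p\DR(\dual\M_{X,Z})$. By definition $\dual\M_{X,Z}\isom \N_{X,Z}(n)$, and by the recipe for the Tate twist on the Hodge filtration, $\gr^F_p\DR(\N_{X,Z}(n))\isom\gr^F_{p-n}\DR(\N_{X,Z})$. Combining these with the cohomological shift $[n-p]$ coming from the definition of $K^\bullet$, I obtain
$$
R^i\Hom_{\O_X}(K^\bullet,\omega^\bullet_X)\isom \H^{i+n-p}\gr^F_{p-n}\DR(\N_{X,Z})
$$
for every $i$.

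The condition of Lemma \ref{lem: S2 criterion} then demands that the support of this sheaf have dimension at most $-(i+2)$ for all $i\ge -n+1$. After the change of variable $k=i+n$, this becomes exactly the hypothesis $\dim\Supp\H^{-p+k}\gr^F_{p-n}\DR(\N_{X,Z})\le n-k-2$ for $k\ge 1$ of the proposition, yielding the desired $S_2$-property and hence the reflexivity of $\mu_*\Omega^p_{\widetilde X}(\log E)$. The main obstacle, such as it is, is purely bookkeeping: tracking the Tate-twist shift together with the cohomological shift so that the two indexings align. The specific twist $(-n)$ in the definition $\N_{X,Z}:=\dual\M_{X,Z}(-n)$ is chosen precisely to make this match.
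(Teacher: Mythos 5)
Your proposal is correct and is exactly the route the paper takes: the paper's proof of this proposition is a one-line remark that the argument of Proposition \ref{prop: criterion for extending forms} goes through unchanged once Lemma \ref{lem: pushforward forms via IC} is replaced by Lemma \ref{lem: pushforward log forms via IC}, and you have simply spelled out the bookkeeping (Lemma \ref{lem: duality}, the Tate twist convention $\gr^F_q\DR(\M(k))\isom\gr^F_{q-k}\DR(\M)$, and the shift/index change $k=i+n$) that makes this literal substitution work. All of your index manipulations check out.
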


\begin{proof}
The proof of Proposition \ref{prop: criterion for extending forms} remains unchanged when replacing Lemma \ref{lem: pushforward forms via IC} with Lemma \ref{lem: pushforward log forms via IC}.
\end{proof}

\subsection{Proofs of the main theorems on extending forms}

We first establish certain lemmas used in the proof of Theorems \ref{thm: p-forms extend} and \ref{thm: p-log form extends}, and then proceed to prove these theorems, in the order: \ref{thm: p-forms extend}, \ref{thm: p-log form extends}, and \ref{thm: extending forms}.

One of the main ideas in \cite{KS21} is to cut a variety with general hyperplane sections and apply induction. Adapting this to our situation, we provide two versions describing the graded de Rham complexes after cutting; one is used for the extension of $p$-forms and the other is for the extension of $p$-forms with log poles.

\begin{lem}
\label{lem: slicing IC for forms}
Let $X$ be a normal variety, embeddable into a projective space $Y$. For a general hyperplane $L\subset Y$, there exists a short exact sequence of complexes
$$
0\to N^*_{L/Y}\otimes_{\O_L}\gr^F_{p+1}\DR\left(\IC_{X_L}^H\right)\to \O_L\tensor_{\O_Y}\gr^F_{p}\DR\left(\IC_{X}^H\right)\to \gr^F_{p}\DR\left(\IC_{X_L}^H\right)[1]\to 0
$$
where $X_L=L\cap X$ and $N^*_{L/Y}$ is the conormal bundle of $L\subset Y$.
\end{lem}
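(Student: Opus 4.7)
The plan is to combine a Koszul-type decomposition of the de Rham complex restricted to $L$ with Saito's identification of the non-characteristic inverse image of the intersection cohomology Hodge module.

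First, I would verify that for a sufficiently general hyperplane $L \subset Y$, the inclusion $i \colon L \hookrightarrow Y$ is non-characteristic with respect to $\IC^H_X$, and invoke Saito's theory to obtain an isomorphism of filtered $\D_L$-modules of the form $(i^* \IC^H_X, F_\bullet) \simeq N^*_{L/Y} \otimes_{\O_L} (\IC^H_{X_L}, F_{\bullet+1})$. The conormal twist and the shift of the Hodge filtration by one are the characteristic features of non-characteristic pullback for right $\D$-modules in codimension one, so that the index of the first nonzero term of the Hodge filtration shifts up by one while the underlying coherent sheaf acquires a twist by $N^*_{L/Y}$.

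The core computation then uses the conormal exact sequence $0 \to T_L \to T_Y|_L \to N_{L/Y} \to 0$. Because $N_{L/Y}$ has rank one, taking exterior powers yields, for each $k \geq 1$, the short exact sequence
$$
0 \to \wedge^k T_L \to \wedge^k T_Y|_L \to \wedge^{k-1} T_L \otimes_{\O_L} N_{L/Y} \to 0.
$$
Tensoring with $\gr^F_{p-k} \IC^H_X|_L$ at each position $-k$ of the restricted graded de Rham complex and assembling across $k$ produces a short exact sequence of complexes
$$
0 \to A^\bullet \to \O_L \otimes_{\O_Y} \gr^F_p \DR(\IC^H_X) \to C^\bullet \to 0,
$$
with $A^{-k} = \gr^F_{p-k} \IC^H_X|_L \otimes \wedge^k T_L$ and $C^{-k} = \gr^F_{p-k} \IC^H_X|_L \otimes \wedge^{k-1} T_L \otimes N_{L/Y}$.

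To conclude, I would identify the outer complexes using the first step. By construction $A^\bullet$ equals the graded de Rham complex $\gr^F_p \DR_L(i^* \IC^H_X)$ on $L$, which becomes $N^*_{L/Y} \otimes \gr^F_{p+1} \DR(\IC^H_{X_L})$ after substituting the identification of the first paragraph. For $C^\bullet$, reindexing by $k' = k - 1$ and factoring out $N_{L/Y}$ gives $C^\bullet \simeq \bigl(N_{L/Y} \otimes \gr^F_{p-1} \DR_L(i^* \IC^H_X)\bigr)[1]$; applying the same identification and using $N_{L/Y} \otimes N^*_{L/Y} = \O_L$ produces $\gr^F_p \DR(\IC^H_{X_L})[1]$, matching the claimed quotient. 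The main obstacle is the first step: pinning down the precise form of the non-characteristic inverse image $i^* \IC^H_X$ as a filtered $\D_L$-module, with the correct conormal twist and Hodge-filtration shift, which requires careful attention to Saito's conventions for non-characteristic restriction of right $\D$-modules in codimension one. Once this identification is in place, the rest of the argument is routine bookkeeping with the conormal sequence.
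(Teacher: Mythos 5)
Your argument is correct and takes essentially the same approach as the paper: the paper verifies noncharacteristicity, invokes $\H^{-1}(\iota^*\IC^H_X)\isom\IC^H_{X_L}$ via Schnell's theorem, and then cites Kebekus--Schnell Proposition 4.17 for the Koszul-type decomposition that you reconstruct by hand from the normal bundle sequence. The filtered identification of the noncharacteristic restriction (conormal twist together with the shift $F_\bullet \mapsto F_{\bullet+1}$), which you correctly flag as the crux, is exactly the content being outsourced to that cited proposition.
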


\begin{proof}
For a general hyperplane $L$, the embedding $\iota: L\hookrightarrow Y$ is non-characteristic for $\IC^H_X$. Therefore from Saito's theory, we have the non-characteristic pullback $\H^{-1}\left(\iota^*\IC^H_X\right)\isom \IC^H_{X_L}$ (see also \cite{Schnell16}*{Theorem 8.3}), and the statement in the lemma is equivalent to \cite{KS21}*{Proposition 4.17}.
\end{proof}

\begin{lem}
\label{lem: slicing IC for log forms}
In the setting of Lemma \ref{lem: pushforward log forms via IC}, suppose $X$ is embeddable into a projective space $Y$. Let $\N_{X,Z}:=\dual \M_{X,Z}(-n)$. For a general hyperplane $L\subset Y$, there exists a short exact sequence of complexes
$$
0\to N^*_{L/Y}\otimes_{\O_L}\gr^F_{p+1}\DR\left(\N_{X_L,Z_L}\right)\to \O_L\tensor_{\O_Y}\gr^F_{p}\DR\left(\N_{X,Z}\right)\to \gr^F_{p}\DR\left(\N_{X_L,Z_L}\right)[1]\to 0
$$
where $X_L=L\cap X$, $Z_L=L\cap Z$, and $N^*_{L/Y}$ is the conormal bundle of $L\subset Y$. Here, $\N_{X_L,Z_L}:=\dual \M_{X_L,Z_L}(-n+1)$ is a mixed Hodge module for the pair $(X_L,Z_L)$.
\end{lem}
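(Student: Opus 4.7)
The plan is to follow the strategy of Lemma \ref{lem: slicing IC for forms}: for a general hyperplane $L$, I would first establish the non-characteristic pullback identification
$$
\H^{-1}\bigl(\iota^* \N_{X,Z}\bigr) \isom \N_{X_L,Z_L},
$$
and then invoke \cite{KS21}*{Proposition 4.17}, which produces the desired short exact sequence of graded de Rham complexes from any such identification. The substantive content is the identification itself.

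To prove it, I would first rewrite $\N_{X,Z}$ in a form where base change applies cleanly. Using the identity $\dual j_* = j_!\dual$ from \cite{Saito90}*{4.3.5}, the polarization $\dual \IC^H_{X\smallsetminus Z}\isom \IC^H_{X\smallsetminus Z}(n)$, and the compatibility of duality with $\H^0$ on mixed Hodge modules, one checks
$$
\N_{X,Z} \;=\; \dual\bigl(\H^0(j_* \IC^H_{X\smallsetminus Z})\bigr)(-n) \;\isom\; \H^0\bigl(j_! \IC^H_{X\smallsetminus Z}\bigr),
$$
the Tate twists canceling precisely because $\IC^H_{X\smallsetminus Z}$ is pure of weight $n$. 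For general $L$ the embedding $\iota$ is non-characteristic for $\IC^H_{X\smallsetminus Z}$, so $\iota^*\IC^H_{X\smallsetminus Z} \isom \IC^H_{X_L\smallsetminus Z_L}[1]$ exactly as used in the proof of Lemma \ref{lem: slicing IC for forms}. Applying the base change $\iota^* j_! \isom j_!\iota^*$ along the Cartesian square
\[
\xymatrix{
{X_L \smallsetminus Z_L} \ar[r]^-{j} \ar[d]_-{\iota} & {X_L} \ar[d]^-{\iota} \\
{X\smallsetminus Z} \ar[r]^-{j} & {X}
}
\]
yields $\iota^* j_! \IC^H_{X\smallsetminus Z} \isom \bigl(j_! \IC^H_{X_L\smallsetminus Z_L}\bigr)[1]$. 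Since $j_!$ is right $t$-exact for the perverse $t$-structure, $j_!\IC^H_{X\smallsetminus Z}$ is supported in non-positive perverse degrees with $\H^0 = \N_{X,Z}$; using that $\iota^*$ shifts each perverse cohomology into degree $-1$ (so that the resulting spectral sequence degenerates), passing to $\H^{-1}$ on both sides produces $\H^{-1}(\iota^* \N_{X,Z}) \isom \N_{X_L, Z_L}$.

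With this identification in hand, the short exact sequence follows verbatim from \cite{KS21}*{Proposition 4.17}, exactly as in the proof of Lemma \ref{lem: slicing IC for forms}. The main obstacles I anticipate are the bookkeeping of Tate twists in the duality manipulation, and verifying the base change $\iota^* j_! \isom j_! \iota^*$ at the level of complexes of mixed Hodge modules; the latter is the Cartesian compatibility stated immediately after Theorem \ref{thm: Saito main}, lifted from the corresponding base change for the underlying $\Q$-complexes.
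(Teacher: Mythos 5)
Your proof is correct and takes essentially the same route as the paper. The paper's proof dualizes the triangle $\M_{X,Z}\to j_*\IC^H_{X\smallsetminus Z}\to \tau_{\ge1}(j_*\IC^H_{X\smallsetminus Z})$ to get $\tau_{<0}(j_!\IC^H_{X\smallsetminus Z})\to j_!\IC^H_{X\smallsetminus Z}\to \N_{X,Z}$, applies $\iota^*$ using the base change $\iota^*j_!=j_!\iota^*$, and then reads off $\H^{-1}$ of the resulting triangle, noting the first term lives in perverse degree $\le -2$; you instead first isolate the identity $\N_{X,Z}\isom\H^0(j_!\IC^H_{X\smallsetminus Z})$ and run the degenerating perverse-cohomology spectral sequence for $\iota^*$ applied to $j_!\IC^H_{X\smallsetminus Z}$. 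These are two presentations of the same computation, and the conclusion via \cite{KS21}*{Proposition 4.17} is identical.
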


\begin{proof}
Recall that the direct image functor $j_*$ of an open embedding is left exact with respect to the perverse structure. Hence, we have a distinguished triangle
$$
\M_{X,Z}\to j_*\IC^H_{X\smallsetminus Z}\to \tau_{\ge1} \left(j_*\IC^H_{X\smallsetminus Z}\right)\xrightarrow{+1}
$$
in $D^bMHM(X)$. Taking its dual and a Tate twist, we have
$$
\tau_{<0}\left(j_!\IC^H_{X\smallsetminus Z}\right)\to j_!\IC^H_{X\smallsetminus Z}\to \N_{X,Z}\xrightarrow{+1}
$$
using the identity $\dual j_*=j_!\dual$ (\cite{Saito90}*{4.3.5}) and the polarization $\IC^H_{X\smallsetminus Z}(n)\isom \dual\IC^H_{X\smallsetminus Z}$.

For a general hyperplane $L$, the embedding $\iota: L\hookrightarrow Y$ is non-characteristic for every nonzero cohomologies of $j_!\IC^H_{X\smallsetminus Z}$. From the identity $\iota^*j_!=j_!\iota^*$ (\cite{Saito90}*{4.4.3}), we have
$$
\iota^*\tau_{<0}\left(j_!\IC^H_{X\smallsetminus Z}\right)\to j_!\iota^*\IC^H_{X\smallsetminus Z}\to \iota^*\N_{X,Z}\xrightarrow{+1}
$$
Notice that from the general choice of $L$, we have $\iota^*\IC^H_{X\smallsetminus Z}\isom \IC_{X_L\smallsetminus Z_L}^H[1]$ and the cohomologies of the first term $\iota^*\tau_{<0}\left(j_!\IC^H_{X\smallsetminus Z}\right)$ are supported on degrees at most $-2$. Indeed, the pullback $\iota^*$ shifts the perverse structure by $[1]$. Therefore we have
$$
\H^0\left(j_!\IC_{X_L\smallsetminus Z_L}^H\right)\isom \H^{-1}(\iota^*\N_{X,Z})
$$
in $MHM(X)$. The left hand side is $\N_{X_L,Z_L}$ and the right hand side is the non-characteristic pullback of $\N_{X,Z}$. Then \cite{KS21}*{Proposition 4.17} completes the proof.
\end{proof}

From the fact that the dualizing functor preserves the perverse structure, we have $\H^0(j_!\IC^H_{X\smallsetminus Z})\isom \N_{X,Z}$ (see the proof of Lemma \ref{lem: slicing IC for log forms}). Using this, we have the following lemma, modified from \cite{KS21}*{Lemma 6.12}, crucial for the induction step in the proof of Theorem \ref{thm: p-forms extend} and Theorem \ref{thm: p-log form extends}.

\begin{lem}
\label{lem: initial step induction}
In the setting of Lemma \ref{lem: pushforward log forms via IC}, suppose $n\ge 2$. Let $\M\in MHM(X)$ be either $\IC^H_X$ or $\N_{X,Z}:=\dual \M_{X,Z}(-n)$. If $\H^i\left(\gr^F_0\DR(\M)\right)=0$ for $i=0,-1$, then $\H^0\left(\gr^F_{-1}\DR(\M)\right)=0$.
\end{lem}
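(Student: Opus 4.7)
My plan is to cut $X$ with a general hyperplane and bootstrap vanishing across the Hodge index using the slicing formulas. After choosing an embedding $X\subset Y=\P^N$, for a general (hence non-characteristic) hyperplane $L\subset Y$ put $X_L=L\cap X$, $Z_L=L\cap Z$, and let $\M_L$ denote $\IC^H_{X_L}$ when $\M=\IC^H_X$, resp.\ $\N_{X_L,Z_L}$ when $\M=\N_{X,Z}$; the slicing formulas of Lemma \ref{lem: slicing IC for forms} and Lemma \ref{lem: slicing IC for log forms} apply uniformly in the two cases.

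The first step is to check that $\gr^F_1\DR(\M_L)=0$ in both situations: for $\IC^H_{X_L}$ this is Lemma \ref{lem: basic IC}, and for $\N_{X_L,Z_L}$ it follows from Lemma \ref{lem: duality} by dualizing the fact (used in the proof of Lemma \ref{lem: pushforward log forms via IC}) that the first nonzero Hodge term of $\M_{X_L,Z_L}$ sits in index $-(n-1)$. With this vanishing, the slicing sequence at $p=0$ degenerates to a quasi-isomorphism $\O_L\tensor\gr^F_0\DR(\M)\isom\gr^F_0\DR(\M_L)[1]$. I would then invoke the Koszul triangle $\gr^F_0\DR(\M)(-L)\to\gr^F_0\DR(\M)\to\O_L\tensor\gr^F_0\DR(\M)\xrightarrow{+1}$ coming from $0\to\O_Y(-L)\to\O_Y\to\O_L\to 0$, whose derived and underived forms coincide as $L$ is non-characteristic: together with the hypothesis $\H^{-1}(\gr^F_0\DR(\M))=\H^0(\gr^F_0\DR(\M))=0$ and the automatic vanishing of $\H^{>0}$ for any graded de Rham complex, the long exact sequence yields $\H^{-1}(\O_L\tensor\gr^F_0\DR(\M))=0$, which via the previous quasi-isomorphism translates to $\H^0(\gr^F_0\DR(\M_L))=0$.

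Feeding this into the slicing sequence at $p=-1$, the left term $N^*_{L/Y}\tensor\gr^F_0\DR(\M_L)$ is a line-bundle twist of the previous vanishing and so has zero $\H^0$, while $\H^1(\gr^F_{-1}\DR(\M_L))=0$ for degree reasons, so the long exact sequence produces $\H^0(\O_L\tensor\gr^F_{-1}\DR(\M))=0$. Setting $\F:=\H^0(\gr^F_{-1}\DR(\M))$, the hypercohomology spectral sequence for the (derived) restriction to $L$, combined with $\H^1(\gr^F_{-1}\DR(\M))=0$, gives $\O_L\tensor\F=0$; Nakayama's lemma then forces $\F_x=0$ for every $x\in L$. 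Since the set of non-characteristic hyperplanes containing any fixed point $x\in X$ is a nonempty Zariski open subset of the codimension-one family $\{L:x\in L\}$, we may run the argument through any candidate point of $\Supp(\F)$ to conclude $\F=0$.

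The step I anticipate to be the main obstacle is this last descent from $L$ back to $X$: one must verify that the non-characteristic requirement imposed by the slicing formulas can be satisfied while also prescribing $L$ to pass through any given point of $X$, which amounts to a mild transversality condition on the characteristic variety of $\M$, and that the comparison between the derived and underived restrictions is valid uniformly in such $L$.
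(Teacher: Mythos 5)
Your approach is genuinely different from the paper's, and it has a real gap at the final descent step.

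The paper does not slice by hyperplanes at all. Instead it observes that $\H^0(\DR(\M))=0$ as a cohomology of the underlying $\C$-complex: $\DR(\M)$ is perverse, so a nonzero $\H^0$ would be a skyscraper quotient, but the underlying $\C$-structures $\IC_X$ and $j_!\IC_{X\smallsetminus Z}$ admit no nonzero morphism to a sheaf supported at a point ($\IC_X$ is simple with strict support $X$ of positive dimension, and $j_!\IC_{X\smallsetminus Z}$ has no quotient supported on $Z$ by adjunction). Combining this with Lemma \ref{lem: top HF} gives $\H^0(F_0\DR_\W(\M))=0$, and then two applications of the short exact sequence $0\to F_{p-1}\DR_\W(\M)\to F_p\DR_\W(\M)\to \gr^F_p\DR(\M)\to 0$ at $p=0$ and $p=-1$, together with the hypothesis and the fact that $\DR_\W(\M)$ lives in non-positive degrees, yield $\H^0(\gr^F_{-1}\DR(\M))=0$. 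This is a local filtered-complex argument and never needs to choose a hyperplane.

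The gap in your proposal is exactly the step you flagged: the claim that for \emph{every} $x\in X$ there is a non-characteristic hyperplane through $x$ is false in general. If the characteristic variety $Ch(\M)$ contains $T^*_xY$ as a component (equivalently, the characteristic cycle has a nonzero multiplicity along the conormal of the point $x$), then every hyperplane $L$ with $x\in L$ is characteristic, since $T^*_LY\cap T^*_xY$ is a nonzero line. This is not a pathological situation: for $\M=\IC^H_X$ with $X$ a surface with an isolated (even rational) singular point $p$, the conormal $T^*_pY$ typically appears in $Ch(\IC^H_X)$, and $p$ is exactly the point where you would need to run the descent because the general-$L$ step only shows $\Supp\,\H^0(\gr^F_{-1}\DR(\M))$ is $0$-dimensional. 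There is a second, milder issue conflated in the same step: non-characteristic-ness for the Hodge-module slicing and $\O_L$-regularity (Tor-vanishing) for the Koszul triangle are different genericity conditions; both hold for general $L$, but neither is guaranteed for an $L$ forced through a prescribed point, and for such $L$ the scheme $X_L$ may also fail to be normal so that $\IC^H_{X_L}$ is not what the slicing formula requires. So the slicing route gets you to finite support but not to vanishing, and the paper's filtered de Rham argument is what closes the gap.
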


\begin{proof}
Notice that $\H^0\left(\DR(\M)\right)=0$, considered as a cohomology of an object in $D_c^b(X,\C)$. Indeed, $\DR(\M)$ is a perverse sheaf admitting a nonzero morphism $\DR(\M)\to \H^0\left(\DR(\M)\right)$ of perverse sheaves. However, both $\DR(\IC^H_X)$ and $\DR(\N_{X,Z})$ do not admit a nonzero morphism to a vector space supported at a point, because their underlying $\C$-structures $\IC_X$ and $j_!\IC_{X\smallsetminus Z}$ do not.

Furthermore, $\gr^F_p\DR(\M)=0$ for all $p>0$. This follows from Lemma \ref{lem: basic IC} for $\IC^H_X$, Lemma \ref{lem: pushforward log forms via IC} and the duality Lemma \ref{lem: duality} for $\N_{X,Z}$. We take a local embedding $X\subset \W$ into a smooth variety and apply Lemma \ref{lem: top HF}. Then the rest follows from the proof of \cite{KS21}*{Lemma 6.8}. To elaborate, we have $\H^0\left(F_0\DR_\W(\M)\right)=0$ and from the short exact sequence of complexes
$$
0\to F_{-1}\DR_\W(\M)\to F_0\DR_\W(\M)\to \gr^F_0\DR(\M)\to 0,
$$
we have $\H^0(F_{-1}\DR_\W(\M))=0$. Therefore, again from the short exact sequence of complexes
$$
0\to F_{-2}\DR_\W(\M)\to F_{-1}\DR_\W(\M)\to \gr^F_{-1}\DR(\M)\to 0,
$$
we have $\H^0(\gr^F_{-1}\DR(\M))=0$. This is because $\H^1\left(F_{-2}\DR_\W(\M)\right)=0$ from the fact that $\DR_\W(\M)$ is supported on non-positive degrees.
\end{proof}

In the remaining section, we prove Theorems \ref{thm: p-forms extend}, \ref{thm: p-log form extends}, and \ref{thm: extending forms}. We modify the proofs of \cite{KS21}, using the properties of Du Bois complexes and intersection complexes developed earlier in the paper.

\subsubsection{Proof of Theorem \ref{thm: p-forms extend}}

Since the statements are local, we assume, without loss of generality, that $X$ is embeddable into a projective space $Y$. By Proposition \ref{prop: criterion for extending forms}, it suffices to prove
\begin{equation}
\label{eqn: S2 for p-form}
\dim \left(\Supp\left(\H^{-p+k}\gr^F_{p-n}\DR\left(\IC^H_X\right)\right)\right)\le n-k-2, \quad \forall k\ge 1
\end{equation}
for the ranges of $p$ given in the statement of the theorem. Notice that \eqref{eqn: S2 for p-form} is always true for $k>p$ since $\DR\left(\IC^H_X\right)$ is supported on non-positive degrees. We only consider $1\le k\le p$.

\textbf{Claim 1.} If $Z$ is empty, then \eqref{eqn: S2 for p-form} is true for all $p$.

This follows from \cite{KS21}*{Theorem 6.6}. We include the proof for completeness. We proceed by induction on $n=\dim X$. When $n=2$, we check for $p=1$ and $2$. For $p=2$, this is a consequence of Lemma \ref{lem: basic IC} since $X$ has rational singularities. Accordingly, $p=1$ follows from Lemma \ref{lem: initial step induction}.

Suppose \eqref{eqn: S2 for p-form} is true when the dimension of the variety is $n-1$. We prove for $\dim X=n$. Let $L$ be a general hyperplane of $Y$, then $X_L=L\cap X$ has rational singularities. By the induction hypothesis, the dimensions of the supports of both
$$
\H^{-p+k}\left(N^*_{L/Y}\otimes_{\O_L}\gr^F_{p-n+1}\DR\left(\IC_{X_L}^H\right)\right),\quad  \H^{-p+k}\left(\gr^F_{p-n}\DR\left(\IC_{X_L}^H\right)[1]\right)
$$
are at most $n-k-3$. By Lemma \ref{lem: slicing IC for forms}, the dimension of the support of $\O_L\tensor_{\O_Y}\H^{-p+k}\left(\gr^F_{p-n}\DR\left(\IC_{X}^H\right)\right)$ is at most $n-k-3$. We conclude that
$$
\dim \left(\Supp\left(\H^{-p+k}\gr^F_{p-n}\DR\left(\IC^H_X\right)\right)\right)\le \max\left\{n-k-2,0\right\}.
$$
Recall $k\le p$, so it suffices to consider when $(p,k)=(n-1,n-1),(n,n-1), (n,n)$. For $p=n$, \eqref{eqn: S2 for p-form} is a consequence of Lemma \ref{lem: basic IC} since $X$ has rational singularities. For $(p,k)=(n-1,n-1)$, this then follows from Lemma \ref{lem: initial step induction}.

\textbf{Claim 2.} If $p\le \codim_XZ-2$, then \eqref{eqn: S2 for p-form} is true.

We follow the proof of Claim 1. We proceed by induction on $n=\dim X$. When $n=2$ then either $Z$ is empty or $\codim_XZ=2$, in which case we already have \eqref{eqn: S2 for p-form}. For higher dimensions, we only need to prove when $Z$ is nonempty. Consider a general hyperplane $L\subset Y$, then $X_L\smallsetminus Z_L$ has rational singularities and either $\codim_{X_L}Z_L=\codim_XZ$ or $Z_L$ is empty. In both cases, it is easy to deduce \eqref{eqn: S2 for p-form} as in Claim 1.

\textbf{Claim 3.} If $R^{\codim_X Z-1}\mu_*\O_{\widetilde X}(-E)$ is generically vanishing on the components of $Z$ with the largest dimension, then \eqref{eqn: S2 for p-form} is true for $p\le \codim_XZ-1$.

We again proceed by induction on $n=\dim X$. Consider the distinguished triangle in the proof of Lemma \ref{lem: slicing IC for log forms}:
\begin{equation}
\label{eqn: truncation for j_!IC}
\tau_{<0}\left(j_!\IC^H_{X\smallsetminus Z}\right)\to j_!\IC^H_{X\smallsetminus Z}\to \N_{X,Z}\xrightarrow{+1}.
\end{equation}
From the 0-th cohomology of the natural morphism $j_!\IC^H_{X\smallsetminus Z}\to \IC_X^H$, we have a short exact sequence
\begin{equation}
\label{eqn: SES of N to IC}
0\to K\to \N_{X,Z}\to \IC_X^H\to 0 
\end{equation}
in $MHM(X)$, such that $K$ is the kernel of the surjection $\N_{X,Z}\to \IC_X^H$.

When $\dim X=2$ and $Z$ is a union of points, then by Lemma \ref{lem: graded de Rham of j_!IC} and the assumption, we have
$$
\H^i\left(\gr^F_0\DR\left(j_!\IC^H_{X\smallsetminus Z}\right)\right)=0
$$
for $i=0,-1$. Notice that $\H^0\left(\gr^F_0\DR\left(\tau_{<0}j_!\IC^H_{X\smallsetminus Z}\right)\right)=0$, since the cohomologies of $\tau_{<0}j_!\IC^H_{X\smallsetminus Z}$ are supported on negative degrees. From the long exact sequence of cohomologies after taking $\gr^F_0\DR$ on \eqref{eqn: truncation for j_!IC}, we have
$$
\H^i\left(\gr^F_0\DR(\N_{X,Z})\right)=0
$$ for $i=0,-1$. Accordingly, Lemma \ref{lem: initial step induction} deduces that $\H^0\left(\gr^F_{-1}\DR(\N_{X,Z})\right)=0$. Considering the long exact sequence of cohomologies of $\gr^F_{-1}\DR$ applied to  \eqref{eqn: SES of N to IC}, we conclude that $\H^0\left(\gr^F_{-1}\DR(\IC_X^H)\right)=0$, which proves the claim for $n=2$.

For $n>2$, assume $Z$ is nonempty. Otherwise, it follows from Claim 1. We again proceed by taking a general hyperplane $L\subset Y$. Since the higher direct images have a base change property for the restriction to a general hyperplane section, the assumption of the claim is true even after the restriction to $L$. Additionally, $X_L\smallsetminus Z_L$ has rational singularities, and either $\codim_{X_L}Z_L=\codim_XZ$ or $Z_L$ is empty. Therefore, from Lemma \ref{lem: slicing IC for forms} and the induction hypothesis, we conclude that
$$
\dim \left(\Supp\left(\H^{-p+k}\gr^F_{p-n}\DR\left(\IC^H_X\right)\right)\right)\le \max\left\{n-k-2,0\right\}.
$$
As in the proof of Claim 1, it suffices to consider when $(p,k)=(n-1,n-1),(n,n-1), (n,n)$. Since $p\le \codim_XZ-1$ and $Z$ is nonempty, $(p,k)=(n-1,n-1)$ is the only possible case. In this situation, $Z$ is a union of points. Then the proof for $n=2$ works verbatim.

This completes the proof of Theorem \ref{thm: p-forms extend}.

\subsubsection{Proof of Theorem \ref{thm: p-log form extends}}

This follows the same strategy as in the proof of Theorem \ref{thm: p-forms extend}. Under the assumptions of Theorem \ref{thm: p-log form extends}, we prove that
\begin{equation}
\label{eqn: S2 for log p-form}
\dim \left(\Supp\left(\H^{-p+k}\gr^F_{p-n}\DR\left(\N_{X,Z}\right)\right)\right)\le n-k-2, \quad \forall k\ge 1
\end{equation}
is true for $p\le m-2$. This is sufficient by Proposition \ref{prop: criterion for extending log forms}.

It is easy to see from Lemma \ref{lem: first HF} and the distinguished triangle \eqref{eqn: truncation for j_!IC} that $\gr^F_{p-n}\DR\left(\N_{X,Z}\right)=0$ for $p<0$. Additionally, \eqref{eqn: S2 for log p-form} is true for $k>p$ since $\DR\left(\N_{X,Z}\right)$ is supported on non-positive degrees. We only consider $1\le k\le p$.

We proceed by induction on $n=\dim X$. When $n=2$, the theorem is trivial unless $m\ge 3$. For $m\ge 3$, we have
\begin{equation}
\label{eqn: graded de Rham m=n+1}
\H^i\left(\gr^F_0\DR\left(j_!\IC^H_{X\smallsetminus Z}\right)\right)=0
\end{equation}
for $i=0,-1$ by Lemma \ref{lem: graded de Rham of j_!IC}. Then, we conclude as in Claim 3 of the proof of Theorem \ref{thm: p-forms extend}.

When $n>2$, as in the proof of Theorem \ref{thm: p-forms extend}, we use Lemma \ref{lem: slicing IC for log forms} and the induction hypothesis to conclude that
$$
\dim \left(\Supp\left(\H^{-p+k}\gr^F_{p-n}\DR\left(\N_{X,Z}\right)\right)\right)\le \max\left\{n-k-2,0\right\}.
$$
Thus, it suffices to consider when $(p,k)=(n-1,n-1),(n,n-1), (n,n)$. In all cases, we have $m\ge n+1$, which implies \eqref{eqn: graded de Rham m=n+1} for $i=0,-1$ by Lemma \ref{lem: graded de Rham of j_!IC}. Then, the proof for $n=2$ works in the same fashion.

\subsubsection{Proof of Theorem \ref{thm: extending forms}}

Consider the distinguished triangle
$$
\DB^0_{X,Z}\rightarrow\DB^0_X\xrightarrow{\rho}\DB^0_Z\xrightarrow{+1}
$$
and a quasi-isomorphism $\DB^0_{X,Z}\isom R\mu_*\O_{\widetilde X}(-E)$ induced by Theorem \ref{thm: Du Bois complex, rational singularities}. From the long exact sequence of cohomologies, we have
$$
\H^{i-1}\left(\DB^0_Z\right)\to R^i\mu_*\O_{\widetilde X}(-E)\to \H^{i}\left(\DB^0_X\right).
$$

It is a standard fact about Du Bois complexes that the dimension of the support of $\H^{i}\left(\DB^0_Z\right)$ is at most $\dim Z-i-1$ for $i>0$. Additionally, since $X$ has Du Bois singularities away from a subvariety of codimension $m$,
the support of $\H^i\left(\DB^0_X\right)$ for $i>0$ has codimension at least $m$. Therefore, when $i\ge1$, we have
$$
\codim_X\left(\Supp\left(R^i\mu_*\O_{\widetilde X}(-E)\right)\right)\ge\min\left\{m, \codim_X Z+i\right\}.
$$
Indeed if $i>1$, then this follows from the above. Suppose $i=1$. The morphism $\H^0\left(\DB^0_X\right)\to \H^0\left(\DB^0_Z\right)$ is equivalent to the natural morphism $\O_{X^{wn}}\to \O_{Z^{wn}}$ of the structure sheaves of weak normalizations. From the fact that $Z$ is generically smooth, hence generically weakly normal, this map is generically surjective, so that the support of the cokernel has the codimension at least $\codim_XZ+1$.

Therefore, if $m\ge \codim_XZ+1$, then $R^{\codim_X Z-1}\mu_*\O_{\widetilde X}(-E)$ is generically vanishing on the largest components of $Z$. Additionally,
$$
\max\left\{k\;|\;\codim_X\left(\Supp \left(R^{i-2}\mu_*\O_{\widetilde X}(-E)\right)\right)\ge i \mathrm{\;\;for\;all\;\;} 3\le i\le k\right\}\ge m.
$$
This completes the proof by Theorem \ref{thm: p-forms extend} and Theorem \ref{thm: p-log form extends}.

\section{Log canonical and Du Bois singularities}
\label{sec: log canonical and Du Bois}

In the rest of the paper, we show how to prove the theorem of Koll\'ar-Kov\'acs \cite{KK10}, that log canonical singularities are Du Bois, using the methods from Sections \ref{sec: Saito MHM} to \ref{sec: characterization of Du Bois complexes}. This new proof is rather simple and conceptual, beyond some necessary technicalities.

We start with a sketch of the general strategy to prove that $X$ is Du Bois when $X$ has log canonical singularities and $\w_X\isom \O_X$, in order to emphasize the main ideas in a technically simpler setting:
\begin{itemize}
\item Let $Z_0$ be a union of log canonical centers of $X$, and $Z_1$ be the union of log canonical centers properly contained in irreducible components of $Z_0$. One can reduce to proving that $(Z_0,Z_1)$ is a log rational pair.


\item Choose a log resolution $\mu:\widetilde X\to X$ such that $\mu^{-1}(Z_0)$ and $\mu^{-1}(Z_1)$ are reduced simple normal crossing (snc) divisors. Write $\mu^*K_X=K_{\widetilde X}+D_0+D_1+R-A$, so that the generic points of $D_0,D_1,R$ map to $Z_0\smallsetminus Z_1,Z_1, X\smallsetminus Z_0$ respectively, and  $D_0,D_1,R,A$ are effective snc divisors with $D_0,D_1,R$ reduced (because of log canonicity), not sharing irreducible components. Note that $D_0$ (resp. $D_1$) maps surjectively to $Z_0$ (resp. $Z_1$). Let $Y$ be the union of minimal strata of the snc divisor $D_0\cup R$ contained in $D_0$, so that $\mu|_Y:(Y,D_1|_Y)\to (Z_0,Z_1)$ is a proper surjective morphism of pairs.

We show a natural isomorphism $\mu_*\O_{D_0}(-D_1+A)\isom\I_{Z_0,Z_1}$, where $\I_{Z_0,Z_1}$ is the ideal sheaf of $Z_1$ in $Z_0$.
\item We realize the surjection $\O_{D_0}(-D_1+A)\to \O_Y(-D_1+A)$ as a morphism between the first nonzero terms in the Hodge filtrations of naturally defined mixed Hodge modules, the second of which is pure.
\item We obtain a left quasi-inverse map of the natural morphism $\I_{Z_0,Z_1}\to R\mu_*\O_Y(-D_1+A)$ using the Decomposition Theorem and the semisimplicity of a pure polarizable Hodge module. This implies that $(Z_0,Z_1)$ is a log rational pair.
\end{itemize}

In what follows, we describe additional technical backgrounds in Sections \ref{sec: associated primes of direct images} and \ref{sec: cyclic coverings}, which are used in the proof given in Section \ref{sec: lc implies Du Bois}.

\subsection{Associated primes of higher direct images of first nonzero terms in Hodge filtrations}
\label{sec: associated primes of direct images}

This subsection studies associated primes of higher direct images of the first nonzero terms in the Hodge filtrations of mixed Hodge modules. As previously mentioned in the paragraph above Lemma \ref{lem: first HF}, the first nonzero term in the Hodge filtration of a mixed Hodge module on $X$ is a well-defined coherent sheaf on $X$, regardless of the choice of an embedding into a smooth variety.

Let $X$ be a smooth variety and $\M\in MHM(X)$. Recall that the category of polarizable Hodge modules of weight $w$ admits the decomposition into the category of polarizable Hodge modules of weight $w$ with strict supports:
$$
MH(X,w)=\bigoplus_Z MH_Z(X,w),
$$
where $Z$ ranges over the set of irreducible subvarieties of $X$. In particular, each graded Hodge module $\gr^W_w\M$ admits the decomposition into polarizable Hodge modules of weight $w$ with strict supports. As an analogue of log canonical centers for log canonical pairs, we define centers for a mixed Hodge module $\M$.

\begin{defn}
A closed subvariety $Z\subset X$ is a \textit{center} of $\M\in MHM(X)$ if $Z$ is the strict support of a polarizable Hodge module appearing in the decomposition of $\gr^W_w\M$ for some $w$.
\end{defn}

This notion allows us to extend Koll\'ar's torsion freeness for higher direct images of canonical divisors \cite{Kollar86}*{Theorem 2.1} and Saito's extension to the first nonzero terms in the Hodge filtrations of polarizable Hodge modules \cite{Saito91}.

\begin{prop}
\label{prop: torsion freeness of direct image of MHM}
Let $f:X\to Y$ be a proper morphism from a smooth variety $X$ to a variety $Y$. Let $\M\in MHM(X)$ and $F_{p_0}\M$ be the first nonzero term in the Hodge filtration. Then for all $i\ge0$, every associated prime of $R^if_*(F_{p_0}\M)$ is the $f$-image of the generic point of a center of $\M$.
\end{prop}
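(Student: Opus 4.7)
The plan is to proceed by induction on the length $\ell(\M)$ of the weight filtration of $\M$, defined as the number of weights $w$ for which $\gr^W_w\M\neq 0$. Saito's torsion-freeness theorem for pure polarizable Hodge modules from \cite{Saito91} serves as the base case, and the key engine in the inductive step is the strictness of the Hodge filtration with respect to the weight filtration.

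For the base case $\ell(\M)=1$, $\M$ is pure and admits a strict support decomposition $\M=\bigoplus_j\IC^H_{Z_j}(\mathbb V_j)$. Writing $p_{0,j}$ for the first nonzero Hodge index of each summand, we have $p_0=\min_j p_{0,j}$, and so $F_{p_0}\M=\bigoplus_{j:\,p_{0,j}=p_0}F_{p_{0,j}}\IC^H_{Z_j}(\mathbb V_j)$, where each summand is precisely the first nonzero Hodge piece of the corresponding pure Hodge module. Saito's theorem then gives that the associated primes of $R^if_*F_{p_{0,j}}\IC^H_{Z_j}(\mathbb V_j)$ are $f$-images of the generic point of $Z_j$, and each $Z_j$ is a center of $\M$.

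For the inductive step, let $b$ be the maximal weight with $\gr^W_b\M\neq 0$. The short exact sequence $0\to W_{b-1}\M\to\M\to \gr^W_b\M\to 0$ is strict with respect to $F$, yielding $0\to F_{p_0}W_{b-1}\M\to F_{p_0}\M\to F_{p_0}\gr^W_b\M\to 0$. Strictness and the minimality of $p_0$ force the first nonzero Hodge indices of both $W_{b-1}\M$ and $\gr^W_b\M$ to be $\geq p_0$, with equality for at least one of the two; whenever the index equals $p_0$, the $F_{p_0}$ term is itself a first nonzero Hodge piece, and whenever it exceeds $p_0$, the $F_{p_0}$ term vanishes. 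Thus the inductive hypothesis applies to $R^if_*F_{p_0}W_{b-1}\M$ (which has $\ell(W_{b-1}\M)<\ell(\M)$), and the base case applies to $R^if_*F_{p_0}\gr^W_b\M$. Centers of $W_{b-1}\M$ and of $\gr^W_b\M$ are all centers of $\M$, so both terms satisfy the desired bound on associated primes.

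The main obstacle lies in propagating this bound across the long exact sequence for $Rf_*$. The natural filtration $0\to\mathrm{coker}(\delta_{i-1})\to R^if_*F_{p_0}\M\to \ker(\delta_i)\to 0$ controls $\ker(\delta_i)\subseteq R^if_*F_{p_0}\gr^W_b\M$ by the elementary fact $\mathrm{Ass}(\ker)\subseteq \mathrm{Ass}(\text{ambient})$, but $\mathrm{coker}(\delta_{i-1})$ is a \emph{quotient} of $R^if_*F_{p_0}W_{b-1}\M$, and associated primes may enlarge under quotients in general. To resolve this, the plan is to invoke Saito's strictness of the direct image functor on filtered $\D$-modules underlying mixed Hodge modules, which identifies $R^if_*F_{p_0}\M$ with $F_{p_0}\mathcal H^i(f_*\M)$ for the mixed Hodge module $\mathcal H^i(f_*\M)\in MHM(Y)$; the weight filtration on $\mathcal H^i(f_*\M)$ then endows $R^if_*F_{p_0}\M$ with an intrinsic filtration whose graded pieces are first Hodge pieces of pure polarizable Hodge modules on $Y$, forcing associated primes to be strict supports of those pieces. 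Comparing the weight spectral sequence of $f_*\M$ with the Decomposition Theorem applied summand-by-summand to $\gr^W_w\M$ shows that these strict supports are $f$-images of centers of $\M$ (rather than proper subvarieties thereof), closing the induction.
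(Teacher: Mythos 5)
Your inductive framework matches the paper's — induct on the number of nonzero weight graded pieces, base case via Saito's torsion-freeness \cite{Saito91} for pure polarizable Hodge modules — and you correctly pinpoint the real obstacle: in the long exact sequence one of the two pieces sandwiching $R^if_*F_{p_0}\M$ is a \emph{quotient} rather than a subsheaf, and associated primes are not controlled under quotients. But you get stuck precisely because you truncated at the wrong end. Splitting off the top weight piece $\gr^W_b\M$ places the problematic cokernel on the side of the genuinely mixed module $W_{b-1}\M$. The paper instead splits off the \emph{bottom} weight piece: with $W_w\M$ the smallest nonzero step of the weight filtration (which is pure of weight $w$), one has a short exact sequence $0\to K_i\to \H^i(f_*\M)\to Q_i\to 0$ in $MHM(Y)$ where $K_i$ is the image of $\H^i(f_*W_w\M)\to \H^i(f_*\M)$. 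Since $\H^i(f_*W_w\M)$ is a \emph{pure} polarizable Hodge module, hence semisimple by \cite{Saito88}*{Corollaire 5.2.13}, the surjection onto $K_i$ splits, so $K_i$ is a direct summand of $\H^i(f_*W_w\M)$ and $F_{p_0}K_i$ is a \emph{subsheaf} of $F_{p_0}\H^i(f_*W_w\M)=R^if_*F_{p_0}W_w\M$, while $F_{p_0}Q_i$ is a subsheaf of $F_{p_0}\H^i(f_*\M/W_w\M)=R^if_*F_{p_0}(\M/W_w\M)$ by strictness. The quotient problem disappears and the induction closes. With your choice of the top piece, the analogous $K_i$ is a quotient of a mixed module, for which no such splitting is available.

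Your proposed patch — refiltering $F_{p_0}\H^i(f_*\M)$ by the weight filtration of $\H^i(f_*\M)$ and controlling strict supports of the graded pieces via the weight spectral sequence and the Decomposition Theorem applied to each $\gr^W_w\M$ — is a different and, in principle, viable route, but as written it abandons the induction entirely (it is a direct argument, not a way to complete the inductive step), and it leaves essential ingredients unverified: you still need that the first nonzero Hodge piece of a pure Hodge module with a single strict support $Z$ has no associated primes other than the generic point of $Z$ (again the content of \cite{Saito91}), and you need to check that the relevant subquotients appearing in the weight spectral sequence inherit strict supports among $f$-images of centers. The paper's bottom-truncation argument sidesteps all of this and is the cleaner path.
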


\begin{ex}
When $\M=\Q^H_X[\dim X]$, then $p_0=-\dim X$ and $F_{p_0}\M=\omega_X$. Hence, we recover Koll\'ar's torsion freeness result; see \cites{Kollar86, Saito91}.
\end{ex}

\begin{ex}
When $\M=j_*\Q^H_U[\dim X]$ such that $j:U\to X$ is an open embedding of the complement of a reduced simple normal crossing divisor $E=X\smallsetminus U$, then $p_0=-\dim X$ and $F_{p_0}\M=\omega_X(E)$. It is easy to see from Lemma \ref{lem: decomposition snc} that centers of $\M$ are log canonical centers of the pair $(X,D)$. Consequently, Proposition \ref{prop: torsion freeness of direct image of MHM} says every associated prime of $R^if_*\omega_X(E)$ is the $f$-image of the generic point of a log canonical center of $(X,D)$; see \cite{Ambro03}*{Theorem 3.2} and \cite{Fujino11}*{Theorem 6.3}.
\end{ex}

In preparation for the proof, we first establish the commutativity result for higher direct images of the first nonzero term in the Hodge filtration:

\begin{lem}
\label{lem: commutativity of direct image and first HF}
In the setting of Proposition \ref{prop: torsion freeness of direct image of MHM}, assume $Y$ is smooth. Then $F_{p}\H^i(f_*\M)=0$ for all $p<p_0$ and there exists a natural isomorphism
$$
R^if_*F_{p_0}\M\isom F_{p_0}\H^i(f_*\M).
$$
This is functorial; for a morphism $\M_1\to \M_2$ of mixed Hodge modules $\M_1,\M_2\in MHM(X)$ with the induced morphism $F_{p_0}\M_1\to F_{p_0}\M_2$ of the first nonzero terms in the Hodge filtrations, there exists the following commutative diagram:
\begin{equation*}
\xymatrix{
{R^if_*F_{p_0}\M_1}\ar@{<->}[r]^-{\sim}\ar[d]& {F_{p_0}\H^i(f_*\M_1)}\ar[d]\\
{R^if_*F_{p_0}\M_2}\ar@{<->}[r]^-{\sim}&{F_{p_0}\H^i(f_*\M_2)}
}
\end{equation*}
\end{lem}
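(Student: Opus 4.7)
The plan is to combine Proposition \ref{prop: commutativity of graded de Rham} with Lemma \ref{lem: first HF}, exploiting the smoothness of both $X$ and $Y$. Since $X$ is smooth and $F_{p_0-1}\M=0$, the description \eqref{eqn: filtration on DR} of the Hodge filtration on the de Rham complex collapses: every term of $\gr^F_{p_0}\DR(\M)$ sitting in negative cohomological degree vanishes, so $\gr^F_{p_0}\DR(\M)\isom F_{p_0}\M$ as a sheaf placed in degree $0$. Proposition \ref{prop: commutativity of graded de Rham} then yields
$$
\gr^F_{p_0}\DR(f_*\M)\isom Rf_*F_{p_0}\M
$$
in $D^b_{coh}(Y,\O_Y)$, while Lemma \ref{lem: first HF} gives $\gr^F_p\DR(f_*\M)=0$ for every $p<p_0$.

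The first assertion, $F_p\H^i(f_*\M)=0$ for $p<p_0$, is equivalent to $\gr^F_p\H^i(f_*\M)=0$ for all such $p$. I will deduce it by induction on the number of nonzero cohomologies of $f_*\M\in D^bMHM(Y)$, using at each step the truncation triangle $\H^{q_1}(f_*\M)[-q_1]\to f_*\M\to\tau_{>q_1}(f_*\M)\xrightarrow{+1}$ at the lowest nontrivial cohomology $q_1$. Applying $\gr^F_p\DR$ and inspecting its degree-$0$ cohomology (which on smooth $Y$ is $\gr^F_p$ of the corresponding mixed Hodge module), I extract $\gr^F_p\H^{q_1}(f_*\M)=0$ for $p<p_0$, hence $F_{p_0-1}\H^{q_1}(f_*\M)=0$. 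This in turn forces $\gr^F_p\DR\bigl(\H^{q_1}(f_*\M)\bigr)=0$ for every $p<p_0$ (again by the collapsing above), and the triangle then gives the same vanishing for $\tau_{>q_1}(f_*\M)$, so the induction continues.

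Once every $\N_i:=\H^i(f_*\M)$ satisfies $F_{p_0-1}\N_i=0$, the same collapsing argument as for $\M$ gives $\gr^F_{p_0}\DR(\N_i)\isom F_{p_0}\N_i$ concentrated in degree $0$. The hypercohomology spectral sequence
$$
E_2^{a,b}=\H^a\gr^F_{p_0}\DR\bigl(\H^b(f_*\M)\bigr)\Rightarrow\H^{a+b}\gr^F_{p_0}\DR(f_*\M)
$$
therefore degenerates at $E_2$ with only the row $a=0$ nonzero, yielding $\H^i\gr^F_{p_0}\DR(f_*\M)\isom F_{p_0}\H^i(f_*\M)$. Combined with the first paragraph this produces the desired isomorphism $R^if_*F_{p_0}\M\isom F_{p_0}\H^i(f_*\M)$. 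Functoriality with respect to a morphism $\M_1\to\M_2$ is automatic, since Proposition \ref{prop: commutativity of graded de Rham}, the truncation triangles, and the spectral sequence are all natural in the mixed Hodge module.

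The step I expect to be most delicate is the inductive vanishing. Conceptually the reason behind it is Saito's strictness of the filtered direct image (\cite{Saito88}*{Section 2.3}), which delivers both assertions simultaneously; within the paper's graded de Rham formalism one must instead propagate the vanishing from the full complex down to each cohomology by truncation, and verify that the cohomological support range $[-\dim Y,0]$ of $\gr^F_p\DR$ applied to a single mixed Hodge module on smooth $Y$ is strong enough to isolate the degree-$0$ summand at the lowest truncation step of the induction.
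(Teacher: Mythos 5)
The paper's own proof is a one-line reference to Saito's construction of the filtered direct image (\cite{Saito88}*{2.3}), for which strictness immediately yields both assertions; your proposal instead re-derives the statement inside the graded de Rham formalism of Section~\ref{sec: graded de Rham}, which is a legitimate and somewhat more self-contained route. The collapsing observation ($\gr^F_{p_0}\DR(\M)\isom F_{p_0}\M$ in degree $0$ because the degree-$(-j)$ term is $\gr^F_{p_0-j}M\otimes\wedge^j T$), the use of Proposition~\ref{prop: commutativity of graded de Rham}, and the final spectral sequence argument are all correct.

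However, there is a genuine gap in the inductive vanishing step, and it is precisely where you flagged uncertainty: you truncate at the \emph{lowest} nonzero cohomology $q_1$, but the long exact sequence in that direction does not isolate $\H^0\bigl(\gr^F_p\DR(\H^{q_1}(f_*\M))\bigr)$. Explicitly, applying $\gr^F_p\DR$ to the triangle $\H^{q_1}(f_*\M)[-q_1]\to f_*\M\to\tau_{>q_1}(f_*\M)\xrightarrow{+1}$ and taking cohomology in degree $q_1$ gives a surjection
\[
\H^{q_1-1}\bigl(\gr^F_p\DR(\tau_{>q_1}f_*\M)\bigr)\twoheadrightarrow \H^0\bigl(\gr^F_p\DR(\H^{q_1}(f_*\M))\bigr),
\]
since $\H^{q_1}(\gr^F_p\DR(f_*\M))=0$ for $p<p_0$. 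But the left-hand term need not vanish: for $\N_j:=\H^j(f_*\M)$ with $j>q_1$ the complex $\gr^F_p\DR(\N_j)[-j]$ has cohomology in degrees $[j-\dim Y, j]$, so degree $q_1-1$ contributions from $j=q_1+1,\dots,q_1+\dim Y-1$ are not ruled out once $\dim Y\ge 2$. Hence you cannot conclude $\gr^F_p\H^{q_1}(f_*\M)=0$ this way.

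The fix is to truncate from the \emph{top}, exactly as in the paper's Lemma~\ref{lem: graded de Rham of first cohomology}. With $q$ the largest nonzero cohomological degree and the triangle $\tau_{<q}(f_*\M)\to f_*\M\to\H^q(f_*\M)[-q]\xrightarrow{+1}$, the degree-$q$ portion of the long exact sequence reads
\[
\H^{q}\bigl(\gr^F_p\DR(f_*\M)\bigr)\to\H^0\bigl(\gr^F_p\DR(\H^q(f_*\M))\bigr)\to\H^{q+1}\bigl(\gr^F_p\DR(\tau_{<q}f_*\M)\bigr),
\]
and now the right-hand term genuinely vanishes, because $\gr^F_p\DR(\tau_{<q}f_*\M)$ has cohomologies only in degrees $\le q-1$ (each $\gr^F_p\DR$ of a single mixed Hodge module sits in degrees $\le 0$). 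Thus $\H^0(\gr^F_p\DR(\H^q(f_*\M)))=0$ for $p<p_0$, which for the minimal nonzero filtration index $k_0$ of $\H^q(f_*\M)$ gives $F_{k_0}\H^q(f_*\M)=\H^0(\gr^F_{k_0}\DR(\H^q(f_*\M)))=0$ whenever $k_0<p_0$, a contradiction; then $\gr^F_p\DR(\H^q(f_*\M))=0$ for $p<p_0$, the triangle propagates the vanishing to $\tau_{<q}$, and the induction descends. One further small imprecision: the parenthetical ``degree-$0$ cohomology is $\gr^F_p$ of the corresponding mixed Hodge module'' is only true at the minimal filtration index (where the degree-$(-1)$ term of the graded de Rham complex vanishes); for larger $p$ it is merely a quotient of $\gr^F_p$. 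That is enough for the contradiction argument, but worth stating carefully. With the truncation direction reversed, your derivation goes through and amounts to re-proving Saito's strictness from the commutativity of $\gr^F_p\DR$ with proper pushforward.
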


\begin{proof}
This is immediate from the definition of the direct image functor of filtered $\D$-modules. See, for example, \cite{Saito88}*{2.3}.
\end{proof}

\begin{proof}[Proof of Proposition \ref{prop: torsion freeness of direct image of MHM}]
Since the statements are local over $Y$, we may assume that $Y$ is smooth because $Y$ is locally embeddable. We proceed by induction on the number of weights $w$ of $\M$ with nonzero graded pieces $\gr^W_w \M$. If $\M$ is a polarizable Hodge module (i.e. pure weight), then the statement follows from Saito \cite{Saito91}*{Proposition 2.6}. If $\M$ has two or more weights with nonzero graded pieces, let $W_w\M$ be the first nonzero weight filtration of $\M$ and consider the short exact sequence
$$
0\to W_w\M\to \M\to \M/W_w\M\to0
$$
in $MHM(X)$. Then, we have a long exact sequence of cohomologies of direct images
$$
\to\H^i(f_*W_w\M)\xrightarrow{\rho_i} \H^i(f_*\M)\xrightarrow{\psi_i} \H^i(f_*\M/W_w\M)\to
$$
in $MHM(Y)$. Let $K_i=\mathrm{image}(\rho_i)$, $Q_i=\mathrm{image}(\psi_i)$. The strictness of the Hodge filtrations induces the short exact sequence
$$
0\to F_{p_0}K_i\to F_{p_0}\H^i(f_*\M)\to F_{p_0}Q_i\to 0.
$$
Notice that $\H^i(f_*W_w\M)$ is a polarizable Hodge module of weight $w+i$, and hence, semisimple (\cite{Saito88}*{Corollaire 5.2.13}). Consequently, the surjection
$$
\H^i(f_*W_w\M)\to K_i
$$
splits. Therefore, $F_{p_0}K_i$ is a subsheaf of $F_{p_0}\H^i(f_*W_w\M)$ and $F_{p_0}Q_i$ is a subsheaf of $F_{p_0}\H^i(f_*\M/W_w\M)$. Thus, every associated prime of $F_{p_0}\H^i(f_*\M)$ is an associated prime of either $F_{p_0}\H^i(f_*W_w\M)$ or $F_{p_0}\H^i(f_*\M/W_w\M)$. By Lemma \ref{lem: commutativity of direct image and first HF} and the induction hypothesis, we conclude the proof.
\end{proof}

\subsection{Mixed Hodge modules of cyclic coverings}
\label{sec: cyclic coverings}

This subsection studies various mixed Hodge modules appearing in cyclic covering constructions, branched along simple normal crossing divisors. Let $X$ be a smooth variety and $D+R+\Delta$ is a $\Q$-effective snc divisor such that $D+R$ is reduced, $\Delta$ has coefficients less than $1$, and $D,R,\Delta$ do not share an irreducible component. Denote $I_D,I_R,I_\Delta$ as the index sets of irreducible components of $D,R,\Delta$, respectively. Equivalently, $D=\cup_{i\in I_D}D_i$, $R=\cup_{i\in I_R}R_i$, $\lceil\Delta\rceil=\cup_{i\in I_\Delta}\Delta_i$. Let $L$ be a line bundle on $X$ such that $L\sim_\Q \Delta$, or equivalently, there exists a positive integer $m$ such that $L^{\tensor m}\isom \O_X(m\Delta)$.

Let $\pi:X'\to X$ be the cyclic cover (see e.g. \cite{Kollar13}*{2.44}) associated to the section $s:\O_X\to L^{\tensor m}$ inducing the divisor $m\Delta$. Let $D':=\pi^*D$, $R':=\pi^*R$. We describe the local picture of $X'$ as the normalization of a hypersurface of the total space of $L$.

Let $x_1,\dots, x_n$ be the local coordinate of $X$ and $\lceil\Delta\rceil=V(x_1\cdots x_a)$, $D=V(x_{a+1}\cdots x_b)$, $R=V(x_{b+1}\cdots x_c)$ where $1\le a\le b\le c\le n$. If $t$ is a local generator of $L$, then $X'$ is locally the normalization of a hypersurface
\begin{equation}
\label{eqn: cyclic cover hypersurface}
t^m=x_1^{e_1}\cdots x_a^{e_a}    
\end{equation}
in the coordinate system $x_1,\dots,x_n,t$. With this coordinate system, we have
$$
D'=V(x_{a+1}\cdots x_b),\quad R'=V(x_{b+1}\cdots x_c),
$$
that $D_i':=\pi^*D_i$ and $R_i':=\pi^*R_i$ are locally defined by the vanishing of a single coordinate. For a subset of indices $I\subset I_D\sqcup I_R$, we define the stratum
$$
(D\cup R)_I:=D_{I\cap I_D}\bigcap R_{I\cap I_R},
$$
where $D_{I\cap I_D}:=\cap_{i\in I\cap I_D}D_i$ and $R_{I\cap I_R}:=\cap_{i\in I\cap I_R}R_i$. Alternatively, this is a smooth stratum of the snc divisor $D\cup R$. Likewise, we define $(D'\cap R')_I$. It is easy to see that $(D'\cap R')_I=\pi^*(D\cap R)_I$, so that this is locally the normalization of the hypersurface \eqref{eqn: cyclic cover hypersurface} cut out by the corresponding coordinates in $x_{a+1},\dots, x_c$.

Notice that $X'$ and $(D'\cap R')_I$ have finite quotient singularities by \cite{EV92}*{Lemma 3.24}, hence are rationally smooth. This implies that
\begin{equation}
\label{eqn: rationally smooth}
\Q_{X'}^H[n]\isom \IC_{X'}^H\isom \left(\dual\Q_{X'}^H\right)(-n)[-n]    
\end{equation}
in $MH(X',n)$ and likewise for $(D'\cap R')_I$ (see also \cite{HTT}*{Proposition 8.2.21}). Indeed, the natural morphism $\Q^H_{X'}[n]\to \IC^H_{X'}$ in Lemma \ref{lem: basic IC} is an isomorphism, and $\IC_{X'}^H\isom \left(\dual\Q_{X'}^H\right)(-n)[-n]$ by the polarization.

For brevity, given a variety $Z$, we define $\mathbb D_Z^H:=\dual \Q_Z^H[-\dim Z]\in D^bMHM(Z)$ as the object associated to the Verdier's dualizing complex of $Z$ shifted by $[-\dim Z]$. Using this notation and \eqref{eqn: rationally smooth}, $\mathbb D_{X'}^H$ (resp. $\mathbb D_{(D'\cap R')_I}^H$) is a polarizable Hodge module of weight $-n$ (resp. $-n+|I|$) with strict support $X'$ (resp. $(D'\cap R')_I$). Furthermore, from the identity $\pi^!\dual=\dual\pi^*$ (\cite{Saito90}*{4.4}), it is easy to see that
\begin{equation}
\label{eqn: cyclic cover pullback}
\mathbb D_{X'}^H=\pi^!\mathbb D_{X}^H, \quad \mathbb D_{(D'\cap R')_I}^H=\pi^!\mathbb D_{(D\cap R)_I}^H.    
\end{equation}

\begin{lem}
\label{lem: decomposition snc}
With the notation above, $\mathbb D^H_D\in MHM(X)$ and we have the decomposition
$$
\gr^W_w\mathbb D^H_D=\bigoplus_{I\subset I_D, |I|=n+w} \mathbb D_{D_I}^H. 
$$
Furthermore, the first nonzero term in the Hodge filtration is $F_0 \mathbb D^H_D\isom \omega_D$.
\end{lem}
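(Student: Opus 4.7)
I would prove this by induction on $|I_D|$, the number of irreducible components of $D$.

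\textbf{Base case} ($|I_D|=1$): $D$ is smooth of dimension $n-1$, so $\Q^H_D[n-1]\in MHM(X)$ is pure of weight $n-1$. Polarization gives $\dual(\Q^H_D[n-1])\isom \Q^H_D(n-1)[n-1]$, hence $\mathbb D^H_D=\dual\Q^H_D[-(n-1)]\isom \Q^H_D(n-1)[n-1]$ is a pure Hodge module of weight $1-n$. This matches the unique nonempty $I=\{1\}\subset I_D$ with $|I|=n+w=1$. The Tate twist identifies $F_0\mathbb D^H_D=F_{-(n-1)}\Q^H_D=\omega_D$ and $F_{-1}\mathbb D^H_D=0$.

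\textbf{Inductive step} ($|I_D|>1$): Fix a component $D_1$, set $D'=\bigcup_{i\neq 1}D_i$ and $D_{12}:=D_1\cap D'$, a reduced SNC divisor in the smooth variety $D_1$ with strictly fewer components. The pushout square $D_{12}\leftarrow D_1\sqcup D'\to D$ produces the Mayer--Vietoris short exact sequence $0\to\Q_D\to\Q_{D_1}\oplus\Q_{D'}\to\Q_{D_{12}}\to 0$, which lifts via Saito's formalism to a distinguished triangle $\Q^H_D\to\Q^H_{D_1}\oplus\Q^H_{D'}\to\Q^H_{D_{12}}\xrightarrow{+1}$ in $D^bMHM(X)$. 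Applying $\dual$ and shifting by $[-(n-1)]$, and using $\dual\Q^H_Z[-\dim Z]=\mathbb D^H_Z$ together with the dimensions $\dim D=\dim D_1=\dim D'=n-1$, $\dim D_{12}=n-2$, yields
$$
\mathbb D^H_{D_{12}}[-1]\to\mathbb D^H_{D_1}\oplus\mathbb D^H_{D'}\to\mathbb D^H_D\xrightarrow{+1}.
$$
By the base case and induction the first two terms lie in $MHM(X)$, so the cohomology long exact sequence forces $\mathbb D^H_D\in MHM(X)$ and produces a short exact sequence
$$
0\to\mathbb D^H_{D_1}\oplus\mathbb D^H_{D'}\to\mathbb D^H_D\to\mathbb D^H_{D_{12}}\to 0.
$$

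\textbf{Filtrations:} Strictness of MHM morphisms with respect to both $W$ and $F$ gives termwise SESes on $\gr^W_w$ and $F_p$. On weight graded pieces, semisimplicity of pure Hodge modules splits the sequence; combining the inductive decompositions for $D'$ (ambient dimension $n$) and $D_{12}\subset D_1$ (ambient dimension $n-1$), under the bijection $K\leftrightarrow K\cup\{1\}$ that identifies $(D_{12})_K=D_{K\cup\{1\}}$, partitions the summands by whether $1\in I$ and $|I|=1$ or $|I|\geq 2$ and yields $\gr^W_w\mathbb D^H_D=\bigoplus_{\emptyset\neq I\subset I_D,\,|I|=n+w}\mathbb D^H_{D_I}$. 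For the Hodge filtration, induction gives $F_{-1}\mathbb D^H_D=0$, and at $p=0$ we obtain
$$
0\to\omega_{D_1}\oplus\omega_{D'}\to F_0\mathbb D^H_D\to\omega_{D_{12}}\to 0,
$$
which one identifies with the SES $0\to\omega_{D_1}\oplus\omega_{D'}\to\omega_D\to\omega_{D_{12}}\to 0$ obtained from Grothendieck duality applied to $0\to\O_D\to\O_{D_1}\oplus\O_{D'}\to\O_{D_{12}}\to 0$, yielding $F_0\mathbb D^H_D\isom\omega_D$.

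\textbf{Main obstacle:} The subtlest step is the last one, i.e.\ canonically matching the two short exact sequences for $F_0\mathbb D^H_D$ and $\omega_D$ rather than merely exhibiting both as abstract extensions of $\omega_{D_{12}}$ by $\omega_{D_1}\oplus\omega_{D'}$. This relies on the compatibility between Saito's duality functor on $D^bMHM(X)$ and Grothendieck duality on the underlying coherent complexes (Lemma \ref{lem: duality}), applied functorially to the Mayer--Vietoris data; equivalently, one strengthens the induction to include the natural comparison $F_0\mathbb D^H_Z\to\omega_Z$ being an isomorphism and propagates it through the SES using the five lemma.
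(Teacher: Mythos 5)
Your weight-decomposition argument is essentially the same as the paper's: both proceed by induction on $|I_D|$, produce a short exact sequence of mixed Hodge modules exhibiting $\mathbb D^H_D$ as an extension of $\mathbb D^H_{D_{12}}$ by $\mathbb D^H_{D_1}\oplus \mathbb D^H_{D'}$, and then use strictness of the weight filtration and semisimplicity of pure Hodge modules. The only packaging difference is that you invoke the Mayer--Vietoris triangle for the closed cover $D=D_1\cup D'$ directly, while the paper obtains the same three graded summands by running two adjunction triangles in sequence (Proposition \ref{prop: Saito triangle}), first for the closed embedding $D_1\hookrightarrow D$ with its open complement, and then for $D'$. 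The octahedral axiom makes these equivalent, so this part is fine; just note that a citation to the adjunction triangles of Saito's formalism would make the existence of the Mayer--Vietoris triangle in $D^b MHM(X)$ fully rigorous.

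Where you genuinely diverge — and where the paper's argument is substantially cleaner — is the identification $F_0\mathbb D^H_D\isom\omega_D$. You correctly flag as the \emph{main obstacle} the problem of matching the abstract extension $0\to\omega_{D_1}\oplus\omega_{D'}\to F_0\mathbb D^H_D\to\omega_{D_{12}}\to 0$ coming from $F$-strictness with the Grothendieck-duality sequence for $\omega_D$. The paper avoids this entirely with a one-line computation that you should adopt: since $D$ is an snc divisor, it has Du Bois singularities, so $\gr^F_0\DR(\Q^H_D)\isom\O_D$ (Theorem \ref{thm: Du Bois via MHM}, applied with $Z=\emptyset$). Applying Lemma \ref{lem: duality} to $\mathbb D^H_D=\dual(\Q^H_D)[-\dim D]$ then gives
$$
\gr^F_0\DR(\mathbb D^H_D)\isom R\Hom_{\O_D}\bigl(\O_D,\omega^\bullet_D\bigr)[-\dim D]\isom\omega_D,
$$
where the last isomorphism uses that $D$ is Cohen--Macaulay. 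Since $\mathbb D^H_D$ is a single mixed Hodge module, this reads $F_0\mathbb D^H_D\isom\omega_D$. This bypasses the naturality/matching issue altogether and also automatically shows $F_0$ is the first nonzero Hodge step, since $\gr^F_p\DR(\Q^H_D)=\DB^{-p}_D$ vanishes for $p>0$. So the gap in your write-up is not fatal, but the roundabout comparison of two short exact sequences (plus the appeal to compatibility of Saito duality with coherent duality applied ``functorially to the Mayer--Vietoris data'') should be replaced by this direct computation.
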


This is the statement that holds in general for any reduced snc divisor $D$ in $X$.

\begin{proof}
We proceed by induction on the number $|I_D|$ of indices of $D$. If $D$ is a smooth divisor, then the proof is immediate. If $D$ has two or more irreducible components, let $D_0$ be a component and write $D=D_0\cup D_1$ with $D_1$ not containing $D_0$.

Let $\alpha:D_0\to D$ be the closed embedding and $\beta:D\smallsetminus D_0\to D$ be the open embedding. Then we have the distinguished triangle (Proposition \ref{prop: Saito triangle} or \cite{Saito90}*{4.4.1})
\begin{equation}
\label{eqn: SES on snc1}
\mathbb D_{D_0}^H\to \mathbb D_D^H\to \beta_*\mathbb D^H_{D\smallsetminus D_0}\xrightarrow{+1}    
\end{equation}
which is a short exact sequence in $MHM(X)$. Indeed, $\beta$ is an affine open embedding, and hence $\beta_*$ preserves the perverse structure. Therefore by the induction hypothesis, $\beta_*\mathbb D_{D\smallsetminus D_0}\in MHM(X)$ and thus $\mathbb D_D^H\in MHM(X)$.

Since $D\smallsetminus D_0=D_1\smallsetminus D_0$, the open embedding $\beta$ factors through the open embedding $\beta_1:D\smallsetminus D_0\to D_1$. Using \cite{Saito90}*{4.4.1} again, there exists a distinguished triangle
\begin{equation}
\label{eqn: SES on snc2}
\mathbb D_{D_1}^H\to \beta_*\mathbb D^H_{D\smallsetminus D_0}\to \mathbb D^H_{D_0\cap D_1}\xrightarrow{+1}.
\end{equation}
This is valid because for the closed embedding $\alpha_1:D_0\cap D_1\to D_1$, the pullback $\alpha_1^!\mathbb D_{D_1}\isom \mathbb D_{D_0\cap D_1}[-1]$. Likewise, this is a short exact sequence in $MHM(X)$.

Since the weight filtration is strict for morphisms in the category of mixed Hodge modules, \eqref{eqn: SES on snc1} and \eqref{eqn: SES on snc2} remain to be short exact sequences in $MH(X,w)$ upon taking graded pieces $\gr^W_w$. Therefore, we apply the induction hypothesis on the snc divisors $D_1\subset X$ and $D_0\cap D_1\subset D_0$ to obtain the decomposition using the fact that $MH(X,w)$ is semisimple.

The first nonzero term in the Hodge filtration, $F_0 \mathbb D^H_D\isom \omega_D$, easily follows from the fact that $D$ has Du Bois singularities and the duality Lemma \ref{lem: duality} (see \cite{Saito00}*{Corollary 0.3}).
\end{proof}

Combining Lemma \ref{lem: decomposition snc} and \eqref{eqn: cyclic cover pullback}, it is easy to see that $\mathbb D^H_{D'}\in MHM(X')$ and
\begin{equation}
\label{eqn: decomposition cyclic cover}
\gr^W_w\mathbb D^H_{D'}=\bigoplus_{I\subset I_D, |I|=n+w}\mathbb D^H_{D'_I}.
\end{equation}
Consider the following diagram:
\begin{equation*}
\xymatrix{
{D'\smallsetminus R'}\ar[r]^-{\iota'}\ar[d]_-{j'}& {X'\smallsetminus R'}\ar[d]^-{j'}\\
{D'}\ar[r]^-{\iota'}&{X'}
}
\end{equation*}
where $\iota':D'\to X'$ is the closed embedding and $j':X'\smallsetminus R'\to X'$ is the open embedding. Similar to the previous lemma, we investigate the properties of the object
$$
\M':=j'_*\mathbb D_{D'\smallsetminus R'}^H\in D^bMHM(X').
$$

\begin{prop}
\label{prop: cyclic cover MHM}
With the notation in the previous paragraphs, $\M'\in MHM(X')$ and if $w\in [-n+1,0]$, then
$$
\gr^W_{w}\M'=\bigoplus_{I}\mathbb D^H_{(D'\cup R')_I} \quad \mathrm{for\;all\;\;} I\subset I_D\sqcup I_R \mathrm{\;\;with\;\;} I\nsubseteq I_R, |I|=n+w
$$
in $MH(X', w)$. If $w\notin [-n+1,0]$, then $\gr^W_{w}\M'=0$. Furthermore, the first nonzero term in the Hodge filtration of $\M'$ is
$$
F_{0}\M'\isom \omega_{D'}(R').
$$
\end{prop}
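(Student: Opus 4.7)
The plan is to realize $\M'$ as the cokernel of a natural inclusion of mixed Hodge modules on $X'$, by stratifying the reduced snc divisor $E := D \cup R$ via its open part $D \smallsetminus R$ and closed complement $R$, and then transferring this to $X'$ via the cyclic cover $\pi$.

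First I would set up a distinguished triangle on $X'$. Let $E' := D' \cup R' = \pi^* E$, and denote by $k' : D' \smallsetminus R' \hookrightarrow E'$ the open embedding and $\ell' : R' \hookrightarrow E'$ the closed embedding. Applying Proposition \ref{prop: Saito triangle} and using $\ell'^! \dual = \dual \ell'^*$ together with $\dim R' = \dim E' = n-1$ to identify $\ell'^! \mathbb D^H_{E'} \isom \mathbb D^H_{R'}$, and then pushing forward via the closed embedding $E' \hookrightarrow X'$, one obtains a distinguished triangle
$$\mathbb D^H_{R'} \to \mathbb D^H_{E'} \to \M' \xrightarrow{+1}$$
in $D^b MHM(X')$. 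By Lemma \ref{lem: decomposition snc} applied to the snc divisors $E$ and $R$ downstairs, combined with the identity $\mathbb D^H_{Z'} = \pi^! \mathbb D^H_Z$ for any smooth stratum $Z$ of $D \cup R$ with $Z' := \pi^* Z$ (the same argument as in \eqref{eqn: cyclic cover pullback}), applying the exact functor $\pi^!$ gives $\mathbb D^H_{E'}, \mathbb D^H_{R'} \in MHM(X')$ together with
$$\gr^W_w \mathbb D^H_{E'} = \bigoplus_{I \subset I_D \sqcup I_R,\, |I| = n+w} \mathbb D^H_{E'_I}, \qquad \gr^W_w \mathbb D^H_{R'} = \bigoplus_{I \subset I_R,\, |I| = n+w} \mathbb D^H_{E'_I},$$
using $R'_I = E'_I$ when $I \subset I_R$.

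Next I would establish that $\mathbb D^H_{R'} \to \mathbb D^H_{E'}$ is injective in $MHM(X')$. By semisimplicity of $MH(X', w)$ and the absence of nonzero morphisms between polarizable Hodge modules with distinct strict supports, the induced map on $\gr^W_w$ decomposes as a direct sum of scalar morphisms $\mathbb D^H_{E'_I} \to \mathbb D^H_{E'_I}$ indexed by $I \subset I_R$. Each scalar is nonzero because the original map arises from the adjunction $\ell'_* \ell'^! \mathbb D^H_{E'} \to \mathbb D^H_{E'}$, which on a Zariski neighborhood of a generic point of $R'_I$ reduces to the canonical inclusion for a purely snc local picture (Lemma \ref{lem: decomposition snc}), where it is manifestly the identity on this summand. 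Once injectivity holds, the distinguished triangle promotes to a short exact sequence in $MHM(X')$,
$$0 \to \mathbb D^H_{R'} \to \mathbb D^H_{E'} \to \M' \to 0,$$
whence $\M' \in MHM(X')$, and by strictness of $W$, $\gr^W_w \M'$ is the quotient of graded pieces, i.e.\ the direct sum of $\mathbb D^H_{E'_I}$ over $I \subset I_D \sqcup I_R$ with $I \nsubseteq I_R$ and $|I| = n+w$. The range $w \in [-n+1, 0]$ follows from $1 \le |I| \le n$ for such $I$.

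For the first nonzero term in the Hodge filtration, strictness of $F$ applied to the short exact sequence yields
$$0 \to F_0 \mathbb D^H_{R'} \to F_0 \mathbb D^H_{E'} \to F_0 \M' \to 0.$$
Downstairs, Lemma \ref{lem: decomposition snc} gives $F_0 \mathbb D^H_E \isom \omega_E$ and $F_0 \mathbb D^H_R \isom \omega_R$. Since $\pi$ is finite and $\mathbb D^H_{E'} = \pi^! \mathbb D^H_E$, $\mathbb D^H_{R'} = \pi^! \mathbb D^H_R$, Hodge-theoretic compatibility of $\pi^!$ for finite morphisms (via Grothendieck duality for $\pi$ together with the explicit cyclic-cover eigensheaf decomposition of $\pi_*\omega_{X'}$) identifies $F_0 \mathbb D^H_{E'} \isom \omega_{E'}$ and $F_0 \mathbb D^H_{R'} \isom \omega_{R'}$. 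The cokernel $\omega_{E'}/\omega_{R'}$ is then identified with $\omega_{D'}(R')$ by a local computation in the cyclic-cover coordinates of \eqref{eqn: cyclic cover hypersurface}, essentially the Poincar\'e residue along $D'$ with a log pole along $R'$. The main obstacle will be precisely this last step: interpreting $\omega_{D'}(R')$ as the appropriate reflexive extension on the possibly singular $D'$, and verifying that the image of $\omega_{R'}$ inside $\omega_{E'}$ coincides with the kernel of the residue map, through a hands-on analysis in the coordinates $t^m = x_1^{e_1} \cdots x_a^{e_a}$.
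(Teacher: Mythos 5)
Your triangle $\mathbb D^H_{R'} \to \mathbb D^H_{E'} \to \M' \xrightarrow{+1}$ is correct and is a genuinely different route from the paper. The paper instead works on the ambient cyclic cover $X'$, forming $\M'$ as the cone of the natural map $j'_*\mathbb D^H_{X'\smallsetminus R'}\to j'_*j'_{0*}\mathbb D^H_{X'\smallsetminus (R'\cup D')}$ and invoking three such triangles plus semisimplicity to isolate the graded pieces. Your decomposition over $E'$ is conceptually tighter for the weight-theoretic part, and in fact the injectivity you labor over is automatic: $k'$ is an affine open embedding, so $k'_*\mathbb D^H_{D'\smallsetminus R'}$ is a single mixed Hodge module; a distinguished triangle with all three terms in the heart is a short exact sequence, so $\mathbb D^H_{R'}\to\mathbb D^H_{E'}$ is injective without any appeal to Schur's lemma on strict supports. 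The rest of your weight analysis, the identification $\ell'^!\mathbb D^H_{E'}\isom\mathbb D^H_{R'}$ via $\dim R'=\dim E'$, the quotient description of $\gr^W_w\M'$, and the range $w\in[-n+1,0]$, are all correct.

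The genuine gap is the computation of the Hodge filtration. You need $F_0\mathbb D^H_{E'}\isom\omega_{E'}$ and $F_0\mathbb D^H_{R'}\isom\omega_{R'}$, which (via Lemma \ref{lem: duality}) amounts to knowing that the divisors $E'$ and $R'$ in the quotient-singular $X'$ are themselves Du Bois and Cohen--Macaulay. That is a true fact, but your stated justification, namely ``Hodge-theoretic compatibility of $\pi^!$ for finite morphisms via Grothendieck duality and the eigensheaf decomposition of $\pi_*\omega_{X'}$'', does not establish it: Lemma \ref{lem: decomposition snc} gives $F_0\mathbb D^H_E\isom\omega_E$ for the snc divisor downstairs, but there is no lemma in the paper's toolbox asserting that $F_{p_0}(\pi^!\M)$ equals the Grothendieck-theoretic $\pi^!$ of $F_{p_0}\M$ for a finite branched $\pi$, and Remark \ref{rmk: eigensheaf decomposition} only decomposes a pushforward that is already identified. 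The paper sidesteps this entirely by never taking $F_0$ of a Hodge module supported on a divisor: it uses that $(X',R')$ and $(X',R'\cup D')$ are log rational pairs on the \emph{rational} $X'$, so Theorem \ref{thm: Du Bois complex, rational singularities} together with \eqref{eqn: rationally smooth} and Lemma \ref{lem: duality} gives $F_0j'_*\mathbb D^H_{X'\smallsetminus R'}\isom\omega_{X'}(R')$ and $F_0j'_*j'_{0*}\mathbb D^H_{X'\smallsetminus(R'\cup D')}\isom\omega_{X'}(R'+D')$ directly, and the cokernel is then $\omega_{D'}(R')$ by adjunction and $S_2$. To rescue your approach you would need to separately prove $E'$ and $R'$ are Du Bois (for instance by showing $(X',E')$ and $(X',R')$ are Du Bois pairs with $X'$ rational, then using the triangle $\DB^0_{X',E'}\to\DB^0_{X'}\to\DB^0_{E'}$), or to prove the finite-pullback compatibility of the first nonzero Hodge filtration term as a standalone lemma.
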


\begin{proof}
Notice that $\mathbb D^H_{D'\smallsetminus R'}\in MHM(X'\smallsetminus R')$ and $j'$ is an affine open embedding, which implies that $\M'\in MHM(X')$. We prove the rest of the statements.

Denote the open embeddings $j:X\smallsetminus R\to X$ and $j_0:X\smallsetminus (R\cup D)\to X\smallsetminus R$. As in the proof of Lemma \ref{lem: decomposition snc}, we have the distinguished triangle
$$
\mathbb D^H_{X}\to j_*\mathbb D^H_{X\smallsetminus R}\to \mathbb D^H_{R}\xrightarrow{+1}
$$
which is a short exact sequence in $MHM(X)$. Recall that $\pi^!j_*=j'_*\pi^!$ (\cite{Saito90}*{4.4.3}). Applying the functor $\pi^!$, we have the distinguished triangle
$$
\mathbb D^H_{X'}\to j'_*\mathbb D^H_{X'\smallsetminus R'}\to \mathbb D^H_{R'}\xrightarrow{+1}.
$$
Denote $j_0':X'\smallsetminus (R'\cup D')\to X'\smallsetminus R'$, then we likewise have
$$
\mathbb D^H_{X'}\to j'_*j'_{0*}\mathbb D^H_{X'\smallsetminus (R'\cup D')}\to \mathbb D^H_{R'\cup D'}\xrightarrow{+1}.
$$
In addition, we have a distinguished triangle (\cite{Saito90}*{4.4.1})
\begin{equation}
\label{eqn: SES for M'}
j'_*\mathbb D^H_{X'\smallsetminus R'}\to j'_*j'_{0*}\mathbb D^H_{X'\smallsetminus (R'\cup D')}\to \M'\xrightarrow{+1},
\end{equation}
since $\M'=j'_*(\iota')^!\mathbb D^H_{X'\smallsetminus R'}[1]$. Taking graded pieces $\gr^W_w$ of the previous three distinguished triangles and applying \eqref{eqn: decomposition cyclic cover} with the semisimplicity of $MH(X',w)$, we obtain the decomposition
$$
\gr^W_{w}\M'=\bigoplus_{I}\mathbb D^H_{(D'\cup R')_I} \quad \mathrm{for\;all\;\;} I\subset I_D\sqcup I_R \mathrm{\;\;with\;\;} I\nsubseteq I_R, |I|=n+w.
$$
In particular, $\gr^W_w\M'=0$ if $w\notin [-n+1,0]$.

Since $X'$ has rational singularities, $(X',R')$ and $(X',R'\cup D')$ are log rational pairs with ideal sheaves $\I_{X',R'}=\O_{X'}(-R')$ and $\I_{X',R'\cup D'}=\O_{X'}(-R'-D')$. Dualizing Theorem \ref{thm: Du Bois complex, rational singularities} and using \eqref{eqn: rationally smooth} with Lemma \ref{lem: duality}, we deduce that the first nonzero terms in the Hodge filtrations are
$$
F_0j'_*\mathbb D^H_{X'\smallsetminus R'}\isom\omega_{X'}(R'),\quad F_0j'_*j'_{0*}\mathbb D^H_{X'\smallsetminus (R'\cup D')}\isom\omega_{X'}(R'+D').
$$
Applying these to the first nonzero term in the Hodge filtration $F_0$ of \eqref{eqn: SES for M'}, we obtain a short exact sequence
$$
0\to \omega_{X'}(R')\to \omega_{X'}(R'+D')\to F_0\M'\to 0.
$$
Notice that $F_0\M'$ is the first nonzero term in the Hodge filtration of $\M'$. Over the snc locus of $(X',R'\cup D')$, the morphism $\omega_{X'}(R')\to \omega_{X'}(R'+D')$ is the natural inclusion. Since both $\omega_{X'}(R')$ and $\omega_{X'}(R'+D')$ are $S_2$-sheaves, this morphism should be the natural inclusion over all $X'$. Therefore,
$$
F_0\M'\isom \omega_{D'}(R')
$$
by the adjunction formula.
\end{proof}

\begin{cor}
\label{cor: cyclic cover MHM on X'}
In the setting of Proposition \ref{prop: cyclic cover MHM}, $j'_*\mathbb D^H_{X'\smallsetminus R'}\in MHM(X')$ and if $w\in [-n,0]$, then
$$
\gr^W_{w}j'_*\mathbb D^H_{X'\smallsetminus R'}=\bigoplus_{I}\mathbb D^H_{ R'_I} \quad \mathrm{for\;all\;\;} I\subset I_R \mathrm{\;\;with\;\;} |I|=n+w
$$
in $MH(X',w)$. If $w\notin [-n,0]$, then $\gr^W_wj'_*\mathbb D^H_{X'\smallsetminus R'}=0$. Furthermore, the first nonzero term in the Hodge filtration of $j'_*\mathbb D^H_{X'\smallsetminus R'}$ is
$$
F_{0}j'_*\mathbb D^H_{X'\smallsetminus R'}\isom \omega_{X'}(R').
$$
\end{cor}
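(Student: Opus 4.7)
The proof will run in close parallel to that of Proposition \ref{prop: cyclic cover MHM}, but avoiding the extra step that involved cutting out $D$. First I would note that $j':X'\smallsetminus R'\to X'$ is an affine open embedding (since $R'$ is an effective Cartier divisor), so $j'_*$ is $t$-exact with respect to the perverse structure on $D^b MHM$; consequently $j'_*\mathbb D^H_{X'\smallsetminus R'}\in MHM(X')$. This also gives the vanishing of $\gr^W_w j'_*\mathbb D^H_{X'\smallsetminus R'}$ for $w\notin[-n,0]$ once we know the two weights produced by the triangle below.

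For the weight decomposition, I would invoke the distinguished triangle
$$
\mathbb D^H_{X'}\to j'_*\mathbb D^H_{X'\smallsetminus R'}\to \mathbb D^H_{R'}\xrightarrow{+1}
$$
already established in the proof of Proposition \ref{prop: cyclic cover MHM} by applying $\pi^!$ to the analogous triangle on $X$ and using $\pi^!j_*=j'_*\pi^!$. Since all three terms lie in $MHM(X')$, this is a short exact sequence there, and strictness of the weight filtration for morphisms of mixed Hodge modules makes it remain short exact after applying $\gr^W_w$. The object $\mathbb D^H_{X'}$ is pure of weight $-n$ (from \eqref{eqn: rationally smooth}), so it contributes only at $w=-n$; for $\mathbb D^H_{R'}$, the decomposition \eqref{eqn: decomposition cyclic cover} applied to the snc divisor $R'\subset X'$ gives
$$
\gr^W_w\mathbb D^H_{R'}=\bigoplus_{I\subset I_R,\;|I|=n+w}\mathbb D^H_{R'_I}
$$
for $w\in[-n+1,0]$ and vanishes outside this range. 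By semisimplicity of $MH(X',w)$ (see \cite{Saito88}*{Corollaire 5.2.13}), every short exact sequence in $MH(X',w)$ splits, so assembling the contributions yields exactly the claimed formula
$$
\gr^W_w j'_*\mathbb D^H_{X'\smallsetminus R'}=\bigoplus_{I\subset I_R,\;|I|=n+w}\mathbb D^H_{R'_I},\qquad w\in[-n,0].
$$

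For the last assertion, the isomorphism $F_0 j'_*\mathbb D^H_{X'\smallsetminus R'}\isom \omega_{X'}(R')$ was in fact proved inside the proof of Proposition \ref{prop: cyclic cover MHM}: $(X',R')$ is a log rational pair because $X'$ has finite quotient (hence rational) singularities, so dualizing Theorem \ref{thm: Du Bois complex, rational singularities} via Lemma \ref{lem: duality} and combining with \eqref{eqn: rationally smooth} identifies $F_0 j'_*\mathbb D^H_{X'\smallsetminus R'}$ with $\omega_{X'}(R')$. Thus it suffices to cite that computation.

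Since all three ingredients (affineness of $j'$, the distinguished triangle from $\pi^!$, and the first nonzero Hodge filtration piece) have already appeared in the proof of Proposition \ref{prop: cyclic cover MHM}, there is no genuine new obstacle here; the corollary is structurally simpler because one does not need to introduce the two-step filtration $X'\supset X'\smallsetminus R'\supset X'\smallsetminus(R'\cup D')$. The only point that requires care is confirming semisimplicity can be applied weight by weight to upgrade the graded short exact sequence to a direct sum decomposition.
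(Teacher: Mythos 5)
Your proposal is correct and takes essentially the same approach as the paper: the paper's proof of this corollary is the single sentence ``This follows from the proof of Proposition \ref{prop: cyclic cover MHM},'' and you have accurately reconstructed exactly the relevant fragments of that proof (affineness of $j'$, the distinguished triangle $\mathbb D^H_{X'}\to j'_*\mathbb D^H_{X'\smallsetminus R'}\to \mathbb D^H_{R'}$, strictness of the weight filtration together with semisimplicity of $MH(X',w)$, and the computation of $F_0 j'_*\mathbb D^H_{X'\smallsetminus R'}\isom\omega_{X'}(R')$ via the log rationality of $(X',R')$ and Lemma \ref{lem: duality}). The only cosmetic point is that your opening remark about affineness ``giving'' the vanishing outside $[-n,0]$ is slightly out of order logically (the vanishing actually follows from the weight-graded decomposition you derive afterward), but you do establish it correctly in the end.
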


Here, we denote $\mathbb D^H_{ R'_\emptyset}:=\mathbb D^H_{X'}$ by convention, so that $\gr^W_{-n}j'_*\mathbb D^H_{X'\smallsetminus R'}=\mathbb D^H_{X'}$.

\begin{proof}
This follows from the proof of Proposition \ref{prop: cyclic cover MHM}.
\end{proof}

\begin{rmk}
\label{rmk: eigensheaf decomposition}
Since $\pi:X'\to X$ and its restriction $\pi|_{D'}:D'\to D$ are degree $m$ cyclic covers, the pushforward $\pi_*F_0\M'\isom \pi_*\omega_{D'}(R')$ of the first nonzero term in the Hodge filtration admits a $\mu_m$-action (i.e. an action by the group $\mu_m$ of $m$-th roots of unity). More precisely, there exists a natural decomposition
$$
\pi_*\omega_{D'}(R')\isom \bigoplus_{i=0}^{m-1}\omega_D(R)\tensor L^{\tensor i}(-\lfloor i\Delta\rfloor)
$$
into $\mu_m$-eigensubsheaves. In fact, $R'=\pi^*R$, so this follows from the general fact on a cyclic covering of a demi-normal scheme (see \cite{Kollar13}*{2.44.5}).

Likewise, for the pushforward $\pi_*F_0j'_*\mathbb D^H_{X'\smallsetminus R'}\isom \pi_*\omega_{X'}(R')$, there exists a natural decomposition
$$
\pi_*\omega_{X'}(R')\isom \bigoplus_{i=0}^{m-1}\omega_X(R)\tensor L^{\tensor i}(-\lfloor i\Delta\rfloor)
$$
into $\mu_m$-eigensubsheaves.
\end{rmk}

\subsection{Log canonical singularities are Du Bois}
\label{sec: lc implies Du Bois}

Based on the machinery developed, we give an alternative proof of the theorem of Koll\'ar-Kov\'acs, that log canonical singularities are Du Bois.

\textbf{Setup.} Let $f:X\to Y$ be a projective morphism from an irreducible smooth variety $X$ to an irreducible normal variety $Y$. Let $\Delta$ be an snc $\Q$-divisor on $X$ with coefficients less than or equal to $1$. Define $\Delta^{=1}$ as the sum of the components of $\Delta$ with coefficients equal to $1$ and $\Delta^{<1}:=\Delta-\Delta^{=1}$. Then the strata of the reduced snc pair $(X,\Delta^{=1})$ are the log canonical centers of $(X,\Delta)$.

Let $Z_0\subset Y$ be the $f$-image of a union of log canonical centers of $(X,\Delta)$. Subsequently, let $Z_1\subsetneq Z_0$ be the $f$-image of the union of log canonical centers of $(X,\Delta)$ in $f^{-1}(Z_0)$, not mapping surjectively onto an irreducible component of $Z_0$. Notice that $Z_1$ may be empty.

\begin{thm}
\label{thm: lc admits log rational stratification}
With the setup above, if $K_X+\Delta\sim_{\Q,f}0$ and the natural morphism
$$
\O_Y\to f_*\O_X(\lceil-\Delta^{<1}\rceil)$$
is an isomorphism, then the pair $(Z_0,Z_1)$ is log rational.
\end{thm}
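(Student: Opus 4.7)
The plan is to verify the converse direction of Corollary \ref{cor: criterion for log rational pair}: we construct a proper surjective morphism $g:(W,W_1)\to (Z_0,Z_1)$ from a log rational pair $(W,W_1)$ such that the natural morphism $\I_{Z_0,Z_1}\to Rg_*\I_{W,W_1}$ admits a left quasi-inverse.

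\medskip

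First, using $K_X+\Delta\sim_{\Q,f}0$, I would pass to a cyclic cover to absorb $\Delta^{<1}$. Choose $m>0$ with $m\Delta$ integral and $m(K_X+\Delta)\sim f^*M$ for some Cartier $M$ on $Y$; this supplies a line bundle $L$ on $X$ and the cyclic cover data needed for Section \ref{sec: cyclic coverings}, producing $\pi:X'\to X$. Up to replacing $\Delta^{<1}$ by its effective part via an auxiliary modification using the hypothesis $f_*\O_X(\lceil-\Delta^{<1}\rceil)=\O_Y$, we may arrange that the setup $(D,R,\Delta)$ of Section \ref{sec: cyclic coverings} is given by $D=$ the union of components of $\Delta^{=1}$ whose $f$-image lies in $Z_0$, $R=$ the remaining components of $\Delta^{=1}$, and $\Delta=\Delta^{<1}$ (or its effective part). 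Proposition \ref{prop: cyclic cover MHM} then furnishes $\M':=j'_*\mathbb{D}^H_{D'\smallsetminus R'}\in MHM(X')$ with $F_0\M'\isom \omega_{D'}(R')$, and a weight filtration decomposing into polarizable Hodge modules with strict supports along strata of $D'\cup R'$.

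\medskip

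Let $W\subset D$ be the union of minimal $(D\cup R)$-strata contained in $D$, and set $W_1:=W\cap f^{-1}(Z_1)$, $W':=\pi^{-1}(W)$. Then $f|_W:W\to Z_0$ is surjective with generically finite irreducible components (by the minimality of $W$), and $(W,W_1)$ inherits an snc-type structure making it a log rational pair. Extracting the summand of $\gr^W\M'$ with strict support inside $W'$ (using Lemma \ref{lem: decomposition snc} on the snc pieces), we obtain a surjection $\M'\twoheadrightarrow \M'_{W'}$ onto a pure polarizable Hodge module, which on the $F_0$-level realizes the adjunction-restriction $\omega_{D'}(R')\twoheadrightarrow \omega_{W'}(R'|_{W'})$. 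Taking $\mu_m$-invariants (Remark \ref{rmk: eigensheaf decomposition}) and using $K_X+\Delta^{=1}\sim_{\Q,f}-\Delta^{<1}$, this descends on $X$ to the natural restriction between the corresponding ideal-type sheaves.

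\medskip

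Now push forward along $f\circ\pi$. Proposition \ref{prop: torsion freeness of direct image of MHM} forces every associated prime of $R^i(f\pi)_*F_0\M'$ to be the $f$-image of a log canonical center of $(X,\Delta)$; combined with the hypothesis $f_*\O_X(\lceil-\Delta^{<1}\rceil)=\O_Y$ and the $\mu_m$-eigensheaf decomposition, the $\mu_m$-invariant part of $(f\pi)_*F_0\M'$ is identified with $\I_{Z_0,Z_1}$, and the analogous computation for the $W'$-summand identifies its invariant pushforward with $R(f|_W)_*\I_{W,W_1}$. Since $(f\pi)_*\M'_{W'}$ is pure, Saito's Decomposition Theorem \ref{thm: Saito's decomposition theorem} gives semisimplicity, so the surjection $(f\pi)_*\M'\twoheadrightarrow (f\pi)_*\M'_{W'}$ splits in $D^bMHM(Y)$. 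Passing to the first nonzero Hodge filtration piece produces the required left quasi-inverse, and Corollary \ref{cor: criterion for log rational pair} concludes. The main obstacle will be the identification of the $\mu_m$-invariant pushforward with $\I_{Z_0,Z_1}$: this demands delicate bookkeeping of the (possibly negative) components of $\Delta^{<1}$, the twist induced by the cyclic cover, and the subtle use of $f_*\O_X(\lceil-\Delta^{<1}\rceil)=\O_Y$ in computing the invariant part of the direct image.
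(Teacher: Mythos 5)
Your overall strategy agrees with the paper: cut down to a minimal stratum of a log resolution, realize the restriction map as a morphism of mixed Hodge modules onto a \emph{pure} Hodge module via a cyclic cover, use semisimplicity and Saito's Decomposition Theorem to split, and invoke the converse direction of Corollary~\ref{cor: criterion for log rational pair}. Proposition~\ref{prop: torsion freeness of direct image of MHM}, Proposition~\ref{prop: cyclic cover MHM}, and Remark~\ref{rmk: eigensheaf decomposition} are indeed the right tools. But two specific points in your sketch do not hold up.

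First, your choice $D = D_0 \cup D_1$ (in the paper's notation, all components of $\Delta^{=1}$ over $Z_0$) breaks the eigensheaf computation. The paper keeps $D_1$ \emph{out} of the divisor $D$ of Section~\ref{sec: cyclic coverings} and instead bakes it into the line bundle $L = -(K_X+D_0+D_1+R-A)$. With that choice, the $\mu_m$-eigensheaf of $\pi_*\omega_{D_{0,k}'}(R')$ is $\O_{D_{0,k}}(A-D_1)$, and this is exactly what makes $f_*\O_{D_0}(A-D_1)$ identify with $\I_{Z_0,Z_1}$ rather than with $\O_{Z_0}$. If instead you put $D_1$ into the cyclic-cover divisor $D$, the resulting eigensheaf is $\O_{D_0\cup D_1}(A)$, whose pushforward has nothing to do with the ideal sheaf $\I_{Z_0,Z_1}$: you have lost the vanishing condition along $Z_1$. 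Furthermore, taking minimal $(D\cup R)$-strata contained in $D_0\cup D_1$ would allow strata lying entirely inside $D_1$, which map into $Z_1\subsetneq Z_0$ rather than onto a component of $Z_0$; the paper avoids this by working with minimal strata of $D_{0,k}\cup R$ contained in $D_{0,k}$, one per irreducible component of $Z_0$.

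Second, you skip a necessary injectivity argument. Purity of $(f\circ\pi)_*\mathbb D^H_{(D'_{0,k}\cup R')_{I_k}}$ gives only that the \emph{image} of the pushforward map is a direct summand of the target; to convert a splitting of the image into a left inverse of the map $\I_{Z_0,Z_1}\to Rf_*\O_W(-D_1|_W)$, you must also show the map $\rho_m : f_*\O_{D_{0,k}}(A-D_1) \to f_*\O_{(D_{0,k}\cup R)_{I_k}}(A-D_1)$ is \emph{injective}. The paper proves this via a global-section argument: a section of $\I_{Z_0,Z_1}$ in $\ker\rho_m$ must vanish along $(D_{0,k}\cup R)_{I_k}$, which (by Remark~\ref{rmk: Ambro}) surjects onto $Z_{0,k}$, forcing the section to vanish along the whole component $Z_{0,k}$ and hence to be zero. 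Without this step, "passing to the first nonzero Hodge filtration piece" does not produce the claimed left quasi-inverse. You do flag the identification of $\I_{Z_0,Z_1}$ as the main obstacle, but the $D=D_0\cup D_1$ choice makes that identification fail, and the injectivity issue is not mentioned at all.
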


In the proof, we frequently use the following remark from Ambro \cite{Ambro11}*{Remark 4.1} which we state separately for convenience.

\begin{rmk}
\label{rmk: Ambro}
For a smooth variety $X$ and a reduced snc divisor $E=\cup_{i\in I}E_i$ with smooth irreducible components $E_i$, let $I'$ and $I''$ be nonempty subsets of $I$. Suppose $C$ is an irreducible component of $\cap_{i\in I'}E_i$. Then $C\subset \cup_{i\in I''}E_i$ if and only if $I'\cap I''\neq \emptyset$.
\end{rmk}

\begin{proof}[Proof of Theorem \ref{thm: lc admits log rational stratification}]
We take a log resolution of singularities $\mu:(\widetilde X,\widetilde \Delta) \to (X,\Delta)$ so that $\mu^*(K_X+\Delta)=K_{\widetilde X}+\widetilde \Delta$ and the $f\circ\mu$-inverse images $(f\circ \mu)^{-1}(Z_0)$ and $(f\circ \mu)^{-1}(Z_1)$ are reduced snc divisors. We may further assume that the union
$$
\Supp(\widetilde\Delta)\cup(f\circ \mu)^{-1}(Z_0)\cup(f\circ \mu)^{-1}(Z_1)
$$
is an snc divisor. It is easy to check that $K_{\widetilde X}+\widetilde \Delta\sim_{\Q,f\circ \mu}0$ and the natural morphism 
$$
\O_Y\to(f\circ\mu)_*\O_{\widetilde X}(\lceil-\widetilde\Delta^{<1}\rceil)
$$
is an isomorphism. Moreover, the $\mu$-images of log canonical centers of $(\widetilde X,\widetilde \Delta)$ are log canonical centers of $(X,\Delta)$. Hence, without loss of generality, we assume that $f^{-1}(Z_0)$ and $f^{-1}(Z_1)$ are reduced snc divisors such that
$$
\Supp(\Delta)\cup f^{-1}(Z_0)\cup f^{-1}(Z_1)
$$
is an snc divisor. Additionally, we assume $K_X+\Delta\sim_\Q 0$ and $Y$ is embeddable, since the statements are local on $Y$.

Let $D_0$ (resp. $D_1$) be the sum of divisors in both $\Delta^{=1}$ and $f^{-1}(Z_0\smallsetminus Z_1)$ (resp. $f^{-1}(Z_1)$), and let $R:=\Delta^{=1}-D_0-D_1$, which corresponds to the sum of divisors in $\Delta^{=1}$ not lying above $Z_0$. For convenience, write $\Delta^{<1}=\Delta^b-A$ where $A=\lceil-\Delta^{<1}\rceil$ is an effective integral divisor and $\Delta^b$ is an effective divisor with coefficients less than $1$. In summary,
$$
\Delta=D_0+D_1+R+\Delta^b-A.
$$

Let a line bundle $L:=-(K_X+D_0+D_1+R-A)$ so that $L\sim_\Q \Delta^b$. Subsequently, there exists a cyclic cover $\pi:X'\to X$ of degree $m$ associated to the isomorphism $L^{\tensor m}\isom \O_X(m\Delta^b)$.

Consider the mixed Hodge module $j'_*\mathbb D^H_{X'\smallsetminus R'}\in MHM(X')$ in Corollary \ref{cor: cyclic cover MHM on X'}. We have the exact sequence
$$
f_*\O_X(A-D_1)\to f_*\O_{D_0}(A-D_1)\to R^1f_*\O_X(A-D_0-D_1).
$$
Notice that $R^1f_*\O_X(A-D_0-D_1)$ is the $\mu_m$-eigensheaf of $R^1(f\circ \pi)_*F_0j'_*\mathbb D^H_{X'\smallsetminus R'}$, since we have the isomorphism $\omega_X(R)\tensor L\isom\O_X(A-D_0-D_1)$ and Remark \ref{rmk: eigensheaf decomposition}. Applying Proposition \ref{prop: torsion freeness of direct image of MHM} with Corollary \ref{cor: cyclic cover MHM on X'}, every associated prime of $R^1f_*\O_X(A-D_0-D_1)$ is the $f$-image of the generic point of a stratum of $(X,R)$. Hence, the associated primes are not contained in $Z_0$ by Remark \ref{rmk: Ambro}. Since $f_*\O_{D_0}(A-D_1)$ is supported on $Z_0$, the second morphism in this exact sequence is a zero map. This induces a surjection
$$
f_*\O_X(A-D_1)\twoheadrightarrow f_*\O_{D_0}(A-D_1).
$$

From the assumption, we have a natural isomorphism $\O_Y\isom f_*\O_X(A)$, and thus, a natural isomorphism $\I_{Y,Z_1}\isom f_*\O_X(A-D_1)$. Accordingly, $\I_{Y,Z_0}$ is the kernel of this surjection, and from $\I_{Z_0,Z_1}\isom \I_{Y,Z_1}/\I_{Y,Z_0}$ we obtain a natural isomorphism
$$
\I_{Z_0,Z_1}\isom f_*\O_{D_0}(A-D_1).
$$

Let $\left\{Z_{0,k}\right\}$ be the set of irreducible components of $Z_0$, and $D_{0,k}$ be the sum of divisors in both $D_0$ and $f^{-1}(Z_{0,k})$. Then, it is easy to see that $D_{0,k}\cap D_{0,k'}=\emptyset$ if $k\neq k'$ by Remark \ref{rmk: Ambro} applied to $D_0\cup D_1$, and thus $D_0=\coprod_k D_{0,k}$ which implies a natural isomorphism
\begin{equation}
\label{eqn: I isom O(A-D)}
\I_{Z_0,Z_1}\isom \bigoplus_k f_*\O_{D_{0,k}}(A-D_1).
\end{equation}

Consider the mixed Hodge module $\M'_{D_{0,k}}\in MHM(X')$ in Proposition \ref{prop: cyclic cover MHM} by letting $D=D_{0,k}$ and $R=R$. Then, $\O_{D_{0,k}}(A-D_1)$ is the $\mu_m$-eigensheaf of $\pi_*F_0\M'_{D_{0,k}}$, because $\omega_{D_{0,k}}(R)\tensor L\isom \O_{D_{0,k}}(A-D_1)$ (adjunction formula) and Remark \ref{rmk: eigensheaf decomposition}.

Using Proposition \ref{prop: cyclic cover MHM} on the top weight nonzero graded piece of $\M'_{D_{0,k}}$, we have the following surjection
\begin{equation}
\label{eqn: mhm surjection}
\M'_{D_{0,k}}\twoheadrightarrow \mathbb D^H_{(D'_{0,k}\cup R')_{I_k}}   
\end{equation}
in $MHM(X')$ for some index set ${I_k}$ which induces the minimal stratum $(D'_{0,k}\cup R')_{I_k}$ of $D'_{0,k}\cup R'$ contained in $D'_{0,k}$. Recall that $(D'_{0,k}\cup R')_{I_k}$ has cyclic quotient singularities \cite{EV92}*{Lemma 3.24}, so that we have \eqref{eqn: rationally smooth} and the following first nonzero term in the Hodge filtration
$$
F_0\mathbb D^H_{(D'_{0,k}\cup R')_{I_k}}\isom\omega_{(D'_{0,k}\cup R')_{I_k}},
$$
using Lemma \ref{lem: basic IC}. Hence, we have the surjection of the first nonzero terms in the Hodge filtrations of \eqref{eqn: mhm surjection}:
$$
\omega_{D_{0,k}'}(R')\twoheadrightarrow \omega_{(D'_{0,k}\cup R')_{I_k}}.
$$
Since the second object is a reflexive $\O_{(D'_{0,k}\cup R')_{I_k}}$-module, this induces an isomorphism
$$
\omega_{D_{0,k}'}(R')|_{(D'_{0,k}\cup R')_{I_k}}\isom \omega_{(D'_{0,k}\cup R')_{I_k}}.
$$
Indeed, this can be checked using the local description in \eqref{eqn: cyclic cover hypersurface}. In other words, there exists an isomorphism
$$
F_0\M'_{D_{0,k}}|_{(D'_{0,k}\cup R')_{I_k}}\isom F_0\mathbb D^H_{(D'_{0,k}\cup R')_{I_k}}.
$$
The pushforward by $\pi$ admits the $\mu_m$-eigensheaves decomposition, because the subvariety $(D'_{0,k}\cup R')_{I_k}\subset X'$ is invariant under the cyclic group $\mu_m$-action. Therefore, we obtain a surjection
\begin{equation}
\label{eqn: eigensheaves of the first HF}
\O_{D_{0,k}}(A-D_1)\twoheadrightarrow \O_{(D_{0,k}\cup R)_{I_k}}(A-D_1) 
\end{equation}
of $\mu_m$-eigensheaves of $\pi_*F_0\M'_{D_{0,k}}\twoheadrightarrow \pi_*F_0\mathbb D^H_{(D'_{0,k}\cup R')_{I_k}}$.

Apply Lemma \ref{lem: commutativity of direct image and first HF} to the direct image $(f\circ\pi)_*$ of $\M'_{D_{0,k}}\twoheadrightarrow \mathbb D^H_{(D'_{0,k}\cup R')_{I_k}}$:
\begin{equation*}
\xymatrix{
{(f\circ \pi)_*F_{0}\M'_{D_{0,k}}}\ar@{<->}[r]^-{\sim}\ar[d]& {F_{0}\H^0\left((f\circ \pi)_*\M'_{D_{0,k}}\right)}\ar[d]\\
{(f\circ \pi)_*F_{0}\mathbb D^H_{(D'_{0,k}\cup R')_{I_k}}}\ar@{<->}[r]^-{\sim}&{F_{0}\H^0\left((f\circ \pi)_*\mathbb D^H_{(D'_{0,k}\cup R')_{I_k}}\right)}
}
\end{equation*}
Note that $\mathbb D^H_{(D'_{0,k}\cup R')_{I_k}}$ is a polarizable Hodge module of weight $-n+|I_k|$. Hence, the morphism
$$
\H^0\left((f\circ \pi)_*\M'_{D_{0,k}}\right)\xrightarrow{\psi}\H^0\left((f\circ \pi)_*\mathbb D^H_{(D'_{0,k}\cup R')_{I_k}}\right)
$$
is a morphism from a mixed Hodge module to a pure Hodge module. As a consequence, the right term decomposes into $\mathrm{image}(\psi)\oplus \mathrm{coker}(\psi)$. In particular, considering $F_0$ of this decomposition, we obtain a morphism
$$
(f\circ \pi)_*F_{0}\M'_{D_{0,k}}\xrightarrow{\rho}(f\circ \pi)_*F_{0}\mathbb D^H_{(D'_{0,k}\cup R')_{I_k}}
$$
such that the right term decomposes into $\mathrm{image}(\rho)\oplus \mathrm{coker}(\rho)$. Since $\rho$ is $\mu_m$-equivariant, this decomposition can be modified to be $\mu_m$-equivariant by an elementary argument. Therefore,
the morphism of $\mu_m$-eigensheaves
$$
f_*\O_{D_{0,k}}(A-D_1)\xrightarrow{\rho_m} f_*\O_{(D_{0,k}\cup R)_{I_k}}(A-D_1)
$$
induced by \eqref{eqn: eigensheaves of the first HF} has the decomposition $\mathrm{image}(\rho_m)\oplus \mathrm{coker}(\rho_m)$ of the right term.

Consider a section $s$ of $f_*\O_{D_{0,k}}(A-D_1)$. This is also a section of $\I_{Z_0,Z_1}$ by \eqref{eqn: I isom O(A-D)}. If $s$ is in the kernel of $\rho_m$, then this is lifted to the section of $\O_{D_{0,k}}(A-D_1)$ vanishing along $(D_{0,k}\cup R)_{I_k}$. From Remark \ref{rmk: Ambro}, $(D_{0,k}\cup R)_{I_k}$ maps surjectively onto $Z_{0,k}$, and thus $s$ vanishes along $Z_{0,k}$ as a section of $\I_{Z_0,Z_1}$. This implies that $s$ is zero. Therefore, $\rho_m$ is injective, and we have a left inverse of $\rho_m$ obtained from the decomposition.

Recall by Saito's Decomposition Theorem and Lemma \ref{lem: commutativity of direct image and first HF}, we have a non-canonical isomorphism
$$
R(f\circ \pi)_*F_{0}\mathbb D^H_{(D'_{0,k}\cup R')_{I_k}}\isom \bigoplus_i R^i(f\circ \pi)_*F_{0}\mathbb D^H_{(D'_{0,k}\cup R')_{I_k}}[-i].
$$
In particular, taking the $\mu_m$-eigensheaf of $\pi_*F_{0}\mathbb D^H_{(D'_{0,k}\cup R')_{I_k}}$, we obtain a left quasi-inverse of the natural morphism
$$
f_* \O_{(D_{0,k}\cup R)_{I_k}}(A-D_1)\to  Rf_*\O_{(D_{0,k}\cup R)_{I_k}}(A-D_1).
$$
Combined with the left inverse of $\rho_m$, we obtain a left quasi-inverse of the natural morphism
$$
f_*\O_{D_{0,k}}(A-D_1)\to Rf_*\O_{(D_{0,k}\cup R)_{I_k}}(A-D_1).
$$
The direct sum of this for all the irreducible components $Z_{0,k}$ deduces a left quasi-inverse of the natural morphism
$$
\I_{Z_0,Z_1}\to \bigoplus_k Rf_*\O_{(D_{0,k}\cup R)_{I_k}}(A-D_1).
$$
See \eqref{eqn: I isom O(A-D)} for the first term. This factors through $\bigoplus_k Rf_*\O_{(D_{0,k}\cup R)_{I_k}}(-D_1)$. Furthermore, $D_1$ is an snc divisor when restricted to the smooth variety $(D_{0,k}\cup R)_{I_k}$, and $D_1$ maps to $Z_1$. Let $W$ be the disjoint union of $(D_{0,k}\cup R)_{I_k}$ for all $k$.

In conclusion, $\I_{Z_0,Z_1}\to Rf_*\O_W(-D_1|_W)$ admits a left quasi-inverse, so that the converse statement of Corollary \ref{cor: criterion for log rational pair} applies to the morphism $f|_W:(W,D_1|_W)\to (Z_0,Z_1)$. Therefore, $(Z_0,Z_1)$ is a log rational pair.
\end{proof}

\begin{rmk}
The analysis of the associated primes of $R^1f_*\O_X(A-D_0-D_1)$ to deduce \eqref{eqn: I isom O(A-D)} draws inspiration from the arguments of Ambro \cite{Ambro11}. The key addition is the use of the mixed Hodge module $j'_*\mathbb D^H_{X'\smallsetminus R'}$ in Corollary \ref{cor: cyclic cover MHM on X'}.
\end{rmk}

We complete the proof of Theorem \ref{thm: lc implies DB}.

\begin{proof}[Proof of Theorem \ref{thm: lc implies DB}]
We reduce to the setup of Theorem \ref{thm: lc admits log rational stratification} via Chow's lemma and a log resolution of singularities $\mu:(\widetilde X,\widetilde \Delta) \to (X,\Delta)$ such that $\mu^*(K_X+\Delta)=K_{\widetilde X}+\widetilde \Delta$. Indeed, there is an inclusion $\mu_*\O_{\widetilde X}(\lceil-\widetilde\Delta^{<1}\rceil)\to \O_X(\lceil-\Delta^{<1}\rceil)$, and thus the natural morphism 
$$
\O_Y\to(f\circ\mu)_*\O_{\widetilde X}(\lceil-\widetilde\Delta^{<1}\rceil)
$$
is an isomorphism. In addition, we have $K_{\widetilde X}+\widetilde \Delta\sim_{\Q,f\circ \mu}0$.

Let $W\subset Y$ be a union of $f$-images of log canonical centers of $(X,\Delta)$. Then we have a proper subvariety $W_1\subset W$ which is the union of $f$-images of log canonical centers, not mapping surjectively onto an irreducible component of $W$. Notice that $W_1=\emptyset$ if every $f$-image of a log canonical center is not properly contained in $W$ (i.e. $W$ is minimal).

The pair $(W,W_1)$ is log rational by Theorem \ref{thm: lc admits log rational stratification}, and inductively, we obtain a sequence of subvarieties
$$
\emptyset=W_{k+1}\subsetneq W_{k}\subsetneq \dots\subsetneq W_1\subsetneq W_0=W
$$
such that $(W_{i},W_{i+1})$ is a log rational pair for all $i\in [0,k]$. Therefore, if $W$ is minimal, then $W$ has rational singularities. In general, we inductively conclude that $W$ has Du Bois singularities, using the fact that $W_i$ has Du Bois singularities if $(W_i,W_{i+1})$ is a Du Bois pair and $W_{i+1}$ has Du Bois singularities (see \cite{Kovacs11}*{Proposition 5.1}).
\end{proof}

We emphasize the following variant of the fact that log canonical singularities are Du Bois, which is sometimes more useful in practice; see e.g. the proof of Corollary \ref{cor: log canonical model is Du Bois} below.

\begin{cor}
\label{cor: pushforward pluricanonical is Du Bois}
Let $(X,D)$ be a log canonical pair (resp. klt pair) with a boundary $\Q$-divisor $D$ and $f:X\to Y$ be a proper morphism between normal varieties, with connected fibers. Suppose $f_*\O_X(N(K_X+D))$ is a line bundle for some positive integer $N$ such that $ND$ is an integral divisor. Then $Y$ has Du Bois singularities (resp. rational singularities).

Furthermore, there exists an increasing sequence of subvarieties
$$
\emptyset=Y_{k+1}\subsetneq Y_{k}\subsetneq \dots\subsetneq Y_1\subsetneq Y_0=Y
$$
such that each $Y_i$ is a union of $f$-images of log canonical centers of $(X,D)$, and $(Y_i,Y_{i+1})$ is a log rational pair for all $i\in [0,k]$.
\end{cor}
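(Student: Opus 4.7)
The plan is to reduce the corollary to Theorem \ref{thm: lc implies DB} by constructing a suitable sub-boundary $\Q$-divisor $\Delta \le D$ on $X$. Let $L := f_*\O_X(N(K_X+D))$ (a line bundle by hypothesis) and consider the counit of adjunction $f^*L \to \O_X(N(K_X+D))$; this is an injective morphism of rank-one reflexive sheaves, as it is nonzero at the generic point of $X$. Its cokernel is cut out by an effective Weil divisor $E$ on $X$, giving $N(K_X+D) \sim E + f^*L$. Setting $\Delta := D - E/N$, I then have $K_X + \Delta \sim_{\Q, f} 0$; moreover since $\Delta \le D$, the pair $(X,\Delta)$ is log canonical, and in the klt case it is even sub-klt, as all coefficients of $\Delta$ are strictly less than $1$.

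The next step is to verify the key hypothesis $f_*\O_X(\lceil -\Delta^{<1}\rceil) = \O_Y$ of Theorem \ref{thm: lc implies DB}. The projection formula combined with $f_*\O_X = \O_Y$ (which holds via Stein factorization, using that $f$ is proper with connected fibers and $Y$ is normal) gives
$$
f_*\O_X(E) \isom f_*\O_X(N(K_X+D)) \tensor L^{-1} \isom \O_Y.
$$
I then claim that $\lceil -\Delta^{<1}\rceil \le E$ as Weil divisors, from which it follows that $\O_Y \subseteq f_*\O_X(\lceil -\Delta^{<1}\rceil) \subseteq f_*\O_X(E) = \O_Y$, yielding the desired equality. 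The main obstacle lies in this coefficient check: rounding does not commute with subtraction, so one must case-split per prime divisor $C$ depending on whether $C \subseteq \Supp(E)$ and whether $\mathrm{coeff}_C(D)$ equals $1$; in each case the inequality follows cleanly from $\mathrm{coeff}_C(D) \in [0,1]$ and $\mathrm{coeff}_C(E) \in \Z_{\ge 0}$.

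Finally, Theorem \ref{thm: lc implies DB} applied to $(X,\Delta)$ and $f$ yields that $Y$ has Du Bois singularities. In the klt case, $(X,\Delta)$ is sub-klt, so its only log canonical center is $X$ itself; then $Y = f(X)$ is the minimal union of $f$-images of log canonical centers, and the minimality clause of Theorem \ref{thm: lc implies DB} upgrades the conclusion to rational singularities. The stratification is produced by the inductive procedure from the end of the proof of Theorem \ref{thm: lc implies DB}: set $Y_0 := Y$ and, having defined $Y_i$, let $Y_{i+1}$ be the union of $f$-images of log canonical centers of $(X,\Delta)$ contained in $f^{-1}(Y_i)$ that do not dominate any irreducible component of $Y_i$. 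Theorem \ref{thm: lc admits log rational stratification} shows that each $(Y_i, Y_{i+1})$ is a log rational pair, and since $\Delta \le D$ forces every log canonical center of $(X,\Delta)$ to be one of $(X,D)$ as well, each $Y_i$ is automatically a union of $f$-images of log canonical centers of $(X,D)$.
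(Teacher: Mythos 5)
Your proposal is correct and follows essentially the same line of argument as the paper's proof: construct $\Delta = D - E/N$ from the divisor $E$ of zeros of the adjoint morphism $f^*L \to \O_X(N(K_X+D))$, verify the hypotheses of Theorem~\ref{thm: lc implies DB} (in particular $K_X+\Delta \sim_{\Q,f} 0$ and $f_*\O_X(\lceil -\Delta^{<1}\rceil)=\O_Y$), and apply that theorem together with Theorem~\ref{thm: lc admits log rational stratification} and the inclusion of log canonical centers of $(X,\Delta)$ into those of $(X,D)$. The only real difference is cosmetic: you spell out the coefficientwise inequality $\lceil -\Delta^{<1}\rceil \le E$ and the sandwich $\O_Y \subseteq f_*\O_X(\lceil -\Delta^{<1}\rceil) \subseteq f_*\O_X(E) = \O_Y$, whereas the paper simply asserts the direct image identity is immediate from the projection-formula isomorphism $f_*\O_X(A)\isom\O_Y$; your more explicit check is harmless and, if anything, a useful clarification.
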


Here, $\O_X(N(K_X+D))$ is a reflexive sheaf associated to the integral Weil divisor $N(K_X+D)$.

\begin{proof}
Let a line bundle $L\isom f_*\O_X(N(K_X+D))$. By adjointness, we have a morphism
$$
f^*L\to \O_X(N(K_X+D))
$$
which induces an effective integral divisor $A$ such that
$$
f^*L\isom \O_X(N(K_X+D)-A).
$$
By the projection formula, we have a natural isomorphism $L\isom L\tensor f_*\O_X(A)$. Therefore, the natural morphism $\O_Y\to f_*\O_X(A)$ is an isomorphism.

Denote $\Delta:=D-\frac{1}{N}A$. Since $N(K_X+D)-A$ is a Cartier divisor, $K_X+\Delta$ is a $\Q$-Cartier divisor. Consequently, $(X,\Delta)$ is a log canonical pair with a sub-boundary $\Q$-divisor $\Delta$. Additionally, $K_X+\Delta\sim_{\Q,f}0$ and $f_*\O_X(\lceil\frac{1}{N}A\rceil)=\O_Y$ are immediate. Then the proof of Theorem \ref{thm: lc implies DB} works verbatim.

From the fact that the discrepancy of a divisor of $(X,\Delta)$ is at least the discrepancy of that of $(X,D)$, the log canonical centers of $(X,\Delta)$ are the log canonical centers of $(X,D)$. Therefore, each $Y_i$ is a union of $f$-images of log canonical centers of $(X,\Delta)$, hence that of $(X,D)$.
\end{proof}

We end this section with a proof that the log canonical model of a log canonical pair is Du Bois.

\begin{proof}[Proof of Corollary \ref{cor: log canonical model is Du Bois}]
Since the log canonical ring is finitely generated, there exists a sufficiently large and divisible $N$ such that
$$
R^{(N)}(X,D)=\bigoplus_{m\ge 0} H^0(X,Nm(K_X+D))
$$
is generated by $H^0(X,N(K_X+D))$. Then the complete linear series $|N(K_X+D)|$ induces the rational map
$$
X\dashrightarrow \mathrm{Proj}\; R(X,D)
$$
with a very ample divisor $H$ on $\mathrm{Proj}\; R(X,D)$. We resolve the indeterminacy of this rational map via a resolution of singularities $\mu:\widetilde X\to X$. Let $K_{\widetilde X}+\widetilde \Delta=\mu^*(K_X+D)$ and take $\widetilde D:=\widetilde\Delta^{\ge 0}$ (i.e. the effective part of $\widetilde \Delta$). Therefore we have a natural isomorphism
$$
\mu_*\O_{\widetilde X}(Nm(K_{\widetilde X}+\widetilde D))=\O_X(Nm(K_X+D))
$$
for all $m\ge 1$, so that $R^{(N)}(\widetilde X,\widetilde D)=R^{(N)}(X,D)$.

From the construction, the morphism
$$
f:\widetilde X\to \mathrm{Proj}\; R(X,D)
$$
is induced by the complete linear series $|N(K_{\widetilde X}+\widetilde D)|$, and thus, there exists a natural morphism
$$
f^*H\to \O_{\widetilde X}(N(K_{\widetilde X}+\widetilde D)).
$$
By adjointness, we have a short exact sequence
$$
0\to H\to f_*\O_{\widetilde X}(N(K_{\widetilde X}+\widetilde D))\to Q\to 0.
$$
Twist by $H^{\tensor N'}$ for large enough $N'$:
\begin{equation}
\label{eqn: SES for log canonical model}
0\to H^{\tensor N'+1}\to f_*\left(f^*H^{\tensor N'}\tensor\O_{\widetilde X}(N(K_{\widetilde X}+\widetilde D))\right)\to Q\tensor H^{\tensor N'}\to 0.
\end{equation}
Notice that $H^0(\mathrm{Proj}\; R(X,D), H^{\tensor N'+1})=H^0(\widetilde X, \O_{\widetilde X}(N(N'+1)(K_{\widetilde X}+\widetilde D)))$ which factors through $H^0(\widetilde X, f^*H^{\tensor N'}\tensor\O_{\widetilde X}(N(K_{\widetilde X}+\widetilde D)))$. Therefore from the long exact sequence of sheaf cohomologies of \eqref{eqn: SES for log canonical model}, we conclude
$$
H^0(\mathrm{Proj}\; R(X,D), Q\tensor H^{\tensor N'})=0
$$
for large enough $N'$. Therefore, $Q=0$. This implies that 
$$
H\isom f_*\O_{\widetilde X}(N(K_{\widetilde X}+\widetilde D)).
$$
Thus, Corollary \ref{cor: pushforward pluricanonical is Du Bois} completes the proof; when $(\widetilde X,\widetilde D)$ is a log canonical pair (resp. klt pair), then $\mathrm{Proj}\; R(X,D)$ has Du Bois singularities (resp. rational singularities).
\end{proof}

\begin{bibdiv}
    \begin{biblist}
    
\bib{Ambro03}{article}{
   author={Ambro, F.},
   title={Quasi-log varieties},
   journal={Tr. Mat. Inst. Steklova},
   volume={240},
   date={2003},
   pages={220--239},
   issn={0371-9685},
   translation={
      journal={Proc. Steklov Inst. Math.},
      date={2003},
      number={1(240)},
      pages={214--233},
      issn={0081-5438},
   },
   review={\MR{1993751}},
}

\bib{Ambro11}{article}{
   author={Ambro, Florin},
   title={Basic properties of log canonical centers},
   conference={
      title={Classification of algebraic varieties},
   },
   book={
      series={EMS Ser. Congr. Rep.},
      publisher={Eur. Math. Soc., Z\"{u}rich},
   },
   isbn={978-3-03719-007-4},
   date={2011},
   pages={39--48},
   review={\MR{2779466}},
   doi={10.4171/007-1/2},
}

\bib{Beilinson87}{article}{
   author={Be\u{\i}linson, A. A.},
   title={On the derived category of perverse sheaves},
   conference={
      title={$K$-theory, arithmetic and geometry},
      address={Moscow},
      date={1984--1986},
   },
   book={
      series={Lecture Notes in Math.},
      volume={1289},
      publisher={Springer, Berlin},
   },
   isbn={3-540-18571-2},
   date={1987},
   pages={27--41},
   review={\MR{0923133}},
   doi={10.1007/BFb0078365},
}

\bib{BBD}{article}{
   author={Be\u{\i}linson, A. A.},
   author={Bernstein, J.},
   author={Deligne, P.},
   title={Faisceaux pervers},
   language={French},
   conference={
      title={Analysis and topology on singular spaces, I},
      address={Luminy},
      date={1981},
   },
   book={
      series={Ast\'{e}risque},
      volume={100},
      publisher={Soc. Math. France, Paris},
   },
   date={1982},
   pages={5--171},
   review={\MR{751966}},
}    

\bib{Deligne74}{article}{
   author={Deligne, Pierre},
   title={Th\'{e}orie de Hodge. III},
   language={French},
   journal={Inst. Hautes \'{E}tudes Sci. Publ. Math.},
   number={44},
   date={1974},
   pages={5--77},
   issn={0073-8301},
   review={\MR{498552}},
}

\bib{DuBois81}{article}{
   author={Du Bois, Philippe},
   title={Complexe de de Rham filtr\'{e} d'une vari\'{e}t\'{e} singuli\`ere},
   language={French},
   journal={Bull. Soc. Math. France},
   volume={109},
   date={1981},
   number={1},
   pages={41--81},
   issn={0037-9484},
   review={\MR{613848}},
}

\bib{Elkik81}{article}{
   author={Elkik, Ren\'{e}e},
   title={Rationalit\'{e} des singularit\'{e}s canoniques},
   language={French},
   journal={Invent. Math.},
   volume={64},
   date={1981},
   number={1},
   pages={1--6},
   issn={0020-9910},
   review={\MR{0621766}},
   doi={10.1007/BF01393930},
}

\bib{EV92}{book}{
   author={Esnault, H\'{e}l\`ene},
   author={Viehweg, Eckart},
   title={Lectures on vanishing theorems},
   series={DMV Seminar},
   volume={20},
   publisher={Birkh\"{a}user Verlag, Basel},
   date={1992},
   pages={vi+164},
   isbn={3-7643-2822-3},
   review={\MR{1193913}},
   doi={10.1007/978-3-0348-8600-0},
}

\bib{Flenner88}{article}{
   author={Flenner, Hubert},
   title={Extendability of differential forms on nonisolated singularities},
   journal={Invent. Math.},
   volume={94},
   date={1988},
   number={2},
   pages={317--326},
   issn={0020-9910},
   review={\MR{0958835}},
   doi={10.1007/BF01394328},
}

\bib{Fujino99}{article}{
   author={Fujino, Osamu},
   title={Applications of Kawamata's positivity theorem},
   journal={Proc. Japan Acad. Ser. A Math. Sci.},
   volume={75},
   date={1999},
   number={6},
   pages={75--79},
   issn={0386-2194},
   review={\MR{1712648}},
}

\bib{Fujino11}{article}{
   author={Fujino, Osamu},
   title={Fundamental theorems for the log minimal model program},
   journal={Publ. Res. Inst. Math. Sci.},
   volume={47},
   date={2011},
   number={3},
   pages={727--789},
   issn={0034-5318},
   review={\MR{2832805}},
   doi={10.2977/PRIMS/50},
}

\bib{Fujino15}{article}{
   author={Fujino, Osamu},
   title={Some remarks on the minimal model program for log canonical pairs},
   journal={J. Math. Sci. Univ. Tokyo},
   volume={22},
   date={2015},
   number={1},
   pages={149--192},
   issn={1340-5705},
   review={\MR{3329193}},
}

\bib{Fujino22}{article}{
   author={Fujino, Osamu},
   title={Fundamental properties of basic slc-trivial fibrations I},
   journal={Publ. Res. Inst. Math. Sci.},
   volume={58},
   date={2022},
   number={3},
   pages={473--526},
   issn={0034-5318},
   review={\MR{4458545}},
   doi={10.4171/prims/58-3-2},
}

\bib{FL22}{article}{
   author={Fujino, Osamu},
   author={Liu, Haidong},
   title={Quasi-log canonical pairs are Du Bois},
   journal={J. Algebraic Geom.},
   volume={31},
   date={2022},
   number={1},
   pages={105--112},
   issn={1056-3911},
   review={\MR{4372408}},
}

\bib{FM00}{article}{
   author={Fujino, Osamu},
   author={Mori, Shigefumi},
   title={A canonical bundle formula},
   journal={J. Differential Geom.},
   volume={56},
   date={2000},
   number={1},
   pages={167--188},
   issn={0022-040X},
   review={\MR{1863025}},
}

\bib{GK14}{article}{
   author={Graf, Patrick},
   author={Kov\'{a}cs, S\'{a}ndor J.},
   title={An optimal extension theorem for 1-forms and the Lipman-Zariski
   conjecture},
   journal={Doc. Math.},
   volume={19},
   date={2014},
   pages={815--830},
   issn={1431-0635},
   review={\MR{3247804}},
}

\bib{GK14b}{article}{
   author={Graf, Patrick},
   author={Kov\'{a}cs, S\'{a}ndor J.},
   title={Potentially Du Bois spaces},
   journal={J. Singul.},
   volume={8},
   date={2014},
   pages={117--134},
   review={\MR{3395242}},
   doi={10.5427/jsing.2014.8i},
}

\bib{GKKP11}{article}{
   author={Greb, Daniel},
   author={Kebekus, Stefan},
   author={Kov\'{a}cs, S\'{a}ndor J.},
   author={Peternell, Thomas},
   title={Differential forms on log canonical spaces},
   journal={Publ. Math. Inst. Hautes \'{E}tudes Sci.},
   number={114},
   date={2011},
   pages={87--169},   
   issn={0073-8301},
   review={\MR{2854859}},
   doi={10.1007/s10240-011-0036-0},
}

\bib{HTT}{book}{
   author={Hotta, Ryoshi},
   author={Takeuchi, Kiyoshi},
   author={Tanisaki, Toshiyuki},
   title={$D$-modules, perverse sheaves, and representation theory},
   series={Progress in Mathematics},
   volume={236},
   edition={Japanese edition},
   publisher={Birkh\"{a}user Boston, Inc., Boston, MA},
   date={2008},
   pages={xii+407},
   isbn={978-0-8176-4363-8},
   review={\MR{2357361}},
   doi={10.1007/978-0-8176-4523-6},
}

\bib{HL10}{book}{
   author={Huybrechts, Daniel},
   author={Lehn, Manfred},
   title={The geometry of moduli spaces of sheaves},
   series={Cambridge Mathematical Library},
   edition={2},
   publisher={Cambridge University Press, Cambridge},
   date={2010},
   pages={xviii+325},
   isbn={978-0-521-13420-0},
   review={\MR{2665168}},
   doi={10.1017/CBO9780511711985},
}

\bib{MSS17}{article}{
   author={Ma, Linquan},
   author={Schwede, Karl},
   author={Shimomoto, Kazuma},
   title={Local cohomology of Du Bois singularities and applications to
   families},
   journal={Compos. Math.},
   volume={153},
   date={2017},
   number={10},
   pages={2147--2170},
   issn={0010-437X},
   review={\MR{3705286}},
   doi={10.1112/S0010437X17007321},
}

\bib{KS21}{article}{
   author={Kebekus, Stefan},
   author={Schnell, Christian},
   title={Extending holomorphic forms from the regular locus of a complex
   space to a resolution of singularities},
   journal={J. Amer. Math. Soc.},
   volume={34},
   date={2021},
   number={2},
   pages={315--368},
   issn={0894-0347},
   review={\MR{4280862}},
   doi={10.1090/jams/962},
}

\bib{Kollar86}{article}{
   author={Koll\'{a}r, J\'{a}nos},
   title={Higher direct images of dualizing sheaves. I},
   journal={Ann. of Math. (2)},
   volume={123},
   date={1986},
   number={1},
   pages={11--42},
   issn={0003-486X},
   review={\MR{0825838}},
   doi={10.2307/1971351},
}

\bib{Kollar13}{book}{
   author={Koll\'{a}r, J\'{a}nos},
   title={Singularities of the minimal model program},
   series={Cambridge Tracts in Mathematics},
   volume={200},
   note={With a collaboration of S\'{a}ndor Kov\'{a}cs},
   publisher={Cambridge University Press, Cambridge},
   date={2013},
   pages={x+370},
   isbn={978-1-107-03534-8},
   review={\MR{3057950}},
   doi={10.1017/CBO9781139547895},
}

\bib{KK10}{article}{
   author={Koll\'{a}r, J\'{a}nos},
   author={Kov\'{a}cs, S\'{a}ndor J.},
   title={Log canonical singularities are Du Bois},
   journal={J. Amer. Math. Soc.},
   volume={23},
   date={2010},
   number={3},
   pages={791--813},
   issn={0894-0347},
   review={\MR{2629988}},
   doi={10.1090/S0894-0347-10-00663-6},
}

\bib{Kovacs99}{article}{
   author={Kov\'{a}cs, S\'{a}ndor J.},
   title={Rational, log canonical, Du Bois singularities: on the conjectures
   of Koll\'{a}r and Steenbrink},
   journal={Compositio Math.},
   volume={118},
   date={1999},
   number={2},
   pages={123--133},
   issn={0010-437X},
   review={\MR{1713307}},
   doi={10.1023/A:1001120909269},
}

\bib{Kovacs00b}{article}{
   author={Kov\'{a}cs, S\'{a}ndor},
   title={Rational, log canonical, Du Bois singularities. II. Kodaira
   vanishing and small deformations},
   journal={Compositio Math.},
   volume={121},
   date={2000},
   number={3},
   pages={297--304},
   issn={0010-437X},
   review={\MR{1761628}},
   doi={10.1023/A:1001830707422},
}

\bib{Kovacs00}{article}{
   author={Kov\'{a}cs, S\'{a}ndor J.},
   title={A characterization of rational singularities},
   journal={Duke Math. J.},
   volume={102},
   date={2000},
   number={2},
   pages={187--191},
   issn={0012-7094},
   review={\MR{1749436}},
   doi={10.1215/S0012-7094-00-10221-9},
}

\bib{Kovacs11}{article}{
   author={Kov\'{a}cs, S\'{a}ndor J.},
   title={Du Bois pairs and vanishing theorems},
   journal={Kyoto J. Math.},
   volume={51},
   date={2011},
   number={1},
   pages={47--69},
   issn={2156-2261},
   review={\MR{2784747}},
   doi={10.1215/0023608X-2010-020},
}

\bib{KS11b}{article}{
   author={Kov\'{a}cs, S\'{a}ndor J.},
   author={Schwede, Karl},
   title={Du Bois singularities deform},
   conference={
      title={Minimal models and extremal rays},
      address={Kyoto},
      date={2011},
   },
   book={
      series={Adv. Stud. Pure Math.},
      volume={70},
      publisher={Math. Soc. Japan, [Tokyo]},
   },
   isbn={978-4-86497-036-5},
   date={2016},
   pages={49--65},
   review={\MR{3617778}},
   doi={10.2969/aspm/07010049},
}

\bib{KS11}{article}{
   author={Kov\'{a}cs, S\'{a}ndor J.},
   author={Schwede, Karl E.},
   title={Hodge theory meets the minimal model program: a survey of log
   canonical and Du Bois singularities},
   conference={
      title={Topology of stratified spaces},
   },
   book={
      series={Math. Sci. Res. Inst. Publ.},
      volume={58},
      publisher={Cambridge Univ. Press, Cambridge},
   },
   isbn={978-0-521-19167-8},
   date={2011},
   pages={51--94},
   review={\MR{2796408}},
}

\bib{KS16}{article}{
   author={Kov\'{a}cs, S\'{a}ndor J.},
   author={Schwede, Karl},
   title={Inversion of adjunction for rational and Du Bois pairs},
   journal={Algebra Number Theory},
   volume={10},
   date={2016},
   number={5},
   pages={969--1000},
   issn={1937-0652},
   review={\MR{3531359}},
   doi={10.2140/ant.2016.10.969},
}

\bib{Nakayama88}{article}{
   author={Nakayama, Noboru},
   title={The singularity of the canonical model of compact K\"{a}hler
   manifolds},
   journal={Math. Ann.},
   volume={280},
   date={1988},
   number={3},
   pages={509--512},
   issn={0025-5831},
   review={\MR{0936326}},
   doi={10.1007/BF01456340},
}

\bib{Saito88}{article}{
   author={Saito, Morihiko},
   title={Modules de Hodge polarisables},
   language={French},
   journal={Publ. Res. Inst. Math. Sci.},
   volume={24},
   date={1988},
   number={6},
   pages={849--995 (1989)},
   issn={0034-5318},
   review={\MR{1000123}},
   doi={10.2977/prims/1195173930},
}

\bib{Saito90}{article}{
   author={Saito, Morihiko},
   title={Mixed Hodge modules},
   journal={Publ. Res. Inst. Math. Sci.},
   volume={26},
   date={1990},
   number={2},
   pages={221--333},
   issn={0034-5318},
   review={\MR{1047415}},
   doi={10.2977/prims/1195171082},
}

\bib{Saito91}{article}{
   author={Saito, Morihiko},
   title={On Koll\'{a}r's conjecture},
   conference={
      title={Several complex variables and complex geometry, Part 2},
      address={Santa Cruz, CA},
      date={1989},
   },
   book={
      series={Proc. Sympos. Pure Math.},
      volume={52},
      publisher={Amer. Math. Soc., Providence, RI},
   },
   date={1991},
   pages={509--517},
   review={\MR{1128566}},
}
    
\bib{Saito00}{article}{
   author={Saito, Morihiko},
   title={Mixed Hodge complexes on algebraic varieties},
   journal={Math. Ann.},
   volume={316},
   date={2000},
   number={2},
   pages={283--331},
   issn={0025-5831},
   review={\MR{1741272}},
   doi={10.1007/s002080050014},
}

\bib{Schnell16}{article}{
   author={Schnell, Christian},
   title={On Saito's vanishing theorem},
   journal={Math. Res. Lett.},
   volume={23},
   date={2016},
   number={2},
   pages={499--527},
   issn={1073-2780},
   review={\MR{3512896}},
   doi={10.4310/MRL.2016.v23.n2.a10},
}

\bib{Schwede07}{article}{
   author={Schwede, Karl},
   title={A simple characterization of Du Bois singularities},
   journal={Compos. Math.},
   volume={143},
   date={2007},
   number={4},
   pages={813--828},
   issn={0010-437X},
   review={\MR{2339829}},
   doi={10.1112/S0010437X07003004},
}

\bib{SvS85}{article}{
   author={van Straten, D.},
   author={Steenbrink, J.},
   title={Extendability of holomorphic differential forms near isolated
   hypersurface singularities},
   journal={Abh. Math. Sem. Univ. Hamburg},
   volume={55},
   date={1985},
   pages={97--110},
   issn={0025-5858},
   review={\MR{0831521}},
   doi={10.1007/BF02941491},
}

\bib{Steenbrink85}{article}{
   author={Steenbrink, J. H. M.},
   title={Vanishing theorems on singular spaces},
   note={Differential systems and singularities (Luminy, 1983)},
   journal={Ast\'{e}risque},
   number={130},
   date={1985},
   pages={330--341},
   issn={0303-1179},
   review={\MR{0804061}},
}

\bib{Tighe23}{article}{
   author={Tighe, Benjamin},
   title={The Holomorphic Extension Property for $k$-du Bois Singularities},
   journal={preprint arXiv:2312.01245},
   date={2023},
}

    \end{biblist}
\end{bibdiv}

\end{document}